\documentclass[hidelinks,onefignum,onetabnum]{siamart251216}

\usepackage{amssymb,mathtools,mathrsfs,booktabs,color}
\usepackage{multirow,multicol}

\usepackage{enumitem}
\usepackage{graphicx,subcaption}  % 支持子图
\usepackage{array}
\usepackage{algorithm}
\usepackage{algpseudocode}

\newcommand{\brab}[1]{\bigl(#1\bigr)}
\newcommand{\braB}[1]{\Bigl(#1\Bigr)}
\newcommand{\kbrab}[1]{\bigl[#1\bigr]}
\newcommand{\kbraB}[1]{\Bigl[#1\Bigr]}

\allowdisplaybreaks[4]

\title{Energy Dissipation Analysis of Implicit-Explicit Linear Multistep Methods for Gradient Flows Using General Multipliers% \thanks{Submitted to the editors {\blue DATE}.
\thanks{Submitted.
\funding{The work of C. Quan is supported by National Natural Science Foundation of China  (Grant No. 12271241), Guangdong Basic and Applied Basic Research Foundation (Grant No. 2023B1515020030), and Shenzhen Science and Technology Innovation Program (Grant No. JCYJ20230807092402004).  The work of X. Wang is supported by National Natural Science Foundation of China (Grant No. 12501551). The work of C. Xu is supported by National Natural Science Foundation of China (Grant No. 12371408).
}}
}

\author{Chaoyu Quan\thanks{School of Science and Engineering, The Chinese University of Hong Kong (Shenzhen), Shenzhen 518172, China; Shenzhen International Center for Industrial and Applied Mathematics, Shenzhen Research Institute of Big Data, Shenzhen, 518000, China 
  (\email{quanchaoyu@cuhk.edu.cn}).}
\and Huaijin Wang\thanks{School of Mathematical Sciences and Fujian Provincial Key Laboratory of Mathematical Modeling and High Performance Scientific Computing, Xiamen University, Xiamen 361005, China
  (\email{wanghuaijin@stu.xmu.edu.cn}).}
\and Xuping Wang\thanks{School of Science and Engineering, The Chinese University of Hong Kong (Shenzhen), Shenzhen 518172, China 
  (\email{wangxuping@cuhk.edu.cn}).}
\and Chuanju Xu\thanks{School of Mathematical Sciences and Fujian Provincial Key Laboratory of Mathematical Modeling and High Performance Scientific Computing, Xiamen University, Xiamen 361005, China
  (\email{cjxu@xmu.edu.cn}).}
}

\begin{document}

\sloppy
\maketitle
\begin{abstract}
A unified framework is proposed to establish the energy dissipation of implicit-explicit linear multistep methods (IMEX-LMMs) for gradient flows, based on general multipliers that are linear combinations of first-order differences of numerical solutions. A generalized Dahlquist's theory is developed to establish the energy dissipation of IMEX-LMMs. It is shown that given an IMEX-LMM, to find a multiplier ensuring the energy dissipation is relaxed to solve a linear programming that can be easily solved. Within this framework, two specific multipliers are discovered to establish the energy dissipation of the sixth-order IMEX backward differentiation formula (IMEX-BDF6) method and a seventh-order IMEX weighted and shifted BDF method, and a new eighth-order energy-dissipative IMEX-LMM is provided. To the best of our knowledge, these are the first energy-dissipation results for the IMEX-BDF6 method and the IMEX-LMMs of order higher than six. In addition, this framework can be used directly to establish the $L^2$- or $H^1$-stability of general LMMs for linear parabolic problems. Numerical experiments illustrate the temporal accuracy and energy dissipation of these methods.
\end{abstract}

\begin{keywords}
Linear multistep method, backward differentiation formula, implicit-explicit method, gradient flow, energy dissipation
\end{keywords}

% REQUIRED
\begin{MSCcodes}
35K35, 35K55, 65M06, 65M12
\end{MSCcodes}
% 35K35  	Initial-boundary value problems for higher-order parabolic equations
% 35K55  	Nonlinear parabolic equations
% 65L06  	Multistep, Runge-Kutta and extrapolation methods for ordinary differential equations
% 65M06  	Finite difference methods for initial value and initial-boundary value problems involving PDEs
% 65M12  	Stability and convergence of numerical methods for initial value and initial-boundary value problems involving PDEs
% 65M22  	Numerical solution of discretized equations for initial value and initial-boundary value problems involving PDEs
% 65Q10  	Numerical methods for difference equations
\section{Introduction}

Gradient flows arise from many models in phase-field theory and materials science.
Consider the following gradient flow subject to periodic boundary conditions:
\begin{equation}\label{model:phase-field-eq}
    u_{t} = \mathcal{M} \kbrab{\mathcal{L}u + f(u)}, \quad u(0,\boldsymbol{x}) = u^{0}(\boldsymbol{x}) \quad \text{for } (\boldsymbol{x},t) \in \Omega\times (0,T],
\end{equation}
where $\Omega=(-\pi, \pi)^{d} \subset \mathbb{R}^{d}$ $(d = 1,2,3)$ is the domain, $T > 0$ is the final time, $u^0$ is the initial condition, $\mathcal{M}$ is a self-adjoint, linear, negative definite, and invertible operator, $\mathcal{L}$ is 
a self-adjoint, linear, and positive semi-definite operator, and $f(u)$ is a nonlinear term.
Assume that $f(u) = F'(u)$ for some potential $F$ satisfying $\int_\Omega F(u)\mathrm{d}\boldsymbol{x} \geq 0$.
Then, \eqref{model:phase-field-eq} can also be written as
\begin{equation}\label{eq:gradient-flow}
    u_{t} = \mathcal{M} \frac{\delta E}{\delta u} \quad\text{with}\quad E[u] \coloneqq \frac{1}{2} (u, \mathcal{L} u) + \int_{\Omega} F(u) \mathrm{d} \boldsymbol{x},
\end{equation}
where $(\cdot, \cdot)$ denotes the $L^{2}$ inner product defined by $(v, w) \coloneqq \int_{\Omega} v(\boldsymbol{x}) w(\boldsymbol{x}) \mathrm{d} \boldsymbol{x}$, and $\|\cdot\|$ denotes the induced $L^2$ norm.
It is well-known that the solution of a gradient flow problem satisfies the energy dissipation law
\begin{equation}\label{ieq:energy-dissipation-law}
    \frac{\mathrm{d}E}{\mathrm{d}t} = \braB{\frac{\delta E}{\delta u}, \frac{\partial u}{\partial t}} = \braB{\mathcal{M}^{-1} \frac{\partial u}{\partial t}, \frac{\partial u}{\partial t}} \leq 0,
\end{equation}
which has become a crucial criterion for designing numerical methods for gradient flows in the past decade.

Linear multistep methods (LMMs) have been widely used for time discretization of parabolic problems.
By Dahlquist's G-stability theory \cite{Dahlquist-1963-order-barrier, Dahlquist-1978-G-stability}, the order of A-stable LMMs cannot exceed two.
For higher-order LMMs, the Nevanlinna--Odeh multiplier technique \cite{1981-NO-multiplier} has been used to analyze their stability.
In particular, the stability of implicit or implicit-explicit backward differentiation formula (IMEX-BDF) methods have been well-studied up to the fifth order in \cite{2013-NO-Lubich, 2015NM-NO-AkrivisLubich,Akrivis2016BDF345, Akrivis-2018-IMA}.
For the sixth-order BDF (BDF6) method, due to the nonexistence of Nevanlinna--Odeh multiplier, a special energy technique has been introduced to establish its stability  \cite{2021SINUM-NO-Akrivis-BDF6}.
More recently, a seventh-order weighted and shifted BDF (WSBDF7) method was constructed and proved to be stable \cite{2025IMA-NO-Akrivis-BDF7}.
So far, IMEX-BDF methods have been used to solve the advection-diffusion equation \cite{AscherRuuthWetton1995}, the Navier--Stokes equation \cite{2017IMEX,2019IMEX,HuangShen-2025MCOM} and other nonlinear parabolic equations \cite{2015SINUM-NO-Akrivis,Akrivis2016,Huang-Shen-2024-SINUM,LiaoQuanTangZhou2026SemiGenerating}.

In the context of gradient flows, a fundamental concern is the energy dissipation property of numerical schemes.
Note that the energy dissipation law at the continuous level is followed by testing \eqref{model:phase-field-eq} with $u_t$. For IMEX-LMMs, a discrete analog is to test them with a simple multiplier, namely, the first-order backward difference $\delta u^{n+1}= u^{n+1}-u^n$. This multiplier yields the construction of dissipative energy up to the fifth order \cite{ChenShen1998, XuTang2006SINUM, hao2020third,LiaoKang-2024IMA-PFC}, but fails for the IMEX-BDF6 method \cite{LiaoTangZhou-2022CSIAM,QuanWangWangXuPre} despite the fact that it is stable in numerical implementations.
We mention that the energy boundedness of BDF6 can be derived using the energy technique developed in \cite{2021SINUM-NO-Akrivis-BDF6}, but not the energy dissipation. Recently, we have proved in \cite{QuanWangWangXuPre} that when testing with the simple multiplier, sixth-order energy-dissipative IMEX-LMMs can be constructed, whereas no seventh- or higher order energy-dissipative methods exist, i.e., the sixth-order barier.

In this work, instead of the simpler multiplier, we test IMEX-LMMs with the following general multiplier to establish the energy dissipation:
\begin{equation}\label{eq:multiplier1}
    \sum_{i=0}^{k-1} \mu_i \delta u^{n+1-i} \quad\text{with}\quad \sum_{i=0}^{k-1}\mu_i = 1.
\end{equation}
Since the nonlinearity of \eqref{model:phase-field-eq} is explicitly treated in IMEX-LMMs, 
a nonnegative quadratic term is required to control this explicit treatment.
As a consequence, Dahlquist's theory shall be generalized.
The contributions of this work are summarized as follows.
\begin{enumerate}
    \item[(1)] 
    Given two coprime real polynomials \(\rho(z) = \rho_0 + \rho_1 z+\ldots + \rho_r z^r\) and \( \sigma(z) = \sigma_0 + \sigma_1 z + \ldots + \sigma_r z^r\) of degree $r$, a prescribed real polynomial $\lambda(z) = \lambda_0 + \lambda_1 z + \ldots + \lambda_r z^r$, and a constant $\gamma\ge 0$, the following inequality:
    \[
        \mathrm{Re}\{\rho(z)\sigma(\bar z)\} \geq \gamma|\lambda(z)|^2 \quad\text{for any}\ z\in\mathbb{C},\ |z|\geq 1,
    \]
    is proved to be equivalent to the existence of the following quadratic decomposition:
    \begin{align}
        & 2 \biggl( \sum_{i=0}^r \rho_i v^{n+i},
            \sum_{i=0}^r \sigma_i v^{n+i} \biggr) - 2 \gamma \biggl\|\sum_{i=0}^{r} \lambda_i v^{n+i}\biggr\|^2
             \label{eq:generalized-dbc-decomp-delta-v}
        \\
         = &\; \sum_{i,j=0}^{r-1} g_{ij} (v^{n+1+i},v^{n+1+j})
        - \sum_{i,j=0}^{r-1} g_{ij} (v^{n+i},v^{n+j})
        + \biggl\|\sum_{i=0}^{r} q_i v^{n+i}\biggr\|^2 \notag
    \end{align}
    for any real sequence $\{v^i\}$, with some real positive definite matrix $G=(g_{ij})_{i,j=0}^{r-1}$ and real coefficients $q_i$. See \cref{thm:equivalence} for other equivalent statements.
    The classical Dahlquist's theory \cite{Dahlquist-1978-G-stability, BaiocchiCrouzeix1989} is consequently recovered when taking $\gamma=0$.
    
    \item[(2)] Based on the above generalized Dahlquist's theory, it is proved in \Cref{thm:main} that the energy dissipation of an IMEX-LMM for gradient flows holds if the scheme coefficients and the multiplier satisfy degree conditions, coprimality conditions, and two unit-circle positivity inequalities involving the generating polynomials of the scheme and the multiplier, and the Schur stability of the multiplier polynomial.
    These conditions form a feasibility problem \eqref{FP} over the coefficients of the scheme and the multiplier, which can be relaxed after discretization to be an easy-to-solve linear program (LP). 

    \item[(3)] Within this framework, two specific multipliers (see Proposition \ref{prop:bdf6-feasible} and \ref{prop:wsbdf7-feasible}) are discovered to establish the energy dissipation of IMEX-BDF6 and IMEX-WSBDF7 methods, and a new energy-dissipative IMEX-LMM8 is constructed.
    To the best of our knowledge, these are the first energy-dissipation results for the IMEX-BDF6 method and the IMEX-LMMs of order higher than six.
    In addition, this framework can be used directly to obtain the $L^2$- or $H^1$-stability of LMMs for linear parabolic equations (see Remark \ref{rem:linear-parabolic} for the derivation of $H^1$-stability of BDF6 method).
\end{enumerate}

The remainder of the paper is organized as follows. \Cref{sec:preliminaries} introduces the abstract setting and the formulation of IMEX-LMMs. \Cref{sec:framework} proposes a unified framework based on the general multiplier \eqref{eq:multiplier1}, establishes a generalized Dahlquist's theory, and proves modified energy dissipation together with the nonnegativity and consistency of the associated modified energy.
\Cref{sec:feasibility} formulates the feasibility problems to find multipliers to establish the modified energy dissipation of IMEX-BDF6 and IMEX-WSBDF7 and to construct the energy-dissipative IMEX-LMM8. \Cref{sec:numerics} presents numerical experiments to illustrate the temporal accuracy and modified energy dissipation of the LMMs, and \Cref{sec:conclusion} provides some concluding remarks.

\section{Preliminaries}\label{sec:preliminaries}
This section collects the preliminaries required for the subsequent analysis; see, e.g., \cite{brezis2011functional,book-solving-ODE2}.

\subsection{Abstract settings}
We introduce the abstract setting used throughout this work.
Let $H$ be a Hilbert space with inner product $(\cdot,\cdot)$ and induced norm $\|\cdot\|$. 
Assume $S$ and $V$ are Hilbert spaces such that $S \hookrightarrow V \hookrightarrow H$ with continuous and compact embeddings. Let $V^\prime$ be the dual of $V$ and denote by $\langle\cdot,\cdot\rangle_{V',V}$ the duality pairing. Identifying $H$ with its dual $H^\prime$, we have the Gelfand triple $V \subset H \subset V^\prime$.

Let $\mathcal{L}: S \to V$ be a linear, self-adjoint, and positive semi-definite operator:
\[
(\mathcal{L} u, v) = (u, \mathcal{L} v), \quad (\mathcal{L} v, v) \geq 0, \quad \forall u,v\in S.
\]
The associated semi-norm is defined by $\| \mathcal{L}^{1/2} u\| \coloneqq (\mathcal{L} u, u)^{1/2}$.
Assume that $\mathcal{M}:V\to V'$ is a linear self-adjoint operator:
\[
\langle \mathcal{M} u, v\rangle_{V',V} = \langle \mathcal{M} v, u\rangle_{V',V}, \quad \forall u, v\in V,
\]
and that the bilinear form $a_{\mathcal M}(u,v):=-\langle \mathcal M u,v\rangle_{V',V}$ 
is continuous and coercive; that is, there exist constants
$C_{\mathcal M},c_{\mathcal M}>0$ such that
\[
|\langle \mathcal{M}u,v\rangle_{V',V}|
\leq C_{\mathcal M}\|u\|_V\|v\|_V,
\quad
-\langle \mathcal{M}v,v\rangle_{V',V}
\geq c_{\mathcal M}\|v\|_V^2,
\quad \forall u, v\in V,
\]
where $\|\cdot\|_V$ denotes the norm in $V$.
Then, by the Lax--Milgram theorem, the inverse operator $\mathcal{M}^{-1}: V^\prime \to V$ 
is well-defined, linear, and bounded; moreover, 
it is self-adjoint and negative definite.
% \[
% \langle u,\mathcal{M}^{-1} v\rangle_{V^\prime,V} = \langle v,\mathcal{M}^{-1} u\rangle_{V^\prime,V}, \quad \langle v,\mathcal{M}^{-1} v\rangle_{V^\prime,V} < 0, \quad \forall u, v\in V^\prime \ \text{and} \ v\neq 0.
% \]
For $v \in H$, one can define the $(-\mathcal{M})^{-1}$-induced norm as $\|(-\mathcal{M})^{-1/2} v\| \coloneqq (v, -\mathcal{M}^{-1}v)^{1/2}$.

Assume that $f:S\to V$ is globally Lipschitz with constant $\ell_f>0$:
\begin{equation}
\label{eq:lip-f}
\|f(u)-f(v)\| \leq \ell_f \|u-v\|,\quad \forall u,v \in S.
\end{equation}
This implies that the potential function $F$ satisfies the following inequality:
\begin{equation}
\label{eq:lip-F}
|(F(u) - F(v), 1) - (f(v), u-v)| \leq \frac{\ell_f}{2} \| u-v \|^2,\quad \forall u,v\in S.
\end{equation}
\begin{remark}
The Lipschitz continuity assumption for the nonlinear term is widely used in the analysis of numerical methods for gradient flows. For nonlinearities that do not satisfy this condition globally, a common remedy in numerical simulations is to use truncation techniques; see, e.g. \cite{ShenYang-2010-Lipschitz,LiQiaoWang-2021MCOM-truncation,FuTangYang-2024IERK}.
\end{remark}
%A rigorous analysis of gradient flows without the global Lipschitz assumption will be pursued in our future work.

Furthermore, we assume that there exist two constants $\zeta > 0$ and $0 < \eta \leq 1$ such that the following inequality holds:
\begin{equation}
\label{eq:l2-control}
\|v\| \leq \zeta \| (-\mathcal{M})^{-1/2} v\|^\eta  \| \mathcal{L}^{1/2} v \|^{1-\eta},\quad \forall v\in S.
\end{equation}

We list several typical gradient flows with parameter $\varepsilon>0$ to illustrate the abstract setting:
% to better understand the above notations:
\begin{itemize}
    \item Allen--Cahn equation: $\mathcal{M}=-\mathcal{I}$, $\mathcal{L} = -\varepsilon^2 \Delta$,  $f(u) = u^3-u$, $H=L^2(\Omega)$, $V=L^2(\Omega)$, and $S = H^2(\Omega)$. Inequality \eqref{eq:l2-control} holds for $\zeta=1$ and $\eta=1$.
    \item Cahn--Hilliard equation: $\mathcal{M} = \Delta$, $\mathcal{L} = -\varepsilon^2 \Delta$, $f(u) = u^3-u$, $H=\dot{L}^2(\Omega)$, $V=\dot{H}^1(\Omega)$, and $S=\dot{H}^3(\Omega)$. Inequality \eqref{eq:l2-control} holds for $\zeta=\varepsilon^{-1/2}$ and $\eta=1/2$.
    \item Phase field crystal (PFC) equation: $\mathcal{M} = \Delta$, $\mathcal{L} = (\mathcal{I}+ \Delta)^2 + \varepsilon \mathcal{I}$, $f(u) = u^3-2\varepsilon u$, $H=\dot{L}^2(\Omega)$, $V=\dot{H}^1(\Omega)$, and $S=\dot{H}^5(\Omega)$. Inequality \eqref{eq:l2-control} holds for $\zeta= (2\sqrt{1+\varepsilon}-2)^{-1/4}$ and $\eta=1/2$.
\end{itemize}
Here, all Sobolev spaces are understood in the periodic sense, consistent with the periodic boundary conditions.
For a Hilbert space $X$, $\dot{X} = \{v\in X: \int_{\Omega} v \mathrm{d} \boldsymbol{x} = 0\}$ denotes the subspace of functions with zero mean, which is standard for gradient flows preserving mass.
% Here, the Sobolev spaces are actually periodic Sobolev spaces as the periodic boundary condition is equipped with, and for a Hilbert space $X$, $\dot{X} = \{v\in X: \int_{\Omega} v \mathrm{d} \boldsymbol{x} = 0\}$ denotes the subspace of functions with zero mean, which is a common requirement for mass-conserving gradient flows.

% If $\mathcal{L}$ or $\mathcal{M}$ are only PSD, definiteness can typically be recovered by restricting the operators to suitable subspaces (e.g., the zero-mean space in the Cahn--Hilliard equation) or by introducing appropriate stabilization techniques (e.g., in the phase field crystal equation).

\subsection{Formulation of IMEX-LMMs}
Let $t_n = n\tau$, $n=0,\ldots,N$, be a uniform partition of $[0,T]$ with step size $\tau = T/N$. Given initial approximations $u^i\in S$ for $0\le i\le k-1$,  the $k$-step IMEX-LMM for \eqref{model:phase-field-eq} reads
\begin{equation}\label{eq:model-lmm2}
    \sum_{i=0}^k A_i^{(k)} u^{n+1-i} = \tau \mathcal{M} \kbraB{\sum_{i=0}^k B_i^{(k)} \mathcal{L} u^{n+1-i} + \sum_{i=1}^k \hat{B}_i^{(k)} f(u^{n+1-i})}
\end{equation}
for $k-1\leq n\leq N-1$, where the coefficients satisfy the $k$th-order conditions:
\begin{equation}
\label{cond:order-lmm}
\left\{
\begin{aligned}
& \sum_{i=0}^k A_i^{(k)} = 0, \\
& \sum_{i=0}^k A_i^{(k)} (-i)^{m+1} = (m+1) \sum_{i=0}^k B_i^{(k)} (-i)^{m}, \quad 0\leq m\leq k-1,\\
& \sum_{i=0}^k A_i^{(k)} (-i)^{m+1} = (m+1) \sum_{i=1}^k \hat{B}_i^{(k)} (-i)^{m}, \quad 0\leq m\leq k-1.\\
\end{aligned}
\right .
\end{equation}
This IMEX-LMM treats the linear part $\mathcal L u$ implicitly and the nonlinear part $f(u)$ explicitly, avoiding nonlinear systems at each time step. 
To simplify the analysis, we further impose the following normalization condition:
\begin{equation}
\label{cond:normalization}
 \sum_{i=0}^k A_i^{(k)} (-i) =  \sum_{i=0}^k B_i^{(k)} = \sum_{i=1}^k \hat{B}_i^{(k)} = 1,
\end{equation}
which ensures that $\frac{1}{\tau} \sum_{i=0}^{k} A_i^{(k)} u(t_{n+1-i})$ consistently approximates $u_t(t_{n+1})$.
 Applying $(\tau \mathcal M)^{-1}$ to \eqref{eq:model-lmm2}, we obtain the equivalent reformulation
 \begin{equation}\label{scheme:lmm-diff-form}
\begin{aligned}
&\frac{1}{\tau} \sum_{i=0}^{k-1} a_{i}^{(k)} \mathcal{M}^{-1}\brab{\delta u^{n+1-i}}
- \sum_{i=0}^{k} B_{i}^{(k)} \mathcal{L}  u^{n+1-i} 
= f(u^n) + \sum_{i=1}^{k-1} \hat{b}_i^{(k)} \delta f(u^{n+1-i})
\end{aligned}
\end{equation}
for $k-1\leq n\leq N-1$, 
where the reformulated coefficients are given by
\begin{equation}\label{def:coefs-a-b-c}
    a_i^{(k)} = \sum_{j=0}^i A_j^{(k)} \ \text{for } 0\leq i\leq k-1 \quad\text{and}\quad \hat{b}_i^{(k)} = \sum_{j=1}^i \hat{B}_j^{(k)} - 1 \ \text{for }1\leq i\leq k-1.
\end{equation}
% \begin{equation}
% \label{def:coefs-a-b-c}
% \left\{
% \begin{aligned}
% & a_i^{(k)} = \sum_{j=0}^i A_j^{(k)},&& 0\leq i\leq k-1 ,\\
% & \hat{b}_i^{(k)} = \sum_{j=1}^i \hat{B}_j^{(k)} - 1 ,&& 1\leq i\leq k-1.
% \end{aligned}
% \right .
% \end{equation}
% For simplicity, we define the coefficient vectors:
For ease of notation, we introduce the coefficient vectors
\begin{equation}
\label{eq:coef-vecs}
\begin{aligned}
    \boldsymbol{A}^{(k)} &= [A_0^{(k)},\ldots,A_k^{(k)}]^\top,\;
    \boldsymbol{B}^{(k)}=[B_0^{(k)},\ldots,B_k^{(k)}]^\top,\;
    \hat{\boldsymbol{B}}^{(k)}=[\hat{B}_1^{(k)},\ldots,\hat{B}_k^{(k)}]^\top, \\
    \boldsymbol{a}^{(k)} &= [a_0^{(k)},\ldots,a_{k-1}^{(k)}]^\top,\  
    \hat{\boldsymbol{b}}^{(k)}=[\hat{b}_1^{(k)},\ldots,\hat{b}_{k-1}^{(k)},0]^\top.
\end{aligned}
\end{equation}

\section{Modified energy dissipation of IMEX-LMMs}\label{sec:framework}

This section develops a framework for preserving the modified energy dissipation of IMEX-LMMs.
% This section develops a framework for the modified energy dissipation preservation of IMEX-LMMs.
We start by introducing the notation used throughout this section
and defining the modified discrete energy $E_G^n$ in the form of \eqref{eq:EG}. We then
establish a generalized Dahlquist's theory in
\cref{subsec:general-baio-crou} to derive the positive-definiteness conditions for energy dissipation in \cref{subsec:properties-mod-energy}. We further show in \cref{subsec:properties-mod-energy} the consistency of $E_G^n$ with the original energy together with sufficient conditions for its nonnegativity. These results are then  combined in \cref{thm:main} to prove the modified energy dissipation of IMEX-LMMs.

We  rewrite \eqref{eq:multiplier1} as
\begin{equation}
\label{eq:multiplier2}
 \sum_{i=0}^{k-1} \mu_i \delta u^{n+1-i} = \sum_{i=0}^k \nu_i u^{n+1-i},
 \end{equation}
where $\nu_0=\mu_0$, $\nu_k=-\mu_{k-1}$, and $\nu_i=\mu_i-\mu_{i-1}$ for $1 \leq i \leq k-1$. 
We denote the corresponding coefficient vectors by
\begin{equation}
\label{eq:mu-nu}
\boldsymbol{\mu} = [\mu_0,\ldots,\mu_{k-1}]^\top\quad \text{and}\quad 
\boldsymbol{\nu} = [\nu_0,\ldots,\nu_{k}]^\top.
\end{equation}

For a given vector $\boldsymbol{s}=[s_0,\ldots,s_r]^\top$, we define the generating polynomial as
\begin{equation}\label{eq:genpoly}
M(z;\boldsymbol{s})\coloneqq \sum_{i=0}^r s_i z^{r-i}.
\end{equation}
Here, $\boldsymbol{s}$ can stand for  $\boldsymbol{a}^{(k)}$, $\boldsymbol{B}^{(k)}$, $\boldsymbol{\mu}$, or $\boldsymbol{\nu}$. 
By \eqref{eq:multiplier2}, we have $
M(z;\boldsymbol{\nu}) = (z-1)M(z;\boldsymbol{\mu})$.

Throughout this work, for two vectors of  functions $\boldsymbol{w} = [w^1,\ldots,w^m]^\top$ and $\boldsymbol{v}=[v^1,\ldots,v^m]^\top$, with $w^i,v^i\in H$, we define the inner product by
\[
(\boldsymbol{w},\boldsymbol{v})
\coloneqq \sum_{i=1}^m (w^i,v^i).
\]
More generally, for a matrix $G=(g_{ij})\in\mathbb R^{m\times m}$, we define the $G$-weighted bilinear form by
\[
(\boldsymbol{w}, \boldsymbol{v})_G\coloneqq \sum_{i,j=1}^m g_{ij}(w^i,v^j).
\]
When $G$ is positive definite, $(\cdot,\cdot)_G$ defines an inner
product on the product space. If $\mathscr{T}$ is a linear operator, typically $\mathcal{L}$ or
$\mathcal{M}^{-1}$, then $\mathscr{T}\boldsymbol v$ is understood componentwise, namely $\mathscr{T}\boldsymbol v
=[\mathscr{T}v^1,\ldots,\mathscr{T}v^m]^\top$.

In addition, we define the following modified energy:
\begin{equation}
\label{eq:EG}
E_G^n = 
 -\frac{1}{\tau} (\boldsymbol{v}_n,\mathcal{M}^{-1} \boldsymbol{v}_{n})_{G_a}
 + (\boldsymbol{u}_{n}, \mathcal{L} \boldsymbol{u}_{n})_{G_B}
     + \sum_{i=0}^{k-1} \mu_i (F(u^{n-i}) ,  1) 
 +  \frac{\ell_f}{2}   \sum_{i=1}^{k-1}  \hat{c}_i \| \delta u^{n+1-i}\|^2 
\end{equation}
for two given positive definite matrices $G_a \in \mathbb{R}^{(k-1)\times (k-1)}$ and $G_B \in \mathbb{R}^{k \times k}$. 
Here, 
\begin{equation}
\label{eq:eng-vn-un}
    \boldsymbol{v}_{n} = [\delta u^{n}, \delta u^{n-1}, \dots, \delta u^{n+2-k}]^\top,
    \quad
    \boldsymbol{u}_{n} = [ u^{n},  u^{n-1}, \dots,  u^{n+1-k}]^\top,
\end{equation}
$\ell_f$ is the Lipschitz constant in \eqref{eq:lip-f}, and $\hat{c}_i = \sum_{j=i}^{k-1} \tilde{c}_j$ for $0\leq i \leq k-1$, where
% the product of two vector functions is defined by $\boldsymbol{u}^\top \boldsymbol{v} = \sum_i (u^i,v^i)$, and $\hat{c}_i = \sum_{j=i}^{k-1} \tilde{c}_j$ for $0\leq i \leq k-1$, where
\begin{equation}
\label{eq:tilde-c}
\tilde{c}_i  = 
(1- \delta_{i,0}) | \hat{b}_i^{(k)}|  \sum_{j=0}^{k-1}   |\mu_j|   +  |\mu_i| \sum_{j=1}^{k-1} | \hat{b}_j^{(k)}| 
 + |\mu_i| + \Bigl ( |\mu_i| + \sum_{j=i}^{k-1} j|\mu_j| \Bigr ) (1-\delta_{i,0}).
\end{equation} 
Since $G_a$ and $G_B$ are positive definite, there exist constants $\lambda_a,\lambda_B>0$ such that
\begin{equation}
\label{eq:lambda-ab}
\boldsymbol{x}^\top G_a \boldsymbol{x} \geq \lambda_a \boldsymbol{x}^\top \boldsymbol{x},\quad
\boldsymbol{y}^\top G_B \boldsymbol{y} \geq \lambda_B \boldsymbol{y}^\top \boldsymbol{y},\quad
\forall \boldsymbol{x}\in\mathbb{R}^{k-1}, \ \boldsymbol{y}\in\mathbb{R}^{k}.
\end{equation}

The interpretation of $E_G^n$ as a modified energy will be justified by the results in \cref{subsec:properties-mod-energy}. For matrices $G_a$ and $G_B$ that meet the positive-definiteness conditions in \cref{lemma:multiplier-LMM-energy}, the dissipation property of $E_G^n$ follows. The non-negativity of $E_G^n$ requires the additional assumptions of \cref{lem:1}, while its consistency with the original energy follows from \cref{rem:consistency} using the property of $G_B$ stated in \cref{lem:1gnu1}.

% \textcolor{magenta}{The modified energy \eqref{eq:EG} is an $O(\tau)$ approximation of the original energy $E[u^n]$ in \eqref{eq:gradient-flow}; see \cref{lem:1gnu1,rem:consistency}. Under the additional assumptions of \cref{lem:1}, $E_G^n$ is uniformly nonnegative.}

\subsection{A generalized Dahlquist's theory}
\label{subsec:general-baio-crou}
In this subsection, we establish a generalized Dahlquist's theory; see classical Dahlquist's G-stability theory in \cite{Dahlquist-1978-G-stability, BaiocchiCrouzeix1989} or \cite[Section V.6]{book-solving-ODE2}.
% \textcolor{red}{In this subsection, we establish a generalized \textcolor{red}{Dahlquist} equivalence
% theorem, extending the classical algebraic result of Baiocchi and Crouzeix \cite{BaiocchiCrouzeix1989}; see also \cite{Dahlquist-1978-G-stability} and \cite[Section V.6]{book-solving-ODE2}.}
The extension consists of introducing the additional nonnegative term $\gamma|\lambda(z)|^2$, which is crucial for the construction of positive definite matrices in \cref{lemma:multiplier-LMM-energy} and therefore for the modified energy dissipation analysis of the IMEX-LMMs in \cref{thm:main}.

% Let
% \[
% \rho(z) = \rho_0 + \rho_1 z+\ldots + \rho_r z^r\quad  \text{and}\quad \sigma(z) = \sigma_0 + \sigma_1 z + \ldots + \sigma_r z^r
% \]
% be real polynomials of  degree $r$, where $r\ge 1$ and $\rho_r\sigma_r\neq 0$.
% Let $\lambda(z)=\lambda_0+\lambda_1z+\cdots+\lambda_r z^r$ be a nonzero real polynomial of degree at most $r$.
% For a polynomial $p(z)=\sum_{j=0}^r p_j z^j$ of degree $r$, we define its coefficient vector by $\boldsymbol p=[p_r,\ldots,p_0]^\top$ and denote its degree by $\deg p$.
% In particular, $\boldsymbol\rho$, $\boldsymbol\sigma$, and $\boldsymbol\lambda$ denote the
% coefficient vectors of $\rho$, $\sigma$, and $\lambda$, respectively.

In this subsection, for any polynomial
$\pi(z)=\sum_{j=0}^r \pi_j z^j$, we denote $\boldsymbol{\pi}=[\pi_r,\ldots,\pi_0]^\top$ 
as its coefficient vector. Henceforth, $\boldsymbol{0}_{m}$ denotes the zero vector in $\mathbb{R}^m$.
% $\boldsymbol{0}_{m\times n}$ denotes the zero matrix in $\mathbb{R}^{m\times n}$.
% ; whenever the dimension is clear from the context, the subscripts are omitted.

\begin{theorem}\label{thm:equivalence}
Let
\[
\rho(z) = \rho_0 + \rho_1 z+\ldots + \rho_r z^r\quad  \text{and}\quad \sigma(z) = \sigma_0 + \sigma_1 z + \ldots + \sigma_r z^r
\]
be two relatively prime real polynomials of degree $r \ge 1$ with $\rho_r\sigma_r\neq 0$.  
Given a nonzero real polynomial $\lambda(z)=\lambda_0+\lambda_1z+\cdots+\lambda_r z^r$ of degree at most $r$ and a constant $\gamma \geq 0$, the following statements are equivalent:
\begin{enumerate}[label=(\roman*),font=\upshape]
\item 
\begin{equation}
\label{eq:equi-thm:cond1}
 \operatorname{Re} \Bigl\{ \frac{\rho(z)}{\sigma(z)}\Bigr \} > \gamma \Bigl| \frac{ \lambda(z) }{\sigma(z)} \Bigr|^2,\quad \forall |z|>1 .
\end{equation}

\item
\begin{equation}
\label{eq:equi-thm:cond2}
\operatorname{Re}\bigl\{ \rho(z) \sigma(\bar{z})\bigr\} \geq \gamma | \lambda(z) |^2,\quad \forall |z| \geq 1.
\end{equation}

\item 
\begin{enumerate}[label=(\alph*),font=\upshape]
\item $\sigma(z) \neq 0,\quad \forall |z|>1$,
\item $\operatorname{Re} \{  \rho(z) \sigma(\bar{z}) \} \geq \gamma |\lambda(z)|^2,\quad \forall |z|=1$,
\item If  $z_0$ is a root of $\sigma(z)$ with $|z_0|=1$, then $z_0$ is a simple root, and $\frac{\rho(z_0)}{\sigma'(z_0)} \bar{z}_0 > 0$.
\end{enumerate}

\item There exist $r$ linearly independent  polynomials $p_1,\ldots,p_r$, and $q$ with real coefficients, $\deg p_j \leq r-1$, and $\deg q \leq r$, such that for any $z,w\in\mathbb{C}$,
\begin{equation}
\label{eq:equi-thm:cond4}
\rho(z) \sigma(w) + \rho(w) \sigma(z) - 2\gamma  \lambda(z) \lambda(w)  = q(z) q(w) + (zw-1) \sum_{j=1}^r p_j(z) p_j(w).
\end{equation}

\item
There exist $\boldsymbol{q} = [q_r,\ldots,q_0]^\top \in\mathbb{R}^{r+1}$ and a positive definite matrix $G\in \mathbb{R}^{r\times r}$ such that
\begin{equation}
\label{eq:equi-thm:cond5}
\boldsymbol{\rho}\boldsymbol{\sigma}^\top
+
\boldsymbol{\sigma} \boldsymbol{\rho}^\top
-2\gamma  \boldsymbol{\lambda} \boldsymbol{\lambda}^\top
=
\boldsymbol{q}\boldsymbol{q}^\top
+
\begin{bmatrix}
G & \boldsymbol{0}_{r} \\
\boldsymbol{0}_{r}^{\top} & 0
\end{bmatrix}
-
\begin{bmatrix}
0 & \boldsymbol{0}_{r}^{\top} \\
\boldsymbol{0}_{r} & G
\end{bmatrix},
\end{equation}
where $\boldsymbol{\rho}$, $\boldsymbol{\sigma}$, and
$\boldsymbol{\lambda}$ are the coefficient vectors of $\rho$, $\sigma$, and $\lambda$, respectively.
\end{enumerate}

\end{theorem}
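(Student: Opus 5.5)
Our plan is to follow the classical Baiocchi--Crouzeix route for Dahlquist's equivalence theorem. We work with the cycle (i)$\Rightarrow$(ii)$\Rightarrow$(iii)$\Rightarrow$(iv)$\Rightarrow$(v)$\Rightarrow$(ii), plus (ii)$\Rightarrow$(i). The only new ingredient is the term $\gamma|\lambda|^2$, which we simply carry along.

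\textbf{The analytic implications.} For (i)$\Rightarrow$(ii), multiply \eqref{eq:equi-thm:cond1} by $|\sigma(z)|^2$. This gives $\operatorname{Re}\{\rho(z)\overline{\sigma(z)}\}\ge\gamma|\lambda(z)|^2$ for $|z|>1$, after noting that $\sigma$ cannot vanish for $|z|>1$. Indeed, near a zero of $\sigma$ the quotient $\rho/\sigma$ takes all arguments, since $\rho$ and $\sigma$ are coprime. Because the coefficients are real, $\overline{\sigma(z)}=\sigma(\bar z)$, and the inequality extends to $|z|=1$ by continuity.

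For (ii)$\Rightarrow$(iii), the argument is the classical one. If $\sigma(z_0)=0$ with $|z_0|>1$, then $\rho(z_0)\ne0$, and the harmonic function $\operatorname{Re}\{\rho\bar\sigma\}$ changes sign near $z_0$, contradicting (ii). On the circle we do a local expansion $z=z_0(1+\varepsilon)$ with $\varepsilon>0$ small. This gives $\operatorname{Re}\{\rho(z)\overline{\sigma(z)}\}\approx\varepsilon\operatorname{Re}\{\rho(z_0)\overline{\sigma'(z_0)z_0}\}$, which must be $\ge\gamma|\lambda(z_0)|^2\ge0$. A double root, or a nonreal ratio, is excluded by also perturbing tangentially. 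Strictness comes from the coprimality $\rho(z_0)\neq 0$ together with the rotational freedom.

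For (ii)$\Rightarrow$(i), we need strictness for $|z|>1$. The function $h(z)=\operatorname{Re}\{\rho/\sigma\}-\gamma|\lambda/\sigma|^2$ is superharmonic, since $-|\lambda/\sigma|^2$ is superharmonic, and it is nonnegative on $|z|\ge1$ with a finite limit at infinity. By the minimum principle it is strictly positive unless it is constant. Constancy would force $\lambda\equiv0$ (excluded) or $\rho/\sigma$ to be a constant, which contradicts coprimality with $r\ge1$.

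\textbf{The algebraic core: (iii)$\Rightarrow$(iv).} Set $P(z,w)=\rho(z)\sigma(w)+\rho(w)\sigma(z)-2\gamma\lambda(z)\lambda(w)$. On $|z|=1$, $w=\bar z=1/z$, the Laurent polynomial $P(z,1/z)$ is real and nonnegative by (iii)(b). By the Fejér--Riesz theorem, $P(z,1/z)=q(z)q(1/z)$ for a real polynomial $q$ of degree $\le r$. Then $P(z,w)-q(z)q(w)$ vanishes on $zw=1$, so it is divisible by $zw-1$. The quotient $K(z,w)$ is symmetric with degree $\le r-1$ in each variable, so it corresponds to a symmetric matrix $G$.

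The main obstacle is proving $G\succ0$. Here we must choose $q$ correctly among the spectral factors: the one with zeros in $|z|\le1$ together with the factor structure at the unimodular roots of $\sigma$. For $|z|>1$ we have $K(z,\bar z)=\bigl(2\operatorname{Re}\{\rho\bar\sigma\}-2\gamma|\lambda|^2-|q|^2\bigr)/(|z|^2-1)$. We would follow Baiocchi--Crouzeix and show $K(z,\bar z)>0$ via the maximum principle applied to $|q/\sigma|^2$ against $2\operatorname{Re}(\rho/\sigma)-2\gamma|\lambda/\sigma|^2$ in the exterior disk. Then we would deduce positive definiteness: the vectors $(z^{r-1},\dots,1)$ for $|z|>1$ span $\mathbb C^r$, and condition (iii)(c) gives non-degeneracy at roots on the circle. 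Diagonalizing $G$ then yields the $p_j$, which are linearly independent because $G\succ0$.

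\textbf{The remaining links.} (iv)$\Leftrightarrow$(v) is coefficient matching: write $\sum_j p_j(z)p_j(w)=Z(z)^\top G Z(w)$ with $Z(z)=(z^{r-1},\dots,1)^\top$. Then $(zw-1)Z(z)^\top GZ(w)$ has exactly the matrix form on the right-hand side of \eqref{eq:equi-thm:cond5}.

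(v)$\Rightarrow$(ii) follows by evaluating \eqref{eq:equi-thm:cond5} at $w=\bar z$. This gives $2\operatorname{Re}\{\rho(z)\sigma(\bar z)\}-2\gamma|\lambda(z)|^2=|q(z)|^2+(|z|^2-1)Z(z)^*GZ(z)\ge0$ for $|z|\ge1$, which closes the cycle.
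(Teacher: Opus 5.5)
\paragraph{The gap: positive definiteness of $G$.}
The gap is at the step you flag as the main obstacle: proving $G\succ 0$ in (iii)$\Rightarrow$(iv). Your plan is to show $K(z,\bar z)>0$ for $|z|>1$ by a minimum-principle argument. You then conclude positive definiteness because the vectors $Z(z)=(z^{r-1},\dots,1)^\top$, $|z|>1$, span $\mathbb{C}^r$.

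That inference fails: a Hermitian form that is positive on a spanning set need not be positive semidefinite. Already for $r=2$, $G=\operatorname{diag}(1,-\tfrac12)$ gives $Z(z)^*GZ(z)=|z|^2-\tfrac12>0$ for every $|z|>1$, yet $G$ is indefinite. The diagonal values $K(z,\bar z)$ only see the form along the curve $z\mapsto Z(z)$. To get $G\succeq 0$ you would need positivity of the full kernel, $\sum_{i,j}c_i\bar c_jK(z_i,\bar z_j)\ge 0$ for all finite point sets, and a pointwise minimum-principle bound does not give that.

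Likewise, (iii)(c) does not supply strictness; definiteness has to come from the coprimality of $\rho$ and $\sigma$. Also, the choice of spectral factor is not the real issue: the argument below works for every real Fej\'er--Riesz factor $q$.

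\paragraph{How the paper closes it.}
The paper uses a Stein-equation argument, which you can adopt once you have (ii). Your superharmonic estimate gives (ii), provided you handle the unimodular roots of $\sigma$ carefully, as in the paper's (iii)$\Rightarrow$(i).
\begin{itemize}
\item With $A=(\rho+\sigma)/\sqrt2$ and $B=(\rho-\sigma)/\sqrt2$, condition (ii) reads $|A|^2\ge |B|^2+2\gamma|\lambda|^2$ on $|z|\ge 1$.
\item By coprimality, every root of $A$ then lies in the open unit disk, so the companion matrix $J$ of $A$ is Schur stable.
\item Compressing the coefficient identity onto the null space of $\boldsymbol{a}^\top$ gives $M-J^\top MJ=Q$, with $Q=L^\top(\boldsymbol{b}\boldsymbol{b}^\top+2\gamma\boldsymbol{\lambda}\boldsymbol{\lambda}^\top+\boldsymbol{q}\boldsymbol{q}^\top)L\succeq 0$.
\item Hence $M=\sum_{m\ge0}(J^\top)^mQJ^m\succeq 0$.
\end{itemize}
This is the matrix form of the classical energy argument. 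Along sequences satisfying the Schur-stable recurrence $\sum_i(\rho_i+\sigma_i)v^{n+i}=0$, the $G$-norm is nonincreasing and tends to zero, so it is nonnegative for arbitrary initial data.

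For strictness: if $\boldsymbol{x}^\top M\boldsymbol{x}=0$, then $\boldsymbol{b}^\top LJ^m\boldsymbol{x}=0$ for all $m$. An invariant-subspace (observability) argument then produces an eigenvalue $\lambda_*$ of $J$ with $A(\lambda_*)=B(\lambda_*)=0$. That is a common root of $\rho$ and $\sigma$, contradicting coprimality.

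\paragraph{Two smaller slips.}
First, $\operatorname{Re}\{\rho\bar\sigma\}$ is not harmonic, so argue with $\operatorname{Re}\{\rho/\sigma\}$ near the pole instead. Second, in (ii)$\Rightarrow$(i), constancy forces $\lambda/\sigma$ to be constant rather than $\lambda\equiv 0$. The contradiction with coprimality still follows.
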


\begin{proof}
We divide the proof into three parts: $\text{(i)} \Rightarrow \text{(ii)} \Rightarrow \text{(iii)} \Rightarrow \text{(i)}$, followed by $\text{(ii)} \Leftrightarrow \text{(iv)}$, and finally $\text{(iv)} \Leftrightarrow \text{(v)}$.

\paragraph{Part A: Equivalence of {\rm (i)}--{\rm (iii)}} 
% ~\\
\noindent \textrm{(i) $\Rightarrow$ (ii):} Assume that (i) holds.
First, we show that $\sigma(z) \neq 0$ for all $|z| > 1$. Suppose by contradiction that $\sigma(z)$ has a root $z^*$ with $|z^*| > 1$ of multiplicity $m \ge 1$. Since $\rho(z)$ and $\sigma(z)$ are relatively prime, $\rho(z^*) \neq 0$. Thus, $z^*$ is a pole of $\rho(z)/\sigma(z)$ of order $m$. In a neighborhood of $z^*$, the Laurent expansion gives 
\begin{equation}
\label{eq:equi-thm:1}
\frac{\rho(z)}{\sigma(z)} = a_{-m}(z-z^*)^{-m} + O((z-z^*)^{-m+1}) \quad \text{with}\quad  a_{-m} \neq 0.
\end{equation}
Let $a_{-m}=|a_{-m}|\mathrm e^{\mathrm{i} \phi}$, where
$\phi=\arg(a_{-m})$ denotes the phase angle of $a_{-m}$.
We approach $z^*$ along the ray $z = z^* + \varepsilon \mathrm{e}^{\mathrm{i}\theta}$ for $\varepsilon > 0$. The real part of the leading term in \eqref{eq:equi-thm:1} is $|a_{-m}| \varepsilon^{-m} \cos(\phi - m\theta)$. Since $m \ge 1$, we can choose an angle $\theta^*$ such that $\cos(\phi - m\theta^*) = -1$. Along the ray $\theta=\theta^*$, as $\varepsilon \to 0^+$, this real part approaches $-\infty$, dominating the remaining terms. Then there exists $\varepsilon^*>0$ such that
\begin{equation}
\label{eq:equi-thm:10}
 \operatorname{Re}\left\{ \frac{\rho(z)}{\sigma(z)} \right\} < 0,\quad \forall z = z^*+\varepsilon \mathrm{e}^{\mathrm{i}\theta^*}\quad \text{with}\quad 0<\varepsilon<\varepsilon^*.
\end{equation}
Choosing $\varepsilon$ small enough to ensure $|z|>1$ contradicts condition (i). Thus, $\sigma(z) \neq 0$ for all $|z| > 1$. 

Since $|\sigma(z)|^2 > 0$ for $|z|>1$, we multiply both sides of \eqref{eq:equi-thm:cond1} by $|\sigma(z)|^2 = \sigma(z)\sigma(\bar{z})$ to obtain $\operatorname{Re}\{\rho(z)\sigma(\bar{z})\} > \gamma | \lambda(z) |^{2}$ for all $|z| > 1$. 
Since the function $z \mapsto \operatorname{Re}\{\rho(z)\sigma(\bar z)\}-\gamma |\lambda(z)|^2$
is continuous in $\mathbb{C}$, letting $|z|\to 1$ yields condition (ii).

\textrm{(ii) $ \Rightarrow $ (iii):} Assume that (ii) is true. Condition (iii)(b) is obtained by restricting (ii) to $|z|=1$. For condition (iii)(a), we suppose by contradiction that $z^*$ with $|z^*|>1$ is a root of $\sigma(z)$ of multiplicity $m\geq 1$. By the assumption of relative primality, $\rho(z^*)\neq 0$. Then by the proof of $\text{(i)}\Rightarrow \text{(ii)}$, there exist $\varepsilon^*>0$ and $\theta^*$ such that \eqref{eq:equi-thm:10} holds. Thus, we have
\[
\operatorname{Re}\{\rho(z)\sigma(\bar{z})\}  = |\sigma(z)|^2 \operatorname{Re}\left\{\frac{\rho(z)}{ \sigma(z)}\right\} < 0,\quad \forall z = z^*+\varepsilon \mathrm{e}^{\mathrm{i}\theta^*}\quad \text{with}\quad 0<\varepsilon <\varepsilon^*.
\]
By choosing $\varepsilon$ small enough so that
$|z|\geq1$, we obtain $\operatorname{Re}\{\rho(z)\sigma(\bar{z})\} < 0$. This contradicts condition (ii).

For condition (iii)(c), assume that  $z_0 = \mathrm{e}^{\mathrm{i}\theta_0}$ is a root of $\sigma(z)$ of multiplicity $m\geq 1$. 
Then $z_0$ is a pole of $\rho(z)/\sigma(z)$ of order $m$.
In a neighborhood of $z_0$, we have the Laurent expansion \eqref{eq:equi-thm:1}, with $z^*$ replaced by $z_0$.
When $z$ approaches $z_0$ from the exterior $|z|>1$ along the 
trajectory $z = z_0(1+\varepsilon \mathrm{e}^{\mathrm{i}\theta})$ for $\varepsilon >0$ and $\theta \in [-\pi/2, \pi/2]$, the real part of the leading term of the Laurent series of $\rho(z)/\sigma(z)$ is
\begin{equation}
\label{eq:equi-thm:9}
 |a_{-m}| \varepsilon^{-m} \cos(\arg(a_{-m}\bar{z}_0^m) - m\theta).
\end{equation}
By condition (ii) and $\operatorname{Re}\{\rho(z)\sigma(\bar{z})\} = |\sigma(z)|^2 \operatorname{Re} \{\rho(z)/\sigma(z)\}$, \eqref{eq:equi-thm:9} must be nonnegative for all $\theta \in [-\pi/2, \pi/2]$ as $\varepsilon \to 0^+$. If $m \ge 2$, the angle $m\theta$ spans an interval of length of at least  $2\pi$. This makes \eqref{eq:equi-thm:9} strictly negative for some $\theta$, contradicting (ii). Thus, the root must be simple, i.e., $m=1$ and hence $\sigma'(z_0)\neq 0$. For $m=1$, $\cos(\arg(a_{-1}\bar{z}_0) - \theta)$ must remain nonnegative for all $\theta \in [-\pi/2, \pi/2]$, which requires that $\arg(a_{-1}\bar{z}_0)$ is an integer multiple of $2\pi$. Then $a_{-1}\bar{z}_0$ is a positive real number. Since the residue of $\rho(z)/\sigma(z)$ at $z_0$ is $a_{-1} = \rho(z_0)/\sigma'(z_0)$, this implies $\frac{\rho(z_0)}{\sigma'(z_0)}\bar{z}_0 > 0$.

\textrm{(iii) $\Rightarrow$ (i):} Assume that (iii) is true. 
Let $z_1, \dots, z_s$ be the simple boundary roots of $\sigma(z)$ on $|z|=1$. By condition (iii)(c), $c_j \coloneqq \bar{z}_j\rho(z_j)/\sigma'(z_j) > 0$. We isolate the boundary singularities by defining 
\begin{equation}
\label{eq:equi-thm:15}
h(z) \coloneqq \frac{\rho(z)}{\sigma(z)} - \sum_{j=1}^s \frac{c_j z_j}{z-z_j},
\end{equation}
making $h(z)$ holomorphic in $|z|>1$ and continuous for $|z| \ge 1$. Using the identity $\operatorname{Re}\{z_j/(z-z_j)\} = \frac{1}{2}(|z|^2-1)/|z-z_j|^2 - \frac{1}{2}$ for $z\neq z_j$, we have 
\begin{equation}
\label{eq:equi-thm:13}
 \operatorname{Re}\left \{\frac{\rho(z)}{\sigma(z)} \right \}   =  \operatorname{Re}\{h(z)\} - \sum_{j=1}^s \frac{c_j}{2}  + \sum_{j=1}^s \frac{c_j}{2} \frac{|z|^2-1}{|z-z_j|^2}
.
\end{equation}
Let 
\begin{equation}
\label{eq:equi-thm:14}
V(z) \coloneqq  \operatorname{Re}\left \{\frac{\rho(z)}{\sigma(z)} \right \}  - \gamma \left| \frac{\lambda(z)}{\sigma(z)}\right|^2= H(z) +\sum_{j=1}^s \frac{c_j}{2} \frac{|z|^2-1}{|z-z_j|^2},
\end{equation}
where 
\[
H(z) \coloneqq \operatorname{Re}\{h(z)\} - \sum_{j=1}^s \frac{c_j}{2}  - \gamma \Bigl| \frac{\lambda(z)}{\sigma(z)} \Bigr|^2 .
\]
Then $H(z)$ is continuous in $|z|>1$. 
Since $\sigma$ has real coefficients, $\bar z_j$ is also a root of $\sigma$.
If $\gamma>0$, then condition (iii)(b) evaluated at $z=z_j$ gives 
\[0=\operatorname{Re}\{\rho(z_j)\sigma(\bar z_j)\}\ge \gamma |\lambda(z_j)|^2.
\]
Hence $\lambda(z_j)=0$.
Since $z_j$ is a simple root of $\sigma$ by condition (iii)(c), the quotient
$\lambda(z)/\sigma(z)$ has a removable singularity at $z_j$.
Therefore, $H$ extends continuously to $|z|=1$.
If $\gamma=0$, the last term in $H$ is absent, and then the continuity of $H$ is immediately obtained. Thus, by condition (iii)(b) and \eqref{eq:equi-thm:14}, we have $H(z)\geq 0$ for $|z|=1$.

Using the map $w=1/z$, which maps $|z|\geq 1$ to $|w|\leq 1$, we define
\[
\begin{aligned}
&\tilde{\rho}(w) \coloneqq w^r\rho(1/w),\quad
\tilde{\sigma}(w) \coloneqq w^r\sigma(1/w),\quad
\tilde{\lambda}(w) \coloneqq w^r\lambda(1/w),\\
&
\tilde{h}(w) \coloneqq h(1/w),\quad
\tilde{H}(w) \coloneqq H(1/w).
\end{aligned}
\] 
Then $\tilde{\rho}$, $\tilde{\sigma}$, and $\tilde{\lambda}$ are polynomials, with
$\tilde{\sigma}(0)=\sigma_r\neq 0$. By the preceding discussion, $\tilde{h}$ is holomorphic
in $|w|<1$ and continuous for $|w|\leq1$, $\tilde{\lambda}/\tilde{\sigma}$ is holomorphic in $|w|<1$ and continuous on $|w|=1$ when $\gamma>0$, $\tilde{H}(w)$ is continuous for $|w|\leq 1$, and $\tilde{H}(w)\geq 0$ on $|w|=1$.

Since $\tilde{h}(w)$ is holomorphic in $|w|<1$ and continuous for $|w|\leq 1$, $\operatorname{Re}\{\tilde{h}(w)\}$ is harmonic for $|w|<1$ and continuous for $|w|\leq 1$.
Using the identity of the  Laplacian for a  holomorphic function $g$: $\Delta|g|^2 = 4|g'|^2$, we find 
\[
\Delta \tilde{H}(w) = -4\gamma |(\tilde{\lambda}(w)/\tilde{\sigma}(w))'|^2 \le 0,
\]
meaning $\tilde{H}(w)$ is a superharmonic function in $|w|<1$ and continuous for $|w|\leq 1$.
Applying the minimum principle for the superharmonic function  $\tilde{H}(w)$ gives $\tilde{H}(w) \geq 0$ for $|w| < 1$. Thus, $H(z)\geq 0 $ for $|z|>1$. In the following, we show that condition (i) is met, i.e., $V(z)>0$ for $|z|>1$, in two cases.

\begin{enumerate}[label=\arabic*)]
    \item  In the case of $s\geq 1$, from  $c_j > 0$ and \eqref{eq:equi-thm:14}, we have
    \[
V(z) = H(z) + \sum_{j=1}^s \frac{c_j}{2} \frac{|z|^2-1}{|z-z_j|^2} > 0\quad \text{for}\quad |z|>1.
\]

\item In the case of $s=0$, we have $V(z) = H(z) \geq 0$ for $|z|>1$. We show that $V(z)>0 $ for $|z|>1$ by contradiction. Suppose $V(z_*)= 0$ for some $|z_*|>1$. Then $\tilde{H}(w_*) = 0$ where $w_*=1/z_*$ and $|w_*|<1$. The strong minimum principle for the superharmonic function  $\tilde{H}(w)$   implies $\tilde{H}(w)\equiv 0$ for $|w|<1$, which is equivalent to $V(z)\equiv 0$ for $|z|>1$. This leads to contradictions in the following two cases.
\begin{itemize}
\item If $\gamma=0$, we have $\operatorname{Re}\{\rho(z)/\sigma(z)\} \equiv 0$ for $|z|>1$. 
The Cauchy--Riemann equations imply that $\rho(z)/\sigma(z)$ is a purely imaginary
constant $\mathrm{i}\beta$. On the other hand, $\rho(x)/\sigma(x)\in\mathbb{R}$ for all
sufficiently large real $x$, and therefore $\beta=0$. This means $\rho(z) \equiv 0$, contradicting that $\rho$ is a nonzero polynomial.

\item If $\gamma>0$, by $\tilde{H}(w)\equiv 0$ for $|w|<1$, we have 
\[
\Delta \tilde H(w)=-4\gamma | ( \tilde{\lambda}(w)/\tilde{\sigma}(w))^\prime |^2\equiv 0\quad \text{for} \quad |w|<1.
\]
Thus $| ( \tilde{\lambda}(w)/\tilde{\sigma}(w))^\prime | \equiv 0$ and $\tilde{\lambda}(w)/\tilde{\sigma}(w)$ is a constant.
It follows that $\operatorname{Re}\{\tilde{h}(w)\}$ is constant in $|w|<1$ since  $ \tilde{H}(w) = \operatorname{Re}\{\tilde{h}(w)\}-\gamma |\tilde{\lambda}(w)/\tilde{\sigma}(w)|^2\equiv 0$. 
Since $\tilde{h}$ is holomorphic, the Cauchy--Riemann equations imply that $\tilde h$ is a constant. Since $s=0$, we have $h(z)=\rho(z)/\sigma(z)$ from \eqref{eq:equi-thm:15}, and therefore 
$\tilde h(w)=\tilde\rho(w)/\tilde\sigma(w)$. Hence there exists a constant $d\in\mathbb C$ such that $\tilde\rho(w)= d \tilde\sigma(w)$,
equivalently,
$\rho(z)=d\,\sigma(z)$.
If $d=0$, then $\rho\equiv 0$ that is a contradiction; if $d\neq 0$, then
$\rho$ and $\sigma$ are proportional, contradicting the relative primality of $\rho$ and $\sigma$. 
\end{itemize}

From the above discussions, $H(z)>0$ for  $|z|>1$. 
\end{enumerate}

\paragraph{Part B: Equivalence of {\rm (ii)} and {\rm (iv)}}

 \textrm{(iv) $\Rightarrow$ (ii):} Assume that (iv) holds. Setting $w = \bar{z}$ in (iv) gives $2\operatorname{Re}\{\rho(z)\sigma(\bar{z})\} - 2\gamma | \lambda(z) |^{2} = |q(z)|^2 + (|z|^2-1)\sum_{j=1}^r |p_j(z)|^2$. For $|z| \geq 1$, $|z|^2-1 \geq 0$, and then the right-hand side is nonnegative, proving (ii).

\textrm{(ii) $\Rightarrow$ (iv):} Assume that (ii) holds. Define the  polynomial 
\begin{equation}
\label{eq:Fzw}
F(z,w) \coloneqq \rho(z)\sigma(w) + \sigma(z)\rho(w) - 2\gamma \lambda(z)\lambda(w).
\end{equation}
By (ii), $F(z, \bar{z}) \ge 0$ on $|z|=1$. According to the Fej\'{e}r--Riesz theorem (see e.g., \cite[Lemma 6.1.3]{daubechies1992ten}), there exists a polynomial $q(z)$ with real coefficients of degree at most $r$ such that 
\begin{equation}
\label{eq:Fz-decomposition}
    F(z,z^{-1}) = q(z)q(z^{-1}),\quad \forall |z|=1.
\end{equation} 
Both $F(z,z^{-1})$ and $q(z)q(z^{-1})$ are Laurent polynomials in $z$.
Since they agree on the unit circle, they agree identically for all $z\neq 0$. 
Define $D(z,w) \coloneqq F(z,w) - q(z)q(w)$. Then $D(z, z^{-1}) \equiv 0$ for $z\neq 0$. Since $D(z,w)$ is of degree $r$ in each variable, we have the expansion $
D(z,w) = \sum_{j=0}^r d_j(z) w^j$, where $d_j(z)$ is the polynomial of degree at most $r$ with real coefficients. Let 
\[
K(z,w) = \sum_{j=0}^{r-1} k_j(z) w^j,
\]
where $k_j(z) = z k_{j-1}(z)-d_j(z)$ for $1\leq j\leq r-1$ and $k_0(z) = -d_0(z)$.
A direct comparison of the coefficients of $w^j$ shows that 
\begin{equation}
    \label{eq:equi-thm:16}
D(z,w)=(zw-1)K(z,w).
\end{equation}
Since $K(z,w)$ is of degree $r-1$ in each variable and $D(z,w)=D(w,z)$, we have $K(z,w) = K(w,z)$. There exists a real symmetric matrix $M\in\mathbb{R}^{r\times r}$ such that
\begin{equation}
\label{eq:equi-thm:2}
K(z,w) = \boldsymbol{z}_{r-1}^\top M \boldsymbol{w}_{r-1},
\end{equation}
where $\boldsymbol{z}_{r-1} = [z^{r-1},z^{r-2},\ldots,1]^\top$ and $\boldsymbol{w}_{r-1} = [w^{r-1},w^{r-2},\ldots,1]^\top$. It remains to show that there exist $r$ linearly independent polynomials $p_1,\ldots,p_r$ with real coefficients and $\deg p_j \leq r-1$ such that
\begin{equation}
\label{eq:equi-thm:0}
K(z,w) =  \sum_{j=1}^r p_j(z)p_j(w),\quad \forall z,w\in\mathbb{C}.
\end{equation}
We proceed by splitting the remaining proof into three steps.

\emph{Step 1: Positive semi-definiteness of $M$.}
Substituting \eqref{eq:equi-thm:2} into \eqref{eq:equi-thm:16}, we obtain
\[
\begin{aligned}
&\boldsymbol{z}_r^\top \boldsymbol{\rho} \boldsymbol{\sigma}^\top \boldsymbol{w}_r
+\boldsymbol{z}_r^\top \boldsymbol{\sigma} \boldsymbol{\rho}^\top \boldsymbol{w}_r
-2\gamma \boldsymbol{z}_r^\top \boldsymbol{\lambda} \boldsymbol{\lambda}^\top \boldsymbol{w}_r - 
\boldsymbol{z}_r^\top \boldsymbol{q} \boldsymbol{q}^\top \boldsymbol{w}_r\\
= &
\boldsymbol{z}_r^\top 
\left(
\begin{bmatrix}
M & \boldsymbol{0}_{r} \\
\boldsymbol{0}_{r}^\top & 0
\end{bmatrix}
-
\begin{bmatrix}
0 & \boldsymbol{0}_{r}^\top \\
\boldsymbol{0}_{r} & M
\end{bmatrix}
\right)
 \boldsymbol{w}_r.
 \end{aligned}
\]
Since this identity holds for any $\boldsymbol{z}_r$ and $\boldsymbol{w}_r$, equating the 
coefficients yields
\[
\boldsymbol{\rho} \boldsymbol{\sigma}^\top 
+ \boldsymbol{\sigma} \boldsymbol{\rho}^\top
-2\gamma  \boldsymbol{\lambda} \boldsymbol{\lambda}^\top 
 - 
\boldsymbol{q} \boldsymbol{q}^\top
=
\begin{bmatrix}
M & \boldsymbol{0}_{r} \\
\boldsymbol{0}_{r}^\top & 0
\end{bmatrix}
-
\begin{bmatrix}
0 & \boldsymbol{0}_{r}^\top \\
\boldsymbol{0}_{r} & M
\end{bmatrix}.
\]
Using the identity $2xy=(x+y)^2-(x-y)^2$, we rewrite this formula as
\begin{equation}
\label{eq:equi-thm:17}
\boldsymbol{a} \boldsymbol{a}^\top 
- \boldsymbol{b} \boldsymbol{b}^\top
-2\gamma  \boldsymbol{\lambda} \boldsymbol{\lambda}^\top 
 - 
\boldsymbol{q} \boldsymbol{q}^\top
=
\begin{bmatrix}
M & \boldsymbol{0}_{r} \\
\boldsymbol{0}_{r}^\top & 0
\end{bmatrix}
-
\begin{bmatrix}
0 & \boldsymbol{0}_{r}^\top \\
\boldsymbol{0}_{r} & M
\end{bmatrix},
\end{equation}
where
$
\boldsymbol{a} = {(\boldsymbol{\rho}+\boldsymbol{\sigma})}/{\sqrt{2}}$
and
$
\boldsymbol{b} = {(\boldsymbol{\rho}-\boldsymbol{\sigma})}/{\sqrt{2}}.
$
We write
\[
\boldsymbol{a} = [a_r,a_{r-1},\ldots,a_0]^\top
\quad \text{and}\quad 
\boldsymbol{b} = [b_r,b_{r-1},\ldots,b_0]^\top.
\]
Then $a_i = (\rho_i+\sigma_i)/\sqrt{2}$ and $b_i = (\rho_i-\sigma_i)/\sqrt{2}$ for $i=0,\ldots,r$.

We first show that $a_r\neq 0$. Indeed,
if $a_r=0$, then $\sigma_r=-\rho_r\neq 0$. Hence, as $x \to \infty$ on the real line, the leading term of $F(x,x)$ is $2\rho_r\sigma_r x^{2r} - 2\gamma \lambda_r^2 x^{2r} = -2(\rho_r^2+\gamma \lambda_r^2 )x^{2r} < 0$, which contradicts $F(x,x) \ge 0$ for $x \ge 1$.

Since $a_r\neq0$, the columns of
\begin{equation}
\label{eq:equi-thm:24}
L \coloneqq
\begin{bmatrix}
-\dfrac{a_{r-1}}{a_r} & -\dfrac{a_{r-2}}{a_r} & \cdots & -\dfrac{a_0}{a_r} \\
1 & 0 & \cdots & 0 \\
0 & 1 & \cdots & 0 \\
\vdots & \vdots & \ddots & \vdots \\
0 & 0 & \cdots & 1
\end{bmatrix}
\in \mathbb{R}^{(r+1)\times r},
\end{equation}
form a basis of null space of the row vector $\boldsymbol{a}^\top$. Then 
$\boldsymbol{a}^\top L=\boldsymbol{0}_r^\top$. 

Multiplying \eqref{eq:equi-thm:17} from the left by $L^\top$ and right by $L$ yields 
\begin{equation}
\label{eq:equi-thm:18}
    M - J^\top M J = Q,
\end{equation}
where $ Q\coloneqq L^\top(\boldsymbol{b}\boldsymbol{b}^\top + 2\gamma \boldsymbol{\lambda}\boldsymbol{\lambda}^\top + \boldsymbol{q}\boldsymbol{q}^\top)L$ 
and
\[
J\coloneqq
\begin{bmatrix}
-\dfrac{a_{r-1}}{a_r} & -\dfrac{a_{r-2}}{a_r} & \cdots & -\dfrac{a_1}{a_r} & -\dfrac{a_0}{a_r} \\
1 & 0 & \cdots & 0 & 0 \\
0 & 1 & \cdots & 0 & 0 \\
\vdots & \vdots & \ddots & \vdots & \vdots \\
0 & 0 & \cdots & 1 & 0
\end{bmatrix}
\in \mathbb{R}^{r\times r}.
\]

If the spectral radius of $J$ is strictly less than $1$, then \eqref{eq:equi-thm:18} admits a solution in the form of convergent matrix series \cite[Eqs.~(2)--(3)]{smith1968}: 
\begin{equation}
\label{eq:equi-thm:19}
M=\sum_{m=0}^{\infty}(J^\top)^m QJ^m .
\end{equation}
Since $Q$ is positive semi-definite, \eqref{eq:equi-thm:19} implies that $M$ is also positive semi-definite.

We now show that the spectral radius of $J$ is strictly less than $1$. Define
\[
A(z) = \boldsymbol{z}_r^\top \boldsymbol{a},\quad 
B(z) = \boldsymbol{z}_r^\top \boldsymbol{b}.
\]
Then 
\[
A(z) = \frac{\rho(z)+\sigma(z)}{\sqrt{2}},\quad B(z) = \frac{\rho(z)-\sigma(z)}{\sqrt{2}},
\]
and therefore,
\[
F(z,w) = A(z)A(w) - B(z)B(w) - 2\gamma \lambda(z)\lambda(w). 
\]
By condition (ii), we have $F(z,\bar{z})\geq 0$ for $|z|\geq 1$, or, equivalently,
\[
|A(z)|^2 \geq |B(z)|^2 + 2\gamma |\lambda(z)|^2, \quad \text{for}\quad |z|\geq 1.
\]
Note that the eigenvalues of $J$ are precisely the roots of $A(z)$, counted with algebraic multiplicity. It remains to show that all zeros of $A(z)$ lie in the open unit disk.
Suppose by contradiction that $A(z_0)=0$ for some $|z_0|\geq 1$. Then 
\[
|B(z_0)|^2 + 2\gamma |\lambda(z_0)|^2 \leq |A(z_0)|^2 = 0.
\]
Hence $B(z_0)=0$. This implies $\rho(z_0)=\sigma(z_0)=0$, contradicting the relative primality of $\rho$ and $\sigma$. Thus, all roots of $A(z)$ lie in the open unit disk. Hence, the spectral radius of $J$ is strictly less than $1$, and the positive
semi-definiteness of $M$ follows from \eqref{eq:equi-thm:19}.

\emph{Step 2: Positive definiteness of $M$.}
We proceed by contradiction and assume that $M$ is not strictly positive definite. Then there must exist a nonzero real vector $\boldsymbol{x} \in \mathbb{R}^r$ such that $\boldsymbol{x}^\top M \boldsymbol{x} = 0$, which implies  
\[
\sum_{m=0}^\infty \boldsymbol{x}^\top (J^\top)^m Q J^m \boldsymbol{x} = 0.
\]
Since $Q$ is positive semi-definite and  each term in the sum is a nonnegative real number, the infinite sum vanishes if and only if every individual term vanishes.
Consequently, $\boldsymbol{x}^\top (J^\top)^m Q J^m \boldsymbol{x} = 0$ for $m\geq 0$. By the definition of $Q$, in particular, we have $\boldsymbol{b}^\top L J^m \boldsymbol{x} = 0$ for $m\geq 0$. Set $\boldsymbol{c}\coloneqq  L^\top \boldsymbol{b} \in \mathbb{R}^r$. Then
\[
\boldsymbol{c}^\top J^m \boldsymbol{x} = 0\quad \text{for}\quad m=0,\ldots,r-1.
\]
Thus, the following linear system has a nonzero solution $\boldsymbol{x}$:
\[
\begin{bmatrix} \boldsymbol{c}^\top  \\ \boldsymbol{c}^\top  J \\ \vdots \\ \boldsymbol{c}^\top  J^{r-1} \end{bmatrix} \boldsymbol{x} = \boldsymbol{0}_{r}.
\]
Equivalently, the column vectors $\boldsymbol{c}, J^\top \boldsymbol{c},\ldots, (J^\top)^{r-1} \boldsymbol{c}$ are linearly dependent, and therefore the Krylov subspace $\mathcal{K}_r (J^\top, \boldsymbol{c})\coloneqq \operatorname{span}\{\boldsymbol{c}, J^\top \boldsymbol{c},\ldots, (J^\top)^{r-1} \boldsymbol{c}\}$ has dimension less than $r$. Since $A(z)/a_r$ is the characteristic polynomial of $J^\top$, the Cayley--Hamilton theorem gives
\[
A(J^\top) = a_r (J^\top)^r + a_{r-1} (J^\top)^{r-1} + \ldots + a_0 I_r = 0,
\]
where $I_r\in\mathbb{R}^{r\times r}$ is the identity matrix. 
Since $a_r\neq 0$, it follows that $(J^\top)^r \boldsymbol{c} \in \mathcal{K}_r (J^\top, \boldsymbol{c})$. Therefore  $\mathcal{K}_r (J^\top, \boldsymbol{c})$ is an invariant subspace of $J^\top$, since $\boldsymbol{y}\in \mathcal{K}_r (J^\top, \boldsymbol{c})$ implies $J^\top \boldsymbol{y}\in \mathcal{K}_r (J^\top, \boldsymbol{c})$.

Let $\mathcal{V}\coloneqq \mathcal{K}_r (J^\top, \boldsymbol{c})^\perp$ 
be the orthogonal complement space of $\mathcal{K}_r (J^\top, \boldsymbol{c})$. Then $\mathcal{V}$ is non-trivial. Moreover, for any $\boldsymbol{v}\in \mathcal{V}$, we have 
\begin{equation}
\label{eq:equi-thm:21}
\boldsymbol{v}^\top \boldsymbol{y} = 0, \quad \forall \boldsymbol{y}\in \mathcal{K}_r (J^\top, \boldsymbol{c}).
\end{equation}
This implies
$
(J \boldsymbol{v})^\top \boldsymbol{y} = \boldsymbol{v}^\top (J^\top \boldsymbol{y}) = 0$,
since $J^\top \boldsymbol{y}\in \mathcal{K}_r (J^\top, \boldsymbol{c})$. Then $J \boldsymbol{v}\in\mathcal{V}$, and therefore $\mathcal{V}$ is an invariant subspace of $J$. Suppose that the dimension of $\mathcal{V}$ is $s$. Then there exists $S\in\mathbb{R}^{r\times s}$, whose column vectors form a basis of $\mathcal{V}$. By $J \mathcal{V}\subset \mathcal{V}$, there exists a nonzero matrix $T\in\mathbb{R}^{s\times s}$ such that 
$
JS = ST$.
Then there exist $\lambda_*\in\mathbb{C}$ and $\boldsymbol{0}_{s}\neq \tilde{\boldsymbol{v}}_* \in \mathbb{C}^s$ such that
$
T \tilde{\boldsymbol{v}}_* = \lambda_* \tilde{\boldsymbol{v}}_*.
$
Let $\boldsymbol{v}_* = S\tilde{\boldsymbol{v}}_*$. Then $\boldsymbol{0}_{r} \neq \boldsymbol{v}_* \in \mathbb{C}^r$ and
\begin{equation}
\label{eq:equi-thm:20}
J \boldsymbol{v}_* = J S \tilde{\boldsymbol{v}}_*
= ST \tilde{\boldsymbol{v}}_* = \lambda_* S \tilde{\boldsymbol{v}}_* =   \lambda_* \boldsymbol{v}_*.
\end{equation}
Moreover,
$
\boldsymbol{v}_*^\top \boldsymbol{c} = 
\tilde{\boldsymbol{v}}_*^\top S^\top \boldsymbol{c} = 0.
$

Since $\lambda_*$ is an eigenvalue of $J$, we have $A(\lambda_*)=0$. 
Write $\boldsymbol{v}_*=[v_{r-1},\ldots,v_0]^\top$. Substituting this into  \eqref{eq:equi-thm:20}, we obtain
$
v_{j+1}=\lambda_* v_j$ for $j=0,\ldots,r-2.
$
Thus
\begin{equation}
\label{eq:equi-thm:22}
\boldsymbol{v}_* = v_0 [\lambda_*^{r-1},\ldots,\lambda_*,1]^\top.
\end{equation}
Moreover, $v_0\neq0$; otherwise $\boldsymbol{v}_*=\boldsymbol{0}_{r}$.
Without loss of generality, we can scale the eigenvector $\boldsymbol{v}_*$ such that $v_0=1$. Since $A(\lambda_*)=0$ and $a_r\neq 0$, we have
\begin{equation}
\label{eq:equi-thm:23}
- \frac{1}{a_r} \sum_{i=0}^{r-1} a_i \lambda_{*}^i = \lambda_{*}^{r}.
\end{equation}
Using the definition of $L$ in \eqref{eq:equi-thm:24}, and combining \eqref{eq:equi-thm:22} with \eqref{eq:equi-thm:23}, we have
\[
L \boldsymbol{v}_* = [\lambda_*^{r},\lambda^{r-1}_*,\ldots,\lambda_*,1]^\top.
\]

Since $\boldsymbol{v}_*^\top\boldsymbol{c}=0$, using $\boldsymbol{c}=L^\top \boldsymbol{b}$, we have
\[
0 =   \boldsymbol{v}_*^\top \boldsymbol{c} = 
 \boldsymbol{v}_*^\top L^\top \boldsymbol{b}
 = (L \boldsymbol{v}_*)^\top \boldsymbol{b}
 = [\lambda_*^{r},\lambda^{r-1}_*,\ldots,\lambda_*,1] \boldsymbol{b} = B(\lambda_*).
\]
Then the complex number $\lambda_*$ is a common root of both polynomials $A(z)$ and $B(z)$. We have
$
\rho(\lambda_*) = [A(\lambda_*) + B(\lambda_*)]/{\sqrt{2}} = 0$ and $\sigma(\lambda_*) = [A(\lambda_{*}) - B(\lambda_{*})]/{\sqrt{2}} = 0$.
That is, $\rho(z)$ and $\sigma(z)$ share a common root $z = \lambda_{*} \in \mathbb{C}$. This  contradicts the assumption that the polynomials $\rho(z)$ and $\sigma(z)$ are relatively prime.

\emph{Step 3: Derivation of \eqref{eq:equi-thm:0}.} 

Since $M$ is real positive definite, let $M=PP^\top$ be its Cholesky
factorization, where $P=[\boldsymbol p_1,\ldots,\boldsymbol p_r]\in\mathbb R^{r\times r}$.
Define $p_j(z)=\boldsymbol z_{r-1}^\top \boldsymbol p_j$.
Then each $p_j$ has real coefficients and degree at most $r-1$, and
\[
K(z,w)=\boldsymbol z_{r-1}^\top M \boldsymbol w_{r-1}
      =\sum_{j=1}^r p_j(z)p_j(w).
\]
Because $P$ is nonsingular, the polynomials $p_1,\ldots,p_r$ are linearly independent.

\paragraph{Part C: Equivalence of {\rm (iv)} and {\rm (v)}} 

Condition \textup{(iv)} is equivalent to the identity
\begin{equation}
\label{eq:equi-thm:25}
\boldsymbol{z}^\top_r (\boldsymbol{\rho} \boldsymbol{\sigma}^\top + 
\boldsymbol{\sigma} \boldsymbol{\rho}^\top  -2\gamma \boldsymbol{\lambda} \boldsymbol{\lambda}^\top )
\boldsymbol{w}_r
=
\boldsymbol{z}_r^\top \boldsymbol{q}\boldsymbol{q}^\top \boldsymbol{w}_r
+
\boldsymbol{z}_r^\top 
\left(
\begin{bmatrix}
G & \boldsymbol{0}_{r} \\
\boldsymbol{0}_{r}^\top & 0
\end{bmatrix}
-
\begin{bmatrix}
0 & \boldsymbol{0}_{r}^\top \\
\boldsymbol{0}_{r} & G
\end{bmatrix}
\right)
 \boldsymbol{w}_r
\end{equation}
for all $\boldsymbol{z}_r$ and $\boldsymbol{w}_r$, 
where $G=PP^\top$ and $P=[\boldsymbol{p}_1,\ldots,\boldsymbol{p}_r]\in\mathbb{R}^{r\times r}$ with $\boldsymbol{p}_j\in\mathbb{R}^r$ being the corresponding coefficient vector of $p_j$. Equating the coefficients in \eqref{eq:equi-thm:25} gives
\eqref{eq:equi-thm:cond5}. Moreover, the polynomials
$p_1,\ldots,p_r$ are linearly independent if and only if their coefficient
vectors $\boldsymbol{p}_1,\ldots,\boldsymbol{p}_r$ are linearly
independent. This is equivalent to the nonsingularity of $P$, and hence to
the positive definiteness of $G=PP^\top$.
\end{proof}

% Let $\boldsymbol{p}_j\in\mathbb{R}^r$ be the corresponding coefficient vector of $p_j$, and set
% $P=[\boldsymbol{p}_1,\ldots,\boldsymbol{p}_r]\in\mathbb{R}^{r\times r}$ and $G=PP^\top$.
% Then
% $
% \sum_{j=1}^r p_j(z)p_j(w)
% =
% \boldsymbol{z}_{r-1}^\top G\,\boldsymbol{w}_{r-1}$.
% Therefore, condition \textup{(iv)} is equivalent to
% \begin{equation}
% \label{eq:equi-thm:11}
% \boldsymbol{z}^\top_r (\boldsymbol{\rho} \boldsymbol{\sigma}^\top + 
% \boldsymbol{\sigma} \boldsymbol{\rho}^\top  -2\gamma \boldsymbol{\lambda} \boldsymbol{\lambda}^\top )
% \boldsymbol{w}_r
% =
% \boldsymbol{z}_r^\top \boldsymbol{q}\boldsymbol{q}^\top \boldsymbol{w}_r
% +
% (zw-1)\boldsymbol{z}_{r-1}^\top G\,\boldsymbol{w}_{r-1},
% \end{equation}
% Note that
% \begin{equation}
% \label{eq:equi-thm:12}
% \boldsymbol{z}_r^\top
% \begin{bmatrix}
% G & \\
% & 0
% \end{bmatrix}
% \boldsymbol{w}_r
% =zw\,\boldsymbol{z}_{r-1}^\top G\,\boldsymbol{w}_{r-1}
% \quad\text{and}\quad
% \boldsymbol{z}_r^\top
% \begin{bmatrix}
% 0 & \\
% & G
% \end{bmatrix}
% \boldsymbol{w}_r
% =\boldsymbol{z}_{r-1}^\top G\,\boldsymbol{w}_{r-1}.
% \end{equation}
% Substituting \eqref{eq:equi-thm:12} into \eqref{eq:equi-thm:11} and equating the coefficients gives \eqref{eq:equi-thm:cond5}.  Moreover, the polynomials $p_1,\ldots,p_r$ are linearly independent if and only if the vectors $\boldsymbol{p}_1,\ldots,\boldsymbol{p}_r$ are linearly independent, i.e., if and only if $P$ is nonsingular, which is equivalent to $G=PP^\top$ being positive definite.

\cref{thm:equivalence} provides five equivalent statements involving the polynomials $\rho(z)$, $\sigma(z)$, and $\lambda(z)$, together with a constant $\gamma$. These statements include the corresponding inequalities outside the unit disk and on its closed exterior in conditions (i) and (ii), the corresponding inequality on the unit circle and root conditions in condition (iii), the polynomial decomposition in condition (iv), and the matrix decomposition associated with the polynomial coefficient vectors in condition (v). In particular, when $\gamma=0$ and $\lambda(z)=z^r$, the equivalence between conditions (ii) and (iv) recovers the result of \cite{BaiocchiCrouzeix1989}, while the implication from condition (i) to condition (iv) recovers \cite[Lemma 3.1]{Dahlquist-1978-G-stability}.

It should be noted that condition (iii) reduces the verification of $\operatorname{Re}\{\rho(z)\sigma(\bar z)\}\ge \gamma|\lambda(z)|^2$ from the closed exterior of the unit disk to the unit circle, supplemented by the root conditions for $\sigma(z)$. This characterization motivates the construction of the feasibility problem \eqref{FP} in \Cref{sec:feasibility}. Moreover, the matrix $G$ whose existence is asserted in condition (v) is essential for constructing the positive definite matrices in \cref{lemma:multiplier-LMM-energy}. 
Thus, the equivalence between conditions (iii) and (v) reduces the construction of these positive definite matrices to the verification of algebraic conditions on the associated polynomials, which is presented in \Cref{thm:main}.

\begin{remark}
\label{rem:determine-G}
 Once $\boldsymbol{\rho}$, $\boldsymbol{\sigma}$, $\boldsymbol{\lambda}$, and
$\gamma$ in \eqref{eq:equi-thm:cond5} are fixed, the vector $\boldsymbol{q}$ and the matrix $G$
% in \eqref{eq:equi-thm:cond5}
can be determined explicitly. First,
$\boldsymbol{q}$ is the coefficient vector of the polynomial $q(z)$ in
\eqref{eq:equi-thm:cond4}. It is obtained from 
\eqref{eq:Fz-decomposition} by the Fej\'er--Riesz theorem; see 
\cite[Step~2 in Section~3.3]{QuanWangWangXuPre} for the detailed derivation of $\boldsymbol{q}$. Then $G$ can be obtained from \eqref{eq:equi-thm:cond5}.

% We next compute $G$. Multiplying \eqref{eq:equi-thm:cond5} from the left by $P_0$ and from
% the right by $P_0^\top$, where $P_0=[I_r, \boldsymbol{0}] \in \mathbb{R}^{r\times (r+1)}$, gives
% \[
% G-SGS^\top = P_0 \left( \boldsymbol{\rho} \boldsymbol{\sigma}^\top + \boldsymbol{\sigma} \boldsymbol{\rho}^\top - 2\gamma \boldsymbol{\lambda} \boldsymbol{\lambda}^\top - \boldsymbol{q} \boldsymbol{q}^\top \right ) P_0^\top,
% \]
% where
% \[
% S=\begin{bmatrix}
% 0 & 0 & \cdots & 0 & 0 \\
% 1 & 0 & \cdots & 0 & 0 \\
% 0 & 1 & \cdots & 0 & 0 \\
% \vdots & \vdots & \ddots & \vdots & \vdots \\
% 0 & 0 & \cdots & 1 & 0
% \end{bmatrix}
% \in \mathbb{R}^{r\times r}.
% \]
% Then
% \[
% G = \sum_{m=0}^{r-1} S^m  P_0 \left( \boldsymbol{\rho} \boldsymbol{\sigma}^\top + \boldsymbol{\sigma} \boldsymbol{\rho}^\top - 2\gamma \boldsymbol{\lambda} \boldsymbol{\lambda}^\top - \boldsymbol{q} \boldsymbol{q}^\top \right ) P_0^\top (S^\top)^m.
% \]
\end{remark}

\subsection{Dissipation, nonnegativity, and consistency of the modified energy}
\label{subsec:properties-mod-energy}

In this subsection, we establish several properties of the modified energy $E_G^n$ defined in \eqref{eq:EG}. Its dissipation property is derived under the positive-definiteness conditions given in \cref{lemma:multiplier-LMM-energy}, while sufficient conditions for its non-negativity are given in \cref{lem:1}. The consistency result in \cref{rem:consistency} follows from a property of the matrix $G_B$ stated in \cref{lem:1gnu1}. These results are then  combined in \cref{thm:main} to prove the modified energy dissipation of IMEX-LMMs.

% In this subsection, we establish three properties of the modified energy $E_G^n$, namely dissipation, non-negativity, and consistency. First, \cref{lemma:multiplier-LMM-energy} gives sufficient positive-definiteness conditions for the modified energy dissipation law of IMEX-LMMs. Next, \cref{lem:1} proves the uniform non-negativity of $E_G^n$.
% {\blue Finally, \cref{lem:1gnu1} establishes a property of the matrix $G_B$, from which \cref{rem:consistency} deduces the consistency of $E_G^n$ with the original energy.}
% Henceforth, $\boldsymbol{0}_{m\times n}$ denotes the zero matrix in $\mathbb{R}^{m\times n}$; whenever the dimension is clear from the context, the subscripts are omitted.

\begin{lemma}[Positive-definiteness conditions for energy dissipation]
\label{lemma:multiplier-LMM-energy}
Assume that the gradient flow \eqref{model:phase-field-eq} has a Lipschitz continuous nonlinearity satisfying \eqref{eq:lip-f}, and that there exist constants $\zeta>0$ and $0<\eta\le 1$ such that \eqref{eq:l2-control} holds. Consider the $k$-step IMEX-LMM \eqref{scheme:lmm-diff-form}, and let $\boldsymbol{a}^{(k)}$, $\boldsymbol{B}^{(k)}$, and $\hat{\boldsymbol{b}}^{(k)}$ be the coefficient vectors defined in \eqref{eq:coef-vecs}. Let the multiplier in \eqref{eq:multiplier1} be specified by the vectors $\boldsymbol{\mu}$ and $\boldsymbol{\nu}$ defined in \eqref{eq:multiplier2}. 
Suppose that there exist two positive definite matrices $G_a\in \mathbb{R}^{(k-1)\times (k-1)}$ and $G_B\in \mathbb{R}^{k\times k}$, and two constants $\alpha, \beta> 0$ satisfying the following positive-definiteness conditions:
\begin{equation}
\label{eq:pdc}
\boldsymbol{x}^\top U_a \boldsymbol{x} \geq \alpha x_1^2,\quad
\boldsymbol{y}^\top U_B \boldsymbol{y} \geq \beta (y_1-y_2)^2,
\end{equation}
for any $\boldsymbol{x}=[x_1,\ldots,x_k]^\top\in\mathbb{R}^k$ and $\boldsymbol{y} = [y_1,y_2,\ldots,y_{k+1}]^\top\in \mathbb{R}^{k+1}$,
where 
\begin{align}
U_a &= \frac{1}{2}\bigl(\boldsymbol{\mu} (\boldsymbol{a}^{(k)})^\top + \boldsymbol{a}^{(k)} \boldsymbol{\mu}^\top\bigr) - 
\begin{bmatrix} G_a & \boldsymbol{0}_{k-1} \\ \boldsymbol{0}_{k-1}^\top & 0 \end{bmatrix} + \begin{bmatrix} 0 & \boldsymbol{0}_{k-1}^\top \\ \boldsymbol{0}_{k-1} & G_a \end{bmatrix}, \label{eq:Ua} \\[1ex]
U_B &= \frac{1}{2}\bigl(\boldsymbol{\nu} (\boldsymbol{B}^{(k)})^\top + \boldsymbol{B}^{(k)} \boldsymbol{\nu}^\top\bigr) - 
\begin{bmatrix} G_B & \boldsymbol{0}_{k} \\ \boldsymbol{0}_{k}^\top & 0 \end{bmatrix} + \begin{bmatrix} 0 & \boldsymbol{0}_{k}^\top \\ \boldsymbol{0}_{k} & G_B \end{bmatrix}. \label{eq:UB}
\end{align}  
Then the modified energy $E_G^n$ defined in \eqref{eq:EG} satisfies $E_G^{n+1} \leq E_G^n$ under the time-step restriction:
\begin{equation}
\label{eq:tau-max}
0<\tau \leq \tau_{\max}, \quad \tau_{\max} \coloneqq 
\frac{\alpha \beta^{\bar{\eta}}}{(\ell_f \hat{c}_0/2) ^{1+\bar{\eta}} \eta (1-\eta)^{\bar{\eta}} \zeta^{2+2\bar{\eta}} },
\end{equation}
where $\ell_f$ is the Lipschitz constant in \eqref{eq:lip-f}, $\hat{c}_0 =\sum_{j=0}^{k-1} \tilde{c}_j$ with $\tilde{c}_j$ defined in \eqref{eq:tilde-c}, and $\bar{\eta}\coloneqq \frac{1-\eta}{\eta}$. Here and throughout, we adopt the convention $0^0=1$.
\end{lemma}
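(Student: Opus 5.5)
We test the scheme \eqref{scheme:lmm-diff-form} with the multiplier $\sum_{i=0}^{k-1}\mu_i\delta u^{n+1-i}=\sum_{i=0}^k\nu_i u^{n+1-i}$ and treat its three parts separately. For the first part set $\boldsymbol{x}$ to be the vector of functions $[\delta u^{n+1},\ldots,\delta u^{n+2-k}]^\top$. Then
\[
-\frac1\tau\Bigl(\sum_{i}a_i^{(k)}\mathcal M^{-1}\delta u^{n+1-i},\sum_i\mu_i\delta u^{n+1-i}\Bigr)
\]
equals $-\frac1\tau(\boldsymbol x,\mathcal M^{-1}\boldsymbol x)_{\frac12(\boldsymbol\mu\boldsymbol a^\top+\boldsymbol a\boldsymbol\mu^\top)}$. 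We insert \eqref{eq:Ua} and note that $(\boldsymbol x,-\mathcal M^{-1}\boldsymbol x)_{\mathrm{diag}(G_a,0)}=(\boldsymbol v_{n+1},-\mathcal M^{-1}\boldsymbol v_{n+1})_{G_a}$ and $(\boldsymbol x,-\mathcal M^{-1}\boldsymbol x)_{\mathrm{diag}(0,G_a)}=(\boldsymbol v_n,-\mathcal M^{-1}\boldsymbol v_n)_{G_a}$. This gives the telescoping $G_a$-difference plus $\frac1\tau(\boldsymbol x,-\mathcal M^{-1}\boldsymbol x)_{U_a}$.

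Since $-\mathcal M^{-1}$ is self-adjoint and positive, the pointwise inequality $\boldsymbol x^\top U_a\boldsymbol x\ge\alpha x_1^2$ lifts to functions. To see this, diagonalize $-\mathcal M^{-1}$, or write $U_a-\alpha e_1e_1^\top=CC^\top$ and expand. The result is
\[
\frac1\tau(\boldsymbol x,-\mathcal M^{-1}\boldsymbol x)_{U_a}\ge\frac\alpha\tau\|(-\mathcal M)^{-1/2}\delta u^{n+1}\|^2.
\]
The $\mathcal L$-part is handled in the same way with $\boldsymbol y=[u^{n+1},\ldots,u^{n+1-k}]^\top$ and \eqref{eq:UB}, using self-adjointness and positive semi-definiteness of $\mathcal L$. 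It gives
\[
(\boldsymbol u_{n+1},\mathcal L\boldsymbol u_{n+1})_{G_B}-(\boldsymbol u_n,\mathcal L\boldsymbol u_n)_{G_B}+\beta\|\mathcal L^{1/2}\delta u^{n+1}\|^2 .
\]
The sign conventions must be tracked so that both quadratic forms appear on the side producing $E^{n+1}_G-E^n_G+\text{dissipation}\le\text{nonlinear terms}$.

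The main obstacle is the explicit nonlinear term $-\bigl(f(u^n)+\sum_i\hat b_i\delta f(u^{n+1-i}),\sum_j\mu_j\delta u^{n+1-j}\bigr)$. First, write $(f(u^n),\delta u^{n+1-j})$ with \eqref{eq:lip-F}. For $j=0$ this gives $(F(u^{n+1})-F(u^n),1)$ up to $\frac{\ell_f}2\|\delta u^{n+1}\|^2$. For $j\ge1$, write $f(u^n)=f(u^{n-j})+\sum_{m}\delta f(u^{n+1-m})$ over the $j$ intermediate steps; the extra terms are bounded by Lipschitz and Young. Summing with weights $\mu_j$ produces $\sum_j\mu_j(F(u^{n+1-j})-F(u^{n-j}),1)$, which is the telescoping of the $F$-part of $E_G$. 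Next, the $\hat b$ terms are bounded by $\ell_f|\hat b_i||\mu_j|\|\delta u^{n+1-i}\|\|\delta u^{n+1-j}\|\le\frac{\ell_f}2|\hat b_i||\mu_j|(\|\cdot\|^2+\|\cdot\|^2)$. Collecting all coefficients of $\|\delta u^{n+1-i}\|^2$ should give exactly $\frac{\ell_f}2\tilde c_i$ as in \eqref{eq:tilde-c}; I expect the bookkeeping to need the term $\sum_{j\ge i}j|\mu_j|$. Then
\[
\sum_i\tilde c_i\|\delta u^{n+1-i}\|^2=\sum_{i\ge1}\hat c_i\bigl(\|\delta u^{n+2-i}\|^2-\|\delta u^{n+1-i}\|^2\bigr)+\hat c_0\|\delta u^{n+1}\|^2 ,
\]
so the $\hat c$-term of $E_G$ telescopes. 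This leaves $\frac{\ell_f\hat c_0}2\|\delta u^{n+1}\|^2$ to be absorbed.

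For the absorption, apply \eqref{eq:l2-control} and weighted Young with exponents $1/\eta$ and $1/(1-\eta)$:
\[
\frac{\ell_f\hat c_0}2\zeta^2 X^{2\eta}Y^{2(1-\eta)}\le\frac\alpha\tau X^2+\beta Y^2,
\]
where $X=\|(-\mathcal M)^{-1/2}\delta u^{n+1}\|$ and $Y=\|\mathcal L^{1/2}\delta u^{n+1}\|$. This holds exactly when $\tau\le\tau_{\max}$ in \eqref{eq:tau-max}; optimizing the Young constant yields the $\eta(1-\eta)^{\bar\eta}$ factors, and $\eta=1$ is the degenerate case covered by the convention $0^0=1$. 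Combining the three parts gives $E_G^{n+1}\le E_G^n$.
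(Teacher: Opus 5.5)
Your proposal is correct and is essentially the paper's proof: you test with the multiplier, peel off the telescoping $G_a$/$G_B$ parts via \eqref{eq:Ua}--\eqref{eq:UB}, bound the explicit terms with \eqref{eq:lip-f}, \eqref{eq:lip-F} and Young, telescope with $\hat c_i$, and absorb $\frac{\ell_f\hat c_0}{2}\|\delta u^{n+1}\|^2$ using \eqref{eq:l2-control} and Young with exponents $1/\eta$, $1/(1-\eta)$, which reproduces $\tau_{\max}$ exactly; your factorization $U_a-\alpha\boldsymbol{e}_1\boldsymbol{e}_1^\top=CC^\top$ also usefully justifies lifting \eqref{eq:pdc} to operator-weighted forms, a step the paper uses tacitly. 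There are two small slips: your displayed telescoping identity has the bracket reversed (it must read $\hat c_i(\|\delta u^{n+1-i}\|^2-\|\delta u^{n+2-i}\|^2)$, as already $k=2$ shows), and splitting $f(u^n)-f(u^{n-j})$ into increments $\delta f(u^{n+1-m})$ with symmetric Young gives the coefficient $i|\mu_i|+\sum_{j\ge i}|\mu_j|$ rather than the paper's $|\mu_i|+\sum_{j\ge i}j|\mu_j|$ (which comes from $\|u^n-u^{n-i}\|^2\le i\sum_{j=1}^{i}\|\delta u^{n+1-j}\|^2$), so you obtain coefficients at most $\tilde c_i$ rather than exactly $\tilde c_i$ — harmless, since you may enlarge them to $\tilde c_i$ before telescoping.
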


\begin{proof}
Taking the inner product of \eqref{scheme:lmm-diff-form} with $\sum_{i=0}^{k-1} \mu_i \delta u^{n+1-i}$, we obtain
\begin{equation}
\label{eq:thm1:1}
I_1 + I_2  = I_3 + I_4 ,
\end{equation}
where
\[
\begin{aligned}
 & I_1 = \frac{1}{\tau} \biggl( \sum_{i=0}^{k-1} a_i^{(k)} \mathcal{M}^{-1} \delta u^{n+1-i},  \sum_{i=0}^{k-1} \mu_i \delta u^{n+1-i}  \biggr), 
   \\
 & I_2 = -\biggl( \sum_{i=0}^{k} B_i^{(k)} \mathcal{L}  u^{n+1-i},  \sum_{i=0}^{k-1} \mu_i \delta u^{n+1-i} \biggr ) = 
 -\biggl( \sum_{i=0}^{k} B_i^{(k)} \mathcal{L}  u^{n+1-i},  \sum_{i=0}^{k} \nu_i  u^{n+1-i} \biggr ) , \\
& I_3 = \biggl( f(u^n), \sum_{i=0}^{k-1} \mu_i \delta u^{n+1-i}  \biggr ), \;
 I_4=\biggl( \sum_{i=1}^{k-1} \hat{b}_i^{(k)} \delta f(u^{n+1-i}),   \sum_{i=0}^{k-1} \mu_i \delta u^{n+1-i}\biggr ).
 \end{aligned}
\]
Let $\boldsymbol{w}_n = [\delta u^n,\delta u^{n-1},\ldots,\delta u^{n+1-k}]^\top$ and $\tilde{\boldsymbol{w}}_n = [ u^n, u^{n-1},\ldots, u^{n-k}]^\top$. 
Using \eqref{eq:UB}, we have
\begin{equation}
\label{eq:thm1:234-1}
\begin{aligned}
I_2 & = - \frac{1}{2} \left[ (\tilde{\boldsymbol{w}}_{n+1}, \mathcal{L} \tilde{\boldsymbol{w}}_{n+1})_{\boldsymbol{\nu} (\boldsymbol{B}^{(k)})^\top}
+
(\tilde{\boldsymbol{w}}_{n+1}, \mathcal{L} \tilde{\boldsymbol{w}}_{n+1})_{\boldsymbol{B}^{(k)} \boldsymbol{\nu}^\top}
\right ] \\
& = - (\tilde{\boldsymbol{w}}_{n+1},\mathcal{L} \tilde{\boldsymbol{w}}_{n+1})_{U_B}  - (\boldsymbol{u}_{n+1}, \mathcal{L} \boldsymbol{u}_{n+1})_{G_B} + (\boldsymbol{u}_{n},  \mathcal{L} \boldsymbol{u}_{n})_{G_B},
\end{aligned}
\end{equation}

% By multiplying \eqref{eq:UB} from the right by $\mathcal{L} \tilde{\boldsymbol{w}}_{n+1}$, and taking the inner product with $ \tilde{\boldsymbol{w}}_{n+1}$, we obtain
% \begin{equation}
% \label{eq:thm1:234-1}
% I_2
% =  - (\tilde{\boldsymbol{w}}_{n+1},\mathcal{L} \tilde{\boldsymbol{w}}_{n+1})_{U_B}  - (\boldsymbol{u}_{n+1}, \mathcal{L} \boldsymbol{u}_{n+1})_{G_B} + (\boldsymbol{u}_{n},  \mathcal{L} \boldsymbol{u}_{n})_{G_B},
% \end{equation}
where $\boldsymbol{u}_n$ is defined in \eqref{eq:eng-vn-un}. 
Analogously, $I_1$ can be rewritten using \eqref{eq:Ua} as
\begin{equation}
\label{eq:thm1:234-2}
I_1 
= \frac{1}{\tau} ( \boldsymbol{w}_{n+1},  \mathcal{M}^{-1} \boldsymbol{w}_{n+1})_{U_a} + \frac{1}{\tau}  ( \boldsymbol{v}_{n+1}, \mathcal{M}^{-1} \boldsymbol{v}_{n+1})_{G_a}
- \frac{1}{\tau} (\boldsymbol{v}_{n}, \mathcal{M}^{-1} \boldsymbol{v}_{n})_{G_a}, 
\end{equation}
where $\boldsymbol{v}_n$ is defined in \eqref{eq:eng-vn-un}. 

We divide $I_3$ into two parts:
\begin{equation}
\label{eq:thm1:5}
I_3 = \biggl( f(u^n), \sum_{i=0}^{k-1} \mu_i \delta u^{n+1-i}  \biggr ) = \sum_{i=0}^{k-1} \mu_i (f(u^n), u^{n+1-i}-u^{n-i}) = J_1 + J_2,
\end{equation}
where
\[
J_1 = \sum_{i=1}^{k-1} \mu_i (f(u^n)-f(u^{n-i}), u^{n+1-i} - u^{n-i}), \ J_2 = \sum_{i=0}^{k-1} \mu_i (f(u^{n-i}), u^{n+1-i} - u^{n-i}).
\]
By \eqref{eq:lip-f}, the Cauchy--Schwarz inequality, and $2ab\leq a^2+b^2$, we have
\begin{equation}
\label{eq:thm1:7}
\begin{aligned}
J_1  &  \geq -\sum_{i=1}^{k-1} |\mu_i| \|f(u^{n})-f(u^{n-i})\| \|\delta u^{n+1-i}\| 
 \geq - \ell_f \sum_{i=1}^{k-1} |\mu_i| \| u^n-u^{n-i} \|  \|\delta u^{n+1-i}\| \\
& \geq - \frac{\ell_f}{2} \sum_{i=1}^{k-1} |\mu_i| \bigl ( \| u^n-u^{n-i} \|^2+  \|\delta u^{n+1-i}\|^2 \bigr ) \\
& \geq - \frac{\ell_f}{2} \sum_{i=1}^{k-1} |\mu_i| \biggl ( i \sum_{j=1}^i \left \|  \delta u^{n+1-j} \right \|^2+  \|\delta u^{n+1-i}\|^2 \biggr ) \\
&  = - \frac{\ell_f}{2} \sum_{i=1}^{k-1} \Bigl(|\mu_i| + \sum_{j=i}^{k-1} j |\mu_j| \Bigr) \|\delta u^{n+1-i}\|^2,
\end{aligned}
\end{equation}
where in the last inequality, we used the identity $u^n-u^{n-i} = \sum_{j=1}^i \delta u^{n+1-j}$ and the inequality $\|\sum_{j=1}^i x_j\|^2\leq i \sum_{j=1}^i\|x_j\|^2$. 
To estimate $J_2$, we apply \eqref{eq:lip-F} with $u=u^{n+1-i}$ and $v=u^{n-i}$ to obtain
\begin{equation}
\label{eq:thm1:8}
|(F(u^{n+1-i}) - F(u^{n-i}), 1 ) - (f(u^{n-i}), u^{n+1-i}-u^{n-i})|   \leq \frac{\ell_f}{2} \| u^{n+1-i}-u^{n-i}\|^2.  
\end{equation}
Then
\begin{equation}
\label{eq:thm1:9}
\mu_i (f(u^{n-i}), u^{n+1-i} - u^{n-i}) \geq \mu_i (F(u^{n+1-i})-F(u^{n-i}),1) - \frac{\ell_f}{2} |\mu_i| \| u^{n+1-i} - u^{n-i}\|^2.
\end{equation}
Summing \eqref{eq:thm1:9} over $i$ gives
\begin{equation}
\label{eq:thm1:10}
J_2 \geq \sum_{i=0}^{k-1}  \mu_i (F(u^{n+1-i})-F(u^{n-i}),1) - \frac{\ell_f}{2}  \sum_{i=0}^{k-1} |\mu_i| \| u^{n+1-i} - u^{n-i}\|^2.
\end{equation}
Substituting \eqref{eq:thm1:7} and \eqref{eq:thm1:10} into \eqref{eq:thm1:5} gives
\begin{equation}
\label{eq:thm1:11}
\begin{aligned}
I_3 \geq  & - \frac{\ell_f}{2} \sum_{i=1}^{k-1} \biggl(|\mu_i| + \sum_{j=i}^{k-1} j |\mu_j| \biggr) \|\delta u^{n+1-i}\|^2  \\
& + \sum_{i=0}^{k-1}  \mu_i (F(u^{n+1-i})-F(u^{n-i}),1) - \frac{\ell_f}{2}  \sum_{i=0}^{k-1} |\mu_i| \| u^{n+1-i} - u^{n-i}\|^2.
\end{aligned}
\end{equation}

For $I_4$, by \eqref{eq:lip-f}, the Cauchy--Schwarz inequality and $2ab\le a^2+b^2$, we obtain
\begin{equation}
\label{eq:thm1:12}
% I_4\ge -\frac{\ell_f}{2}\sum_{i=0}^{k-1}\biggl((1-\delta_{i,0})|\hat b_i^{(k)}|\sum_{j=0}^{k-1}|\mu_j|
% +|\mu_i|\sum_{j=1}^{k-1}|\hat b_j^{(k)}|\biggr)\|\delta u^{n+1-i}\|^2.
\begin{aligned}
I_4  &
 \geq -\ell_f \sum_{i=1}^{k-1}  \sum_{j=0}^{k-1} | \hat{b}_i^{(k)}| |\mu_j|  \| \delta u^{n+1-i} \| \|\delta u^{n+1-j}\| \\
& \geq - \frac{\ell_f}{2} \sum_{i=1}^{k-1}  \sum_{j=0}^{k-1} | \hat{b}_i^{(k)}|  |\mu_j| \left ( \| \delta u^{n+1-i} \|^2 +  \|\delta u^{n+1-j}\|^2 \right ) \\
% & = - \frac{\ell_f}{2} \sum_{i=1}^{k-1}  | \hat{b}_i^{(k)}| \left( \sum_{j=0}^{k-1}   |\mu_j| \right )   \| \delta u^{n+1-i} \|^2 - \frac{\ell_f}{2} \sum_{j=0}^{k-1} |\mu_j| \left( \sum_{i=1}^{k-1} | \hat{b}_i^{(k)}| \right) \|\delta u^{n+1-j}\|^2 \\
& = -\frac{\ell_f}{2} \sum_{i=0}^{k-1}  \left (  (1- \delta_{i,0}) | \hat{b}_i^{(k)}|  \sum_{j=0}^{k-1}   |\mu_j|   +  |\mu_i| \sum_{j=1}^{k-1} | \hat{b}_j^{(k)}|  \right )  \|\delta u^{n+1-i}\|^2.
\end{aligned}
\end{equation}

Substituting \eqref{eq:thm1:234-1}, \eqref{eq:thm1:234-2}, \eqref{eq:thm1:11} and \eqref{eq:thm1:12} into \eqref{eq:thm1:1}, we have
\begin{equation}
\label{eq:thm1:13}
\begin{aligned}
& -\frac{1}{\tau} ( \boldsymbol{v}_{n+1}, \mathcal{M}^{-1} \boldsymbol{v}_{n+1})_{G_a}
 + ( \boldsymbol{u}_{n+1}, \mathcal{L} \boldsymbol{u}_{n+1})_{G_B}   \\
& + \frac{1}{\tau} ( \boldsymbol{v}_{n}, \mathcal{M}^{-1} \boldsymbol{v}_{n})_{G_a}
- ( \boldsymbol{u}_{n},  \mathcal{L} \boldsymbol{u}_{n})_{G_B}
 +\sum_{i=0}^{k-1} \mu_i (F(u^{n+1-i}) - F(u^{n-i}), 1 )  \\
& \leq \frac{1}{\tau}  (\boldsymbol{w}_{n+1},  \mathcal{M}^{-1} \boldsymbol{w}_{n+1})_{U_a} - (\tilde{\boldsymbol{w}}_{n+1}, \mathcal{L} \tilde{\boldsymbol{w}}_{n+1})_{U_B} 
 + \frac{\ell_f}{2} \sum_{i=0}^{k-1} \tilde{c}_i \|\delta u^{n+1-i}\|^2,
\end{aligned}
\end{equation}
where $\tilde{c}_i$ is defined in \eqref{eq:tilde-c} by collecting the coefficients of $\|\delta u^{n+1-i}\|^2$ in the estimates for $I_3$ and $I_4$. We rewrite
\begin{equation}
\label{eq:thm1:14}
\begin{aligned}
\sum_{i=0}^{k-1}  \tilde{c}_i \|\delta u^{n+1-i}\|^2 & =  \sum_{i=1}^{k-1}  \tilde{c}_i \|\delta u^{n+1-i}\|^2  + \tilde{c}_0 \|\delta u^{n+1}\|^2 \\
& = \sum_{i=1}^{k-1}  \hat{c}_i \| \delta u^{n+1-i}\|^2 - \sum_{i=1}^{k-1}  \hat{c}_i \| \delta u^{n+2-i}\|^2 + \hat{c}_0 \|\delta u^{n+1}\|^2, \\
\end{aligned}
\end{equation}
where $\hat{c}_i = \sum_{j=i}^{k-1} \tilde{c}_j$ for $0\leq i \leq k-1$. Substituting \eqref{eq:thm1:14} into \eqref{eq:thm1:13}, and using the definition of $E_G^n$ in \eqref{eq:EG}, we have
\begin{equation}
\label{eq:thm1:18}
\begin{aligned}
& E_G^{n+1}-E_G^n \\
& \leq \frac{1}{\tau}  (\boldsymbol{w}_{n+1},  \mathcal{M}^{-1} \boldsymbol{w}_{n+1})_{U_a} - (\tilde{\boldsymbol{w}}_{n+1}, \mathcal{L} \tilde{\boldsymbol{w}}_{n+1})_{U_B} 
+  \frac{\ell_f \hat{c}_0 }{2} \|\delta u^{n+1}\|^2 \\
& \leq -\frac{\alpha}{\tau}\left\|(-\mathcal{M})^{-1 / 2} \delta u^{n+1}\right\|^2 -\beta\left\|\mathcal{L}^{1 / 2} \delta u^{n+1}\right\|^2  +  \frac{\ell_f \hat{c}_0 }{2}  \|\delta u^{n+1}\|^2. 
\end{aligned}
\end{equation}

In the case of $\eta = 1$, \eqref{eq:l2-control} gives $\|\delta u^{n+1}\|\leq \zeta \| (-\mathcal{M})^{-1/2}  \delta u^{n+1} \|$. 
It is straightforward to verify that 
$E_{G}^{n+1}\leq E_G^n$
when $\tau$ satisfies \eqref{eq:tau-max}. 
In the case of $0<\eta<1$, we apply Young's inequality $ab\leq \frac{a^p}{p}+\frac{b^q}{q}$ with $p=\frac{1}{\eta}$ and $q = \frac{1}{1-\eta}$. Replacing $a$ and $b$ by  $\xi^{\eta-1} a^{2\eta}$ and $\xi^{1-\eta}  b^{2-2\eta}$ respectively for any $\xi>0$, yields
\begin{equation}
a^{2\eta} b^{2-2\eta} \leq \eta \xi^{-\frac{1-\eta}{\eta}} a^2 + (1-\eta)\xi b^2.
\label{eq:thm1:15}
\end{equation}
Applying \eqref{eq:thm1:15} to \eqref{eq:l2-control}, we have
\begin{equation}
\label{eq:thm1:16}
\begin{aligned}
\|\delta u^{n+1}\|^2
 & \leq  \zeta^2 \| (-\mathcal{M})^{-1/2} \delta u^{n+1} \|^{2\eta} \| \mathcal{L}^{1/2} \delta u^{n+1} \|^{2-2\eta} \\
& \leq  \zeta^2 \left[\eta \xi^{-\frac{1-\eta}{\eta}} \| (-\mathcal{M})^{-1/2} \delta u^{n+1}\|^2
+ (1-\eta) \xi \| \mathcal{L}^{1/2} \delta u^{n+1}\|^2 \right].
\end{aligned}
\end{equation}
To ensure the decay of $E_G^n$ in \eqref{eq:thm1:18}, we then impose
\begin{equation}
\label{eq:thm1:17}
-\frac{\alpha}{\tau}+\frac{\ell_f \hat{c}_0}{2}  \zeta^2 \eta \xi^{-\frac{1-\eta}{\eta}} \leq 0 \quad \text {and}\quad -\beta+ \frac{\ell_f \hat{c}_0}{2} \zeta^2(1-\eta) \xi \leq 0 .
\end{equation}
By taking 
\[
\xi=\frac{2\beta}{  {\ell_f \hat{c}_0}  \zeta^2(1-\eta)},
\]
the second inequality of \eqref{eq:thm1:17} holds naturally and we then obtain the time-step restriction \eqref{eq:tau-max}.
\end{proof}

The following lemma shows the non-negativity of $E_G^n$.

\begin{lemma}[Sufficient conditions for non-negativity of the modified energy]
\label{lem:1}
Suppose that the assumptions of \cref{lemma:multiplier-LMM-energy} hold. Let $\hat{c}_i = \sum_{j=i}^{k-1}\tilde{c}_j$ for $0\le i\le k-1$, with $\tilde{c}_j$ defined in \eqref{eq:tilde-c}, and define the constants
\begin{equation}
\label{eq:bar-c}
\bar{c}_i = 
 \sum_{q=0}^{k-1} |\mu_q|
\sum_{m=0}^{i-1}\sum_{j=i}^{k-1}
|\mu_m|\,|\mu_j|\,(j-m).
\end{equation}
Let $c_{\min}\coloneqq \min_{1\le i\le k-1} (\hat{c}_i/2-\bar{c}_i)$.
Then, the modified energy $E_G^n$ defined in \eqref{eq:EG} satisfies $E_G^n \ge 0$ uniformly, provided 
\begin{equation}
\label{eq:bar-tau-max}
0 < \tau \le \bar{\tau}_{\max},\quad
\bar{\tau}_{\max} \coloneqq 
\left \{
\begin{aligned}
& +\infty, & c_{\min} \geq 0, \\
&  \frac{\lambda_a 4^{-\bar{\eta}}  \lambda_B^{\bar{\eta}}  }{( \ell_f  |c_{\min}| )^{1+\bar{\eta}} \eta (1-\eta)^{\bar{\eta}} \zeta^{2+2\bar{\eta}} }, & c_{\min} < 0,
\end{aligned}
\right .
\end{equation}
with $\lambda_a$ and $\lambda_B$ defined in \eqref{eq:lambda-ab}.
\end{lemma}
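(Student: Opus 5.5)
The plan is to bound each of the four pieces of $E_G^n$ from below separately and then absorb the possibly negative remainder into the two positive quadratic terms, in the same way as at the end of the proof of \cref{lemma:multiplier-LMM-energy}.

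\emph{Step 1: the two quadratic terms.} By \eqref{eq:lambda-ab}, applied componentwise after diagonalizing, or more simply by writing the $G$-forms in the eigenbasis of $G_a$ and $G_B$, I get
\[
-\tfrac{1}{\tau}(\boldsymbol v_n,\mathcal M^{-1}\boldsymbol v_n)_{G_a}\ge \tfrac{\lambda_a}{\tau}\sum_{i=1}^{k-1}\|(-\mathcal M)^{-1/2}\delta u^{n+1-i}\|^2
\]
and
\[
(\boldsymbol u_n,\mathcal L\boldsymbol u_n)_{G_B}\ge \lambda_B\sum_{i=0}^{k-1}\|\mathcal L^{1/2}u^{n-i}\|^2 .
\]
Since each $u^{n-i}$ enters at most two of the differences $\delta u^{n+1-i}$, $1\le i\le k-1$, it follows that
\[
\sum_{i=1}^{k-1}\|\mathcal L^{1/2}\delta u^{n+1-i}\|^2\le 4\sum_{i=0}^{k-1}\|\mathcal L^{1/2}u^{n-i}\|^2 .
\]
This factor $4$ should produce the $4^{-\bar\eta}$ in \eqref{eq:bar-tau-max}.

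\emph{Step 2: the potential term.} This is the main obstacle, because some $\mu_i$ may be negative. I will set $w\coloneqq\sum_{i=0}^{k-1}\mu_i u^{n-i}$ and apply \eqref{eq:lip-F} with $v=w$ and $u=u^{n-i}$. Multiplying by $\mu_i$ and summing, the linear terms $(f(w),\sum_i\mu_i(u^{n-i}-w))$ vanish because $\sum_i\mu_i=1$. This leaves
\[
\sum_i\mu_i(F(u^{n-i}),1)\ge (F(w),1)-\tfrac{\ell_f}{2}\sum_i|\mu_i|\,\|u^{n-i}-w\|^2,
\]
and $(F(w),1)\ge0$ by assumption. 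Next I write $u^{n-i}-w=\sum_j\mu_j(u^{n-i}-u^{n-j})$ and apply weighted Cauchy--Schwarz, which gives
\[
\|u^{n-i}-w\|^2\le\sum_q|\mu_q|\sum_j|\mu_j|\,\|u^{n-i}-u^{n-j}\|^2 .
\]
For each pair $m<j$, I expand $u^{n-m}-u^{n-j}=\sum_{l=m+1}^{j}\delta u^{n+1-l}$ and use $\|\sum\|^2\le (j-m)\sum\|\cdot\|^2$. Collecting the coefficient of $\|\delta u^{n+1-l}\|^2$ over all pairs with $m<l\le j$ should give exactly the constants $2\bar c_l$ of \eqref{eq:bar-c}, since the symmetric double sum counts each pair twice. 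Hence
\[
\sum_i\mu_i(F(u^{n-i}),1)\ge -\ell_f\sum_{l=1}^{k-1}\bar c_l\|\delta u^{n+1-l}\|^2 .
\]
I expect the bookkeeping here, matching the indices and the factor $\tfrac12$ against \eqref{eq:bar-c}, to be the delicate part.

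\emph{Step 3: combination.} Adding the last term of \eqref{eq:EG} yields
\[
E_G^n\ge \tfrac{\lambda_a}{\tau}\sum_i\|(-\mathcal M)^{-1/2}\delta u^{n+1-i}\|^2+\tfrac{\lambda_B}{4}\sum_i\|\mathcal L^{1/2}\delta u^{n+1-i}\|^2+\ell_f c_{\min}\sum_{i=1}^{k-1}\|\delta u^{n+1-i}\|^2 .
\]
If $c_{\min}\ge0$, this is already nonnegative for every $\tau$. If $c_{\min}<0$, I bound each $\|\delta u^{n+1-i}\|^2$ by \eqref{eq:l2-control} combined with the Young-type inequality \eqref{eq:thm1:15}. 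I choose $\xi=\lambda_B/(4\ell_f|c_{\min}|\zeta^2(1-\eta))$ so that the $\mathcal L$-part is absorbed exactly. The remaining requirement
\[
\ell_f|c_{\min}|\zeta^2\eta\,\xi^{-\bar\eta}\le\lambda_a/\tau
\]
is precisely $\tau\le\bar\tau_{\max}$. The case $\eta=1$ is immediate, using the convention $0^0=1$.
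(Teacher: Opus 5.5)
Your proposal is correct and follows essentially the same route as the paper. The shared steps are: expanding the potential term around $\bar u=\sum_j\mu_j u^{n-j}$ (so the linear terms cancel because $\sum_i\mu_i=1$); a weighted Cauchy--Schwarz plus telescoping bound giving $-\ell_f\sum_m\bar c_m\|\delta u^{n+1-m}\|^2$, where the ordered double sum yields $2\bar c_m$ and is halved by $\ell_f/2$, exactly matching \eqref{eq:bar-c}; the factor-$4$ comparison $\|\mathcal L^{1/2}\boldsymbol v_n\|^2\le 4\|\mathcal L^{1/2}\boldsymbol u_n\|^2$; and the Young-inequality absorption with $\xi=\lambda_B/(4\ell_f|c_{\min}|\zeta^2(1-\eta))$, which reproduces $\bar\tau_{\max}$ in both the $0<\eta<1$ and the $\eta=1$ cases.
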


\begin{proof}
Let $\bar{u} = \sum_{j=0}^{k-1} \mu_j u^{n-j}$. 
Applying \eqref{eq:lip-F} with $u=u^{n-i}$ and $v=\bar{u}$, we have
\begin{equation}
\label{eq:lem1:1}
|(F(u^{n-i}) - F(\bar{u}), 1 )  - (f( \bar{u}), u^{n-i}-\bar{u}) |   \leq
\frac{\ell_f}{2} \| u^{n-i}-\bar{u}\|^2.
\end{equation}
Then
\begin{equation}
\label{eq:lem1:2}
\mu_i (F(u^{n-i}), 1)
\ge
\mu_i (F(\bar{u}), 1)
+ \mu_i (f(\bar{u}), u^{n-i} - \bar{u})
- |\mu_i| \frac{\ell_f}{2} \|u^{n-i} - \bar{u}\|^2.
\end{equation}
Summing \eqref{eq:lem1:2} over $i$ gives
\begin{equation}
\label{eq:lem1:3}
\begin{aligned}
 \sum_{i=0}^{k-1} \mu_i (F(u^{n-i}),1) & \geq \sum_{i=0}^{k-1} \left[ \mu_i (F(\bar{u}), 1) 
 +  \mu_i (f(\bar{u}), u^{n-i} - \bar{u}) 
 \right ]
 - \frac{\ell_f}{2} \sum_{i=0}^{k-1}  |\mu_i|  \|u^{n-i} - \bar{u}\|^2 \\
& \geq - \frac{\ell_f}{2} \sum_{i=0}^{k-1}  |\mu_i|  \|u^{n-i} - \bar{u}\|^2,
\end{aligned}
\end{equation}
where in the last inequality, we used the fact that $\sum_{i=0}^{k-1} \mu_i=1$, $(F(\bar{u}),1)\geq 0$, and $\sum_{i=0}^{k-1}  \mu_i (u^{n-i} - \bar{u}) = 0$. By the Cauchy--Schwarz inequality, we have
\begin{equation}
\label{eq:lem1:4}
\|u^{n-i}-\bar{u}\|^2
=
\left\|\sum_{j=0}^{k-1}\mu_j\bigl(u^{n-i}-u^{n-j}\bigr)\right\|^2
\leq \sum_{q=0}^{k-1} |\mu_q|\sum_{j=0}^{k-1}|\mu_j|\,\|u^{n-i}-u^{n-j}\|^2.
\end{equation}
Then
\begin{equation}
\label{eq:lem1:5}
\begin{aligned}
- \frac{\ell_f}{2} \sum_{i=0}^{k-1}  |\mu_i|  \|u^{n-i} - \bar{u}\|^2 & \geq - \frac{\ell_f}{2} \sum_{q=0}^{k-1} |\mu_q| \sum_{i=0}^{k-1}  \sum_{j=0}^{k-1}  |\mu_i|  \, |\mu_j|\,\|u^{n-i}-u^{n-j}\|^2 \\
& =-\,\ell_f \sum_{q=0}^{k-1} |\mu_q|
\sum_{0\le i<j\le k-1}
|\mu_i|\,|\mu_j|\,\|u^{n-i}-u^{n-j}\|^2.
\end{aligned}
\end{equation}
Note that $u^{n-i}-u^{n-j}
=\sum_{m=i+1}^{j}\delta u^{n+1-m}$.
Then we have $
\|u^{n-i}-u^{n-j}\|^2
\leq
(j-i)\sum_{m=i+1}^{j}\|\delta u^{n+1-m}\|^2$.
Substituting this into \eqref{eq:lem1:5} gives
\begin{equation}
\label{eq:lem1:7}
\begin{aligned}
- \frac{\ell_f}{2} \sum_{i=0}^{k-1}  |\mu_i|  \|u^{n-i} - \bar{u}\|^2
& \geq -\,\ell_f \sum_{q=0}^{k-1} |\mu_q|
\sum_{0\le i<j\le k-1}
|\mu_i|\,|\mu_j|\,(j-i)\sum_{m=i+1}^{j}\|\delta u^{n+1-m}\|^2 \\
% & =
% -\ell_f \sum_{m=1}^{k-1}
% \left[
%  \sum_{q=0}^{k-1} |\mu_q|
% \sum_{i=0}^{m-1}\sum_{j=m}^{k-1}
% |\mu_i|\,|\mu_j|\,(j-i)
% \right]
% \|\delta u^{n+1-m}\|^2 \\
& =
-\ell_f \sum_{m=1}^{k-1}
\bar{c}_m
\|\delta u^{n+1-m}\|^2,
\end{aligned}
\end{equation}
where $\bar{c}_m$ is defined in \eqref{eq:bar-c}. Substituting \eqref{eq:lem1:7} into \eqref{eq:lem1:3}, we obtain 
\begin{equation}
\label{eq:lem1:8}
\sum_{i=0}^{k-1} \mu_i (F(u^{n-i}),1)\geq
-\ell_f \sum_{m=1}^{k-1}
\bar{c}_m
\|\delta u^{n+1-m}\|^2.
\end{equation}

Substituting \eqref{eq:lem1:8} into the definition of $E^n_G$ in \eqref{eq:EG}, and using \eqref{eq:lambda-ab}, we have
\begin{equation}\small
\label{eq:lem1:9}
\begin{aligned}
E_G^n & = 
-\frac{1}{\tau} ( \boldsymbol{v}_{n},  \mathcal{M}^{-1} \boldsymbol{v}_{n})_{G_a}
 + (\boldsymbol{u}_{n}, \mathcal{L} \boldsymbol{u}_{n})_{G_B}  
 + \sum_{i=0}^{k-1} \mu_i (F(u^{n-i}) ,  1) 
 +  \frac{\ell_f}{2}   \sum_{i=1}^{k-1}  \hat{c}_i \| \delta u^{n+1-i}\|^2  \\
 & \geq \frac{1}{\tau} \lambda_a \|(-\mathcal{M})^{-1/2} \boldsymbol{v}_n\|^2 + \lambda_B \| \mathcal{L}^{1/2} \boldsymbol{u}_n\|^2 + \ell_f \sum_{i=1}^{k-1} \left(\frac{\hat{c}_i}{2} - \bar{c}_i \right)  \| \delta u^{n+1-i}\|^2.
\end{aligned}
\end{equation}
If $c_{\min}\geq 0$, then $\hat{c}_i/2 \geq \bar{c}_i$ for $i=1,\ldots,k-1$, and then $E^n_G\geq 0$ holds.

If ${c_{\min} < 0}$,
\begin{equation}
\label{eq:lem1:10}
\begin{aligned}
E_G^n  & \geq  \frac{1}{\tau} \lambda_a \|(-\mathcal{M})^{-1/2} \boldsymbol{v}_n\|^2 + \lambda_B \| \mathcal{L}^{1/2} \boldsymbol{u}_n\|^2 + \ell_f  {c_{\min}}   \sum_{i=1}^{k-1}  \| \delta u^{n+1-i}\|^2 \\
& \geq  \frac{1}{\tau} \lambda_a \|(-\mathcal{M})^{-1/2} \boldsymbol{v}_n\|^2 + \frac{\lambda_B}{4} \| \mathcal{L}^{1/2} \boldsymbol{v}_n\|^2 + \ell_f  {c_{\min}}   \| \boldsymbol{v}_n\|^2, \\
\end{aligned}
\end{equation}
where we use
\[
\begin{aligned}
\|\mathcal{L}^{1/2} \boldsymbol{v}_n\|^2 & = \sum_{i=0}^{k-2} \| \mathcal{L}^{1/2} \delta u^{n-i}\|^2 \\
& = \sum_{i=0}^{k-2} \left[ \| \mathcal{L}^{1/2} u^{n-i} \|^2 + \|\mathcal{L}^{1/2} u^{n-1-i}\|^2 - 2(\mathcal{L}^{1/2} u^{n-i}, \mathcal{L}^{1/2} u^{n-1-i}) \right ] \\
& \leq 2 \sum_{i=0}^{k-2} \left[  \|\mathcal{L}^{1/2} u^{n-i}\|^2 + \|\mathcal{L}^{1/2} u^{n-1-i}\|^2 \right ]
\leq 4 \|\mathcal{L}^{1/2} \boldsymbol{u}_n\|^2.
\end{aligned}
\]

In the case of $\eta = 1$, \eqref{eq:l2-control} gives $\|\delta u^{n+1-i}\|\leq \zeta \| (-\mathcal{M})^{-1/2}  \delta u^{n+1-i} \|$. Then $\|\boldsymbol{v}_n\| \leq \zeta \| (-\mathcal{M})^{-1/2}  \boldsymbol{v}_n \|$. 
It is straightforward to verify that 
$E_{G}^{n}\geq 0$
when $\tau$ satisfies \eqref{eq:bar-tau-max}. 
In the case of $0<\eta<1$, we replace $\delta u^{n+1}$ by $\delta u^{n+1-i}$ in \eqref{eq:thm1:16} and then obtain
\begin{equation}
\label{eq:lem1:11}
\|\delta u^{n+1-i}\|^2 
\leq \zeta^2 \left(
\eta \xi^{-\frac{1-\eta}{\eta}} \|(-\mathcal{M})^{-1/2} \delta u^{n+1-i}\|^2
+ (1-\eta)\xi \|\mathcal{L}^{1/2} \delta u^{n+1-i}\|^2
\right)
\end{equation}
for any $\xi>0$. Then
\begin{equation}
\label{eq:lem1:12}
\| \boldsymbol{v}_n \|^2 \leq \zeta^2 \left(
\eta \xi^{-\frac{1-\eta}{\eta}} \|(-\mathcal{M})^{-1/2} \boldsymbol{v}_n \|^2
+ (1-\eta)\xi \|\mathcal{L}^{1/2} \boldsymbol{v}_n \|^2
\right).
\end{equation}
To ensure that \eqref{eq:lem1:10} is lower bounded by 0, we then impose
\begin{equation}
\label{eq:lem1:13}
\frac{\lambda_a}{\tau}+  \ell_f  {c_{\min}}  \zeta^2 \eta \xi^{-\frac{1-\eta}{\eta}} \geq 0 \quad \text {and}\quad \frac{\lambda_B}{4} + \ell_f  {c_{\min}}  \zeta^2(1-\eta) \xi \geq 0 .
\end{equation}
By taking 
\[
\xi = \frac{\lambda_B}{ 4 \ell_f {|c_{\min}|} \zeta^2 (1-\eta)},
\]
the second inequality of \eqref{eq:lem1:13} holds naturally and we then obtain the time-step restriction \eqref{eq:bar-tau-max}.
\end{proof}

We now present a property of $G_B$ that will be used to obtain the consistency of $E_G^n$ with the original energy $E[u(t_n)]$.

\begin{lemma}
\label{lem:1gnu1}
Any real matrix $G_B$ satisfying the positive-definiteness conditions in
\cref{lemma:multiplier-LMM-energy} satisfies
\begin{equation}
\label{eq:lgnu1:1}
\boldsymbol{1}_k^\top G_B \boldsymbol{1}_k = \frac{1}{2} \sum_{i=0}^{k-1} \mu_i,
\end{equation}
where $\boldsymbol{1}_k = [1,\ldots,1]^\top\in\mathbb{R}^k$.
\end{lemma}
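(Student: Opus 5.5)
The idea is to test the quadratic form defined by $U_B$ in \eqref{eq:UB} on constant vectors and then perturb in the single direction $\boldsymbol{e}_1$. The second condition in \eqref{eq:pdc}, $\boldsymbol{y}^\top U_B\boldsymbol{y}\ge \beta(y_1-y_2)^2$, has a right-hand side that vanishes whenever $y_1=y_2$. So $U_B$ is positive semi-definite on the hyperplane $\{y_1=y_2\}$. We will show that $\boldsymbol{1}_{k+1}^\top U_B\boldsymbol{1}_{k+1}=0$. Since $\boldsymbol{1}_{k+1}$ lies in that hyperplane, it then minimizes the quadratic form there, and the first-order optimality condition will give the identity.

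\emph{Step 1: evaluate the form on constants.} Write $\boldsymbol{1}=\boldsymbol{1}_{k+1}$. The two $G_B$-blocks in \eqref{eq:UB} both contribute $\boldsymbol{1}_k^\top G_B\boldsymbol{1}_k$ with opposite signs, so they cancel. What remains is
\[
\boldsymbol{1}^\top U_B\boldsymbol{1}=(\boldsymbol{1}^\top\boldsymbol{\nu})(\boldsymbol{1}^\top\boldsymbol{B}^{(k)}).
\]
The sum $\sum_i\nu_i$ telescopes to $\mu_0-\mu_{k-1}+\sum_{i=1}^{k-1}(\mu_i-\mu_{i-1})=0$. Hence $\boldsymbol{1}^\top U_B\boldsymbol{1}=0$.

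\emph{Step 2: perturb along $\boldsymbol{e}_1$.} Take $\boldsymbol{y}=\boldsymbol{1}+t\boldsymbol{e}_1$. Then $y_1-y_2=t$, and \eqref{eq:pdc} gives
\[
2t\,\boldsymbol{e}_1^\top U_B\boldsymbol{1}+t^2(U_B)_{11}\ge\beta t^2\quad\text{for all } t\in\mathbb{R}.
\]
Letting $t\to0^\pm$ forces $\boldsymbol{e}_1^\top U_B\boldsymbol{1}=0$. (Alternatively, one can take $\boldsymbol{y}=\boldsymbol{1}+t\boldsymbol{e}_j$ for $j\ge3$, which keeps $y_1-y_2=0$ and yields $\boldsymbol{e}_j^\top U_B\boldsymbol{1}=0$; these are not needed here.)

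\emph{Step 3: compute the first row sum.} The symmetric part of \eqref{eq:UB} contributes
\[
\tfrac12\bigl(\nu_0\,\boldsymbol{1}^\top\boldsymbol{B}^{(k)}+B_0^{(k)}\,\boldsymbol{1}^\top\boldsymbol{\nu}\bigr)=\tfrac12\nu_0=\tfrac12\mu_0,
\]
using the normalization \eqref{cond:normalization} and $\sum_i\nu_i=0$. The first $G_B$-block contributes $-\boldsymbol{e}_1^\top G_B\boldsymbol{1}_k$, and the second block contributes nothing to row $1$. Therefore
\[
\boldsymbol{e}_1^\top G_B\boldsymbol{1}_k=\tfrac12\mu_0.
\]

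\emph{Step 4: recover the full sum $\boldsymbol{1}_k^\top G_B\boldsymbol{1}_k$.} The row-$1$ relation alone only fixes one row sum of $G_B$, so I would use Step 1 more carefully. The cleanest route is to bring in the remaining rows $j=2,\dots,k+1$.

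For $j\ge3$, the directional argument of Step 2 applies because the right-hand side is unchanged to first order. This gives
\[
\tfrac12\nu_{j-1}+\bigl(G_B\boldsymbol{1}_k\bigr)_{j-2}-\bigl(G_B\boldsymbol{1}_k\bigr)_{j-1}=0,
\]
with the convention that out-of-range entries are zero. It tells us that $g_m:=(G_B\boldsymbol{1}_k)_m$ satisfies
\[
g_{m}-g_{m+1}=\tfrac12\nu_{m+1}=\tfrac12(\mu_{m+1}-\mu_m).
\]
Row $2$ follows from rows $1,3,\dots,k+1$ together with $\boldsymbol{1}^\top U_B\boldsymbol{1}=0$.

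Starting from $g_1=\tfrac12\mu_0$, I then expect to identify $g_m$ explicitly. Summing the recursion should give $\sum_m g_m=\tfrac12\sum_i\mu_i$, and this is exactly \eqref{eq:lgnu1:1}.

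\emph{Main obstacle.} The main difficulty is keeping the index bookkeeping of this telescoping consistent. If the recursion does not close as expected, I would fall back on a differentiable argument. Define $\phi(\boldsymbol{y})=\boldsymbol{y}^\top U_B\boldsymbol{y}-\beta(y_1-y_2)^2\ge0$. Since $\phi(\boldsymbol{1})=0$, the point $\boldsymbol{1}$ is a global minimizer of $\phi$, so $\nabla\phi(\boldsymbol{1})=2U_B\boldsymbol{1}=0$. This gives all row equations at once.

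Then I would pair $U_B\boldsymbol{1}=0$ with the vector $\boldsymbol{z}=[k,k-1,\dots,0]^\top$, i.e., compute $\boldsymbol{z}^\top U_B\boldsymbol{1}=0$. In this pairing the $G_B$-blocks produce exactly $\boldsymbol{1}_k^\top G_B\boldsymbol{1}_k$ with a minus sign. The remaining terms, via $\boldsymbol{z}^\top\boldsymbol{\nu}=\sum_i\mu_i$ and \eqref{cond:normalization}, give $\tfrac12\sum_i\mu_i$. This yields \eqref{eq:lgnu1:1}.
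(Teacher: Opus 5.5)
Your fallback argument is a complete and correct proof. It shares the paper's key step, $U_B\boldsymbol{1}_{k+1}=\boldsymbol{0}_{k+1}$, and differs only in how the sum is extracted at the end.

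\textbf{Obtaining $U_B\boldsymbol{1}_{k+1}=\boldsymbol{0}_{k+1}$.} The paper gets this in one line. Since $\beta(y_1-y_2)^2\ge 0$, condition \eqref{eq:pdc} makes $U_B$ positive semi-definite on all of $\mathbb{R}^{k+1}$, not merely on the hyperplane $\{y_1=y_2\}$ as you put it. So $\boldsymbol{1}_{k+1}^\top U_B\boldsymbol{1}_{k+1}=0$ already forces $U_B\boldsymbol{1}_{k+1}=\boldsymbol{0}_{k+1}$. Your minimizer argument for $\phi$ is the same fact, and your directional perturbations in Steps 2 and 4 are a longhand version of it.

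\textbf{Extracting the sum.} The paper reads off the rows of $U_B\boldsymbol{1}_{k+1}=\boldsymbol{0}_{k+1}$. This gives $g_1=\tfrac12\mu_0$ and $g_{j+1}-g_j=\tfrac12\nu_j$, hence the stronger vector identity $G_B\boldsymbol{1}_k=\tfrac12\boldsymbol{\mu}$, which it then sums. You instead pair with $\boldsymbol{z}=[k,k-1,\ldots,0]^\top$. Because consecutive entries of $\boldsymbol{z}$ differ by one, the two shifted $G_B$-blocks collapse to $-\boldsymbol{1}_k^\top G_B\boldsymbol{1}_k$. Moreover, $\boldsymbol{z}^\top\boldsymbol{\nu}=\sum_i(k-i)\nu_i=M'(1;\boldsymbol{\nu})$, and differentiating $M(z;\boldsymbol{\nu})=(z-1)M(z;\boldsymbol{\mu})$ at $z=1$ gives $M(1;\boldsymbol{\mu})=\sum_i\mu_i$. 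Your route yields the scalar identity without solving any recurrence. The paper's route costs a little more but identifies $G_B\boldsymbol{1}_k$ itself.

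\textbf{Error in your primary route.} Step 4 is mis-indexed as written. For $2\le j\le k+1$, row $j$ of $U_B\boldsymbol{1}_{k+1}$ is $\tfrac12\nu_{j-1}+g_{j-1}-g_j$, with the convention $g_{k+1}=0$. It is not $\tfrac12\nu_{j-1}+g_{j-2}-g_{j-1}$. The correct recurrence is therefore $g_{m+1}-g_m=\tfrac12\nu_m$ for $1\le m\le k-1$. With $g_1=\tfrac12\mu_0$ this telescopes to $g_m=\tfrac12\mu_{m-1}$, which is consistent with the last row, $g_k=-\tfrac12\nu_k$. Your version, $g_m-g_{m+1}=\tfrac12\nu_{m+1}$, has the wrong sign and an index shift, and it would not produce these values. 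Once corrected, that route is exactly the paper's proof.
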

\begin{proof}
Multiplying \eqref{eq:UB} from the left by $\boldsymbol{1}_{k+1}^\top$ and right by $\boldsymbol{1}_{k+1}$ yields
\[
\boldsymbol{1}_{k+1}^\top U_B \boldsymbol{1}_{k+1} = 
\frac{1}{2}
\boldsymbol{1}_{k+1}^\top \left( \boldsymbol{\nu} (\boldsymbol{B}^{(k)})^\top  + \boldsymbol{B}^{(k)} \boldsymbol{\nu}^\top \right )\boldsymbol{1}_{k+1} - 
\boldsymbol{1}_{k}^\top G_B \boldsymbol{1}_{k}
+
\boldsymbol{1}_{k}^\top G_B \boldsymbol{1}_{k} = 0,
\]
where we have used the property $\boldsymbol{1}_{k+1}^\top \boldsymbol{\nu}  = 0$ that follows from the definition \eqref{eq:multiplier2}. Since $U_B$ is positive semi-definite, it follows that $U_B \boldsymbol{1}_{k+1} = \boldsymbol{0}_{k+1}$.
Then multiplying \eqref{eq:UB} on the right by $\boldsymbol{1}_{k+1}$ yields
\begin{equation}
\label{eq:lgnu1:2}
U_B \boldsymbol{1}_{k+1} = 
\frac{1}{2} \boldsymbol{\nu} (\boldsymbol{B}^{(k)})^\top \boldsymbol{1}_{k+1} 
- \begin{bmatrix} \boldsymbol{g} \\ 0 \end{bmatrix}
+ \begin{bmatrix} 0 \\ \boldsymbol{g} \end{bmatrix}
= \boldsymbol{0}_{k+1},
\end{equation}
where $\boldsymbol{g} \coloneqq G_B \boldsymbol{1}_k \in \mathbb{R}^k$. Writing $\boldsymbol{g} = [g_1,\ldots,g_k]^\top$, by \eqref{cond:normalization}, we have $(\boldsymbol{B}^{(k)})^\top \boldsymbol{1}_{k+1}=1$, and obtain from \eqref{eq:lgnu1:2} the elementwise recurrence: $g_1 = \nu_0/2 = \mu_0/2$ and $g_{j+1} - g_j = \nu_j/2$ for $1 \le j \le k-1$. 
By summing this recurrence, we readily obtain
\[
g_i = \frac{1}{2} \sum_{j=0}^{i-1} \nu_j = \frac{\mu_{i-1}}{2} \quad  \text{for}\quad i=1,\ldots,k. 
\]
That is, $\boldsymbol{g} = \frac{1}{2} [\mu_0, \mu_1, \dots, \mu_{k-1}]^\top$. 
Therefore $\boldsymbol{1}_k^\top G_B \boldsymbol{1}_k
=  \boldsymbol{1}_k^\top \boldsymbol{g}
= \frac{1}{2} \sum_{i=0}^{k-1} \mu_i$.
\end{proof}

\begin{remark}[Consistency of $E_G^n$ with the original energy]\label{rem:consistency}
Assume that $u(t)$ is sufficiently smooth in time, and let $u^{n-i}=u(t_{n-i})$. Then $\delta u^{n-j}=u(t_{n-j})-u(t_{n-j-1})=\mathcal O(\tau)$ for $j=0,\dots,k-2$,
so that $\boldsymbol v_n=\mathcal O(\tau)$. It follows from \eqref{eq:EG} that
\[
E_G^n
=
(\boldsymbol{u}_n, \mathcal{L}\boldsymbol{u}_n)_{G_B}
+\sum_{i=0}^{k-1}\mu_i (F(u^{n-i}),1)
+\mathcal O(\tau).
\]
As $\tau\to0$ with $t_n$ fixed, we have $\boldsymbol{u}_n\to u(t_n)\boldsymbol{1}_k$ and $u^{n-i}\to u(t_n)$. Hence, by \cref{lem:1gnu1},
\[
E_G^n \to
(\boldsymbol{1}_k^\top G_B \boldsymbol{1}_k)(u(t_n),\mathcal{L}u(t_n))
+\sum_{i=0}^{k-1}\mu_i (F(u(t_n)),1)
=E[u(t_n)].
\]
Thus, for any $G_B$ satisfying \eqref{eq:lgnu1:1}, the modified
energy $E_G^n$ is consistent with the continuous energy $E[u]$.
\end{remark}

% \subsection{Modified energy dissipation theorem} \label{subsec:main-result}

We are now ready to state and prove our main result on the modified energy dissipation of the IMEX-LMMs for gradient flows.

\begin{theorem}
\label{thm:main}
% {\blue Assume that the nonlinearity in \eqref{model:phase-field-eq} is globally Lipschitz with constant $\ell_f$ as in \eqref{eq:lip-f}, and that \eqref{eq:l2-control} holds for some constants $\zeta>0$ and $0<\eta\le 1$.}
Assume that the gradient flow \eqref{model:phase-field-eq} has a Lipschitz continuous nonlinearity satisfying \eqref{eq:lip-f}, and that there exist constants $\zeta>0$ and $0<\eta\le 1$ such that \eqref{eq:l2-control} holds.
Consider the $k$-step IMEX-LMM \eqref{scheme:lmm-diff-form}, and let $\boldsymbol{a}^{(k)}$, $\boldsymbol{B}^{(k)}$, and $\hat{\boldsymbol{b}}^{(k)}$ be the coefficient vectors defined in \eqref{eq:coef-vecs}. Let the multiplier in \eqref{eq:multiplier1} be specified by the vectors $\boldsymbol{\mu}$ and $\boldsymbol{\nu}$ defined in \eqref{eq:multiplier2}. 

Let $M(z;\boldsymbol{a}^{(k)})$, $M(z;\boldsymbol{B}^{(k)})$ and $M(z;\boldsymbol{\mu})$ be the generating polynomials associated with $\boldsymbol{a}^{(k)}$, $\boldsymbol{B}^{(k)}$ and $\boldsymbol{\mu}$, defined by \eqref{eq:genpoly}. Suppose that $\deg M(z;\boldsymbol{a}^{(k)})=\deg M(z;\boldsymbol{\mu}) = k-1$ and $\deg M(z;\boldsymbol{B}^{(k)}) = k$, 
that $M(z;\boldsymbol{a}^{(k)})$ and $M(z;\boldsymbol{B}^{(k)})$ are relatively prime to $M(z;\boldsymbol{\mu})$ and $M(z;\boldsymbol{\nu})$, respectively, and that $
M(z;\boldsymbol{\mu})\neq 0$ for $|z|>1$. 
Assume further that there exist constants $\alpha,\beta>0$ such that for any $\theta\in[0,\pi]$,
\begin{equation}
\label{ieq:positivity}
    \operatorname{Re}\bigl\{
M(\mathrm{e}^{\mathrm{i}\theta};\boldsymbol{a}^{(k)})
M(\mathrm{e}^{-\mathrm{i}\theta};\boldsymbol{\mu})
\bigr\}\ge \alpha
,\quad
\operatorname{Re}\bigl\{
M(\mathrm{e}^{\mathrm{i}\theta};\boldsymbol{B}^{(k)})
M(\mathrm{e}^{-\mathrm{i}\theta};\boldsymbol{\nu})
\bigr\}\ge 2\beta(1-\cos\theta).
\end{equation}
Then there exist positive definite matrices
$G_a\in\mathbb{R}^{(k-1)\times(k-1)}$ and
$G_B\in\mathbb{R}^{k\times k}$ satisfying the positive-definiteness conditions
\eqref{eq:pdc} in \cref{lemma:multiplier-LMM-energy}. For these matrices, the modified energy $E_G^n$
defined by \eqref{eq:EG} satisfies $E_G^{n+1}\le E_G^n$ 
whenever  $0<\tau\leq \tau_{\max}$, where $\tau_{\max}$ is defined in \eqref{eq:tau-max}. 

Moreover, under the additional assumptions of \cref{lem:1}, the modified energy $E_G^n$ is dissipative and nonnegative under restriction $0<\tau\le \min\{\tau_{\max},\bar{\tau}_{\max}\}$, where $\bar{\tau}_{\max}$ is defined in \eqref{eq:bar-tau-max}.

% Moreover, under the additional assumptions of \cref{lem:1}, the modified energy 
% is uniformly nonnegative. More precisely, condition {\rm (i)} of \cref{lem:1} yields 
% uniform non-negativity without any additional time-step restriction, and hence
% $E_G^n$ is both dissipative and uniformly nonnegative whenever
% $0<\tau\leq \tau_{\max}$. If condition {\rm (ii)} of \cref{lem:1} holds, then $E_G^n$ is 
% both dissipative and uniformly nonnegative whenever $0<\tau\le \min\{\tau_{\max},\bar{\tau}_{\max}\}$, where $\bar{\tau}_{\max}$ is defined in \eqref{eq:bar-tau-max}.

% In addition, $E_G^n$ is consistent with the original energy.
\end{theorem}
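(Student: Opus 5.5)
The plan is to obtain both matrices $G_a$ and $G_B$ from \cref{thm:equivalence}, using the implication (iii) $\Rightarrow$ (v) once for each. Conditions \eqref{eq:pdc} then follow directly from the matrix identity \eqref{eq:equi-thm:cond5}. The dissipation claim is then \cref{lemma:multiplier-LMM-energy}, and the nonnegativity claim is \cref{lem:1}.

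\emph{Construction of $G_a$.} Set $r=k-1$, $\rho(z)=M(z;\boldsymbol{a}^{(k)})$, $\sigma(z)=M(z;\boldsymbol{\mu})$, $\lambda(z)=z^{r}$ and $\gamma=\alpha$.
\begin{itemize}
\item Since $M(z;\boldsymbol{s})=\sum_i s_i z^{r-i}$, the coefficient vector in the convention of \cref{thm:equivalence} is exactly $\boldsymbol{s}$. Thus $\boldsymbol\rho=\boldsymbol a^{(k)}$, $\boldsymbol\sigma=\boldsymbol\mu$ and $\boldsymbol\lambda=\boldsymbol e_1$.
\item The degree assumptions give $\rho_r\sigma_r\neq0$, and coprimality is assumed.
\item Condition (iii)(a) is the assumed Schur property of $M(z;\boldsymbol\mu)$.
\item For (iii)(b): because the coefficients are real, $\operatorname{Re}\{\rho(\mathrm e^{-\mathrm i\theta})\sigma(\mathrm e^{\mathrm i\theta})\}=\operatorname{Re}\{\rho(\mathrm e^{\mathrm i\theta})\sigma(\mathrm e^{-\mathrm i\theta})\}$. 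So the first inequality in \eqref{ieq:positivity} on $[0,\pi]$ extends to the whole unit circle, and it reads $\operatorname{Re}\{\rho\bar\sigma\}\ge\alpha=\gamma|\lambda|^2$.
\item Condition (iii)(c) is vacuous. A root $z_0$ of $\sigma$ on $|z|=1$ would give $0=\operatorname{Re}\{\rho(z_0)\sigma(\bar z_0)\}\ge\alpha>0$, which is impossible.
\end{itemize}
Condition (v) now yields $\boldsymbol q$ and a positive definite $G$. Put $G_a=G/2$. Comparing with \eqref{eq:Ua} gives
\[
U_a=\tfrac12\boldsymbol q\boldsymbol q^\top+\alpha\,\boldsymbol e_1\boldsymbol e_1^\top ,
\]
hence $\boldsymbol x^\top U_a\boldsymbol x\ge\alpha x_1^2$.

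\emph{Construction of $G_B$.} Set $r=k$, $\rho(z)=M(z;\boldsymbol B^{(k)})$, $\sigma(z)=M(z;\boldsymbol\nu)=(z-1)M(z;\boldsymbol\mu)$, $\lambda(z)=z^k-z^{k-1}$ and $\gamma=\beta$.
\begin{itemize}
\item Here $\boldsymbol\lambda=\boldsymbol e_1-\boldsymbol e_2$, and $|\lambda(\mathrm e^{\mathrm i\theta})|^2=2-2\cos\theta$.
\item $\nu_0=\mu_0\neq0$, so $\deg\sigma=k$ and $\rho_r\sigma_r\neq0$. Coprimality is assumed.
\item Condition (iii)(a) holds because the only new root, $z=1$, is not in $|z|>1$.
\item Condition (iii)(b) is the second inequality of \eqref{ieq:positivity}, extended to the full circle by conjugation as above.
\end{itemize}
This is the delicate case, since (iii)(c) is no longer vacuous. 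Any unit-circle root $z_0\neq1$ of $\sigma$ is excluded, because it would give $0\ge\beta(2-2\cos\theta_0)>0$. The root $z_0=1$ is simple, because $\sigma'(1)=M(1;\boldsymbol\mu)=\sum_i\mu_i=1\neq0$. Its residue condition reads
\[
\frac{\rho(1)}{\sigma'(1)}\cdot\bar 1=\frac{\sum_i B_i^{(k)}}{1}=1>0
\]
by the normalization \eqref{cond:normalization}.

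Condition (v) then provides $\boldsymbol q$ and a positive definite $G$. Put $G_B=G/2$. Comparing with \eqref{eq:UB} gives
\[
U_B=\tfrac12\boldsymbol q\boldsymbol q^\top+\beta(\boldsymbol e_1-\boldsymbol e_2)(\boldsymbol e_1-\boldsymbol e_2)^\top ,
\]
hence $\boldsymbol y^\top U_B\boldsymbol y\ge\beta(y_1-y_2)^2$.

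With $G_a$, $G_B$, $\alpha$ and $\beta$ in hand, \cref{lemma:multiplier-LMM-energy} gives $E_G^{n+1}\le E_G^n$ for $0<\tau\le\tau_{\max}$. Under the extra hypotheses, \cref{lem:1} gives $E_G^n\ge0$ for $\tau\le\bar\tau_{\max}$, so both properties hold for $\tau\le\min\{\tau_{\max},\bar\tau_{\max}\}$.

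The step I expect to need the most care is verifying (iii)(c) for $\sigma=M(z;\boldsymbol\nu)$, which relies on the boundary root at $z=1$ and the normalizations $\sum_i\mu_i=\sum_i B^{(k)}_i=1$. Beyond that, the main risk is bookkeeping: matching the factor $2$ between \eqref{eq:equi-thm:cond5} and $U_a$, $U_B$, and matching the coefficient ordering convention.
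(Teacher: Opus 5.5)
Your proposal is correct and follows the paper's proof essentially step for step. You use the same two applications of (iii)$\Rightarrow$(v) in \cref{thm:equivalence}, with $(\rho,\sigma,\lambda,\gamma)=(M(z;\boldsymbol a^{(k)}),M(z;\boldsymbol\mu),z^{k-1},\alpha)$ and $(M(z;\boldsymbol B^{(k)}),M(z;\boldsymbol\nu),z^k-z^{k-1},\beta)$, verify (iii)(c) through $\sigma'(1)=M(1;\boldsymbol\mu)=1$ and $\rho(1)=1$, and take $G_a=G/2$, $G_B=G/2$, giving $U_a=\tfrac12\boldsymbol q\boldsymbol q^\top+\alpha\boldsymbol e_1\boldsymbol e_1^\top$ and $U_B=\tfrac12\boldsymbol q\boldsymbol q^\top+\beta\boldsymbol\lambda\boldsymbol\lambda^\top$, all exactly as the paper does.
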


% {\blue 
% The coprimality assumption excludes the simple multiplier $\boldsymbol\mu=[1,0,\ldots,0]^\top$ for IMEX-BDF schemes, in which case $M(z;\boldsymbol B^{(k)})=z^k$ and $M(z;\boldsymbol\nu)=(z-1)z^{k-1}$ share the common divisor $z^{k-1}$. This monomial degeneracy can be removed by canceling the common divisor in \cref{thm:equivalence}; see \cref{rem:degenerate-case} for details.
% }

% {\blue
% Combined with \cref{lem:1} below, this implies that the modified energy $E_G^n$ is both nonnegative and monotonically decaying, provided that $\tau\leq\min(\tau_{\max},\bar{\tau}_{\max})$, where $\bar{\tau}_{\max}$ is given in \eqref{eq:bar-tau-max}. When the first case of \cref{lem:1} holds (i.e., $\hat{c}_i\ge 2\bar{c}_i$ for all $1\le i\le k-1$), no additional restriction beyond $\tau\leq\tau_{\max}$ is required.
% }

\begin{proof}
% {\blue By \cref{lemma:multiplier-LMM-energy}, it suffices to verify the positive-definiteness conditions \eqref{eq:pdc}. We apply \cref{thm:equivalence} twice: once with $(\rho,\sigma,\lambda,\gamma)=(M(\cdot;\boldsymbol{a}^{(k)}),M(\cdot;\boldsymbol{\mu}),z^{k-1},\alpha)$ to handle $U_a$, and once with $(\rho,\sigma,\lambda,\gamma)=(M(\cdot;\boldsymbol{B}^{(k)}),M(\cdot;\boldsymbol{\nu}),z^k-z^{k-1},\beta)$ to handle $U_B$. The specific form $\lambda(z)=z^k-z^{k-1}$ is chosen so that $|\lambda(\mathrm{e}^{\mathrm{i}\theta})|^2 = 2(1-\cos\theta)$, matching the assumed bound on the $U_B$-side.}
We prove the dissipation of $E_G^n$ by verifying the positive-definiteness conditions \eqref{eq:pdc} in \cref{lemma:multiplier-LMM-energy}. This is done by applying \cref{thm:equivalence} to $U_a$ and $U_B$, respectively.

For $U_a$, let $\rho(z)=M(z;\boldsymbol a^{(k)})$,
$\sigma(z)=M(z;\boldsymbol\mu)$, $\lambda(z)=z^{k-1}$, and
$\gamma=\alpha$. The degree and coprimality assumptions give the corresponding
assumptions of \cref{thm:equivalence}. Moreover, $\sigma(z)\neq 0$ for
$|z|>1$ according to the assumption. Since the coefficients are real, the inequality assumed for
$\theta\in[0,\pi]$ yields condition (iii)(b) of \cref{thm:equivalence} on the
whole unit circle. If $\sigma(z_0)=0$ for some $|z_0|=1$, then
\[
0=\operatorname{Re}\{\rho(z_0)\sigma(\bar z_0)\}
\ge \alpha |\lambda(z_0)|^2=\alpha,
\]
which is impossible. Therefore, condition (iii) of \cref{thm:equivalence} holds.
Hence, by \textnormal{(iii)} $\Longleftrightarrow$ \textnormal{(v)}, there exist a positive definite matrix $G$ and a vector $\boldsymbol{q}$ such that \eqref{eq:equi-thm:cond5} is satisfied. Setting $G_a=G/2$ and substituting \eqref{eq:equi-thm:cond5} into \eqref{eq:Ua}, we obtain $
U_a=\frac12\boldsymbol{q}\boldsymbol{q}^\top+\alpha \boldsymbol{e}_1\boldsymbol{e}_1^\top$, where $\boldsymbol{e}_1=[1,0,\ldots,0]^\top \in \mathbb{R}^k$, 
and then the $U_a$-condition in \eqref{eq:pdc} follows.

For $U_B$, let $\rho(z)=M(z;\boldsymbol{B}^{(k)})$, $\sigma(z)=M(z;\boldsymbol{\nu})=(z-1)M(z;\boldsymbol{\mu})$, $\lambda(z)=z^k-z^{k-1}$, 
and $\gamma=\beta$.
We verify condition \textnormal{(iii)} of \cref{thm:equivalence}. Condition \textnormal{(iii)(a)} follows from $M(z;\boldsymbol{\mu})\neq 0$ for $|z|>1$. On $|z|=1$, writing $z=\mathrm{e}^{\mathrm{i}\theta}$ gives
\[
|\lambda(z)|^2=|z^k-z^{k-1}|^2=|z-1|^2=2(1-\cos\theta),
\]
and thus the assumed inequality gives condition \textnormal{(iii)(b)}.
It remains to verify condition \textnormal{(iii)(c)}. If $z_*$ is a root of $\sigma(z)$ on $|z|=1$ and $z_*\neq 1$, then 
$|\lambda(z_*)|^2=|z_*-1|^2>0$. Condition
(iii)(b) would then imply
\[
0=\operatorname{Re}\{\rho(z_*)\sigma(\bar z_*)\}
\ge \beta|\lambda(z_*)|^2>0,
\]
a contradiction. Hence, the only possible unit-circle root of $\sigma(z)$ is $z_*=1$.
Since
\[
\sigma(z)=(z-1)M(z;\boldsymbol\mu) \quad \text{and}\quad 
M(1;\boldsymbol\mu)=\sum_{i=0}^{k-1}\mu_i=1,
\]
this root is simple and $\sigma'(1)=1$. By \eqref{cond:normalization},
$\rho(1)=M(1;\boldsymbol B^{(k)})=1$, and hence
\[
\frac{\rho(1)}{\sigma'(1)}=1>0.
\]
Thus, condition (iii)(c) also holds.
Applying again \textnormal{(iii)} $\Longleftrightarrow$ \textnormal{(v)}, we obtain a positive definite matrix $G$ and a vector $\boldsymbol{q}$ such that \eqref{eq:equi-thm:cond5} holds. Setting $G_B=G/2$ and substituting \eqref{eq:equi-thm:cond5} into  \eqref{eq:UB}, we get
\[
U_B=\frac12\boldsymbol{q}\boldsymbol{q}^\top+\beta \boldsymbol{\lambda}\boldsymbol{\lambda}^\top,
\]
where $\boldsymbol{\lambda}=[1,-1,0,\ldots,0]^\top\in\mathbb{R}^{k+1}$, 
which yields the $U_B$-condition in \eqref{eq:pdc}.

Therefore, all assumptions of \cref{lemma:multiplier-LMM-energy} are satisfied, and the conclusion $E_G^{n+1}\le E_G^n$
under the time-step restriction \eqref{eq:tau-max} follows from \cref{lemma:multiplier-LMM-energy}.
\end{proof}

\begin{remark}
The coprimality assumption in \cref{thm:main} excludes some useful degenerate
cases. For example, for the $k$-step IMEX-BDF scheme with the simple multiplier
$\boldsymbol{\mu}=[1,0,\ldots,0]^\top$ (see \cite{QuanWangWangXuPre}), we have
\[
M(z;\boldsymbol B^{(k)})=z^k\quad \text{and}\quad 
M(z;\boldsymbol\nu)=(z-1)z^{k-1}.
\]
Thus, the pair $\bigl(M(z;\boldsymbol B^{(k)}),M(z;\boldsymbol\nu)\bigr)$ is not
coprime, since the two polynomials share the common divisor $z^{k-1}$. This case is
therefore not covered by the strict statement of \cref{thm:main}.
Nevertheless, in the verification of the $U_B$-condition, the same common divisor also appears in
\[
\lambda(z)=z^k-z^{k-1}=z^{k-1}(z-1).
\]
After ignoring the common divisor $z^{k-1}$, one obtains the reduced triple
\[
\rho(z)=z,\qquad \sigma(z)=z-1,\qquad \lambda(z)=z-1,
\]
to which \cref{thm:equivalence} applies. In this reduced case, one may take
\[
\beta=\frac12,\qquad
G_B=\frac12\operatorname{diag}(1,0,\ldots,0)\succeq 0,
\]
which gives
\[
(\boldsymbol u_n,\mathcal{L}\boldsymbol u_n)_{G_B}
=\frac{1}{2}(u^n,\mathcal{L}u^n).
\]
Thus, the required $U_B$ inequality is still obtained, although the embedded matrix
$G_B$ is only positive semi-definite rather than positive definite. For this reason,
we keep \cref{thm:main} in the strict coprime and positive definite form, and treat
such monomial degenerate cases separately.

More generally, if
\[
\gcd\bigl(M(z;\boldsymbol B^{(k)}),M(z;\boldsymbol\nu)\bigr)=z^m
\]
for some $1\le m\le k-1$, where $\gcd(p,q)$ denotes the greatest common divisor of $p$ and $q$, then the same reduction may be applied after ignoring the common divisor $z^m$, provided that the reduced polynomial triple satisfies the
assumptions of \cref{thm:equivalence}.
\end{remark}

% {\blue 
% \begin{remark}\label{rem:degenerate-case}
% The coprimality assumption in \Cref{thm:main} excludes the simple multiplier $\boldsymbol{\mu}=[1,0,\ldots,0]^\top$ for the $k$-step IMEX-BDF scheme, {\blue since $M(z;\boldsymbol{B}^{(k)})=z^k$ and $M(z;\boldsymbol{\nu})=(z-1)z^{k-1}$ share the common divisor $z^{k-1}$.}
% % since $M(z;\boldsymbol{B}^{(k)})=z^k$ and  $M(z;\boldsymbol{\nu})=(z-1)z^{k-1}$, and hence $(M(z;\boldsymbol{B}^{(k)})$ and $M(z;\boldsymbol{\nu}))$ {\red are relatively prime}.
% This is a degenerate case of \Cref{thm:main}. Indeed, in the proof of
% \Cref{thm:main}, the polynomial $\lambda(z)=z^{k-1}(z-1)$ contains the same common divisor. After canceling $z^{k-1}$, one obtains the reduced
% triple
% \[
% \rho(z)=z,\quad \sigma(z)=z-1,\quad \lambda(z)=z-1,
% \]
% to which \Cref{thm:equivalence} applies. In particular, one may take
% \[
% \beta=\frac{1}{2},
% \qquad
% G_B=\frac12\,\operatorname{diag}(1,0,\ldots,0)\succeq 0,
% \]
% so that
% \[
% \boldsymbol{u}_n^\top G_B( \mathcal{L}\boldsymbol{u}_n)=\frac{1}{2}\,(\boldsymbol{u}^n,\mathcal{L}\boldsymbol{u}^n).
% \]
% More generally, the same reduction applies whenever
% \[
% \gcd(M(z;\boldsymbol{B}^{(k)}),M(z;\boldsymbol{\nu}))=z^m
% \]
% for some $1\le m\le k-1$, where $\gcd(p,q)$ denotes the greatest common divisor of $p$ and $q$. We therefore keep \Cref{thm:main} in the strict coprime form and treat such monomial degenerate cases separately.
% \end{remark}
% }

\section{Feasibility problems and high-order IMEX-LMMs}\label{sec:feasibility}

\Cref{thm:main} ensures the preservation of modified energy dissipation by degree, Schur stability, coprimality, and positivity conditions on the generating polynomials of the scheme and the multiplier.
In this section, we first  derive an affine representation \eqref{eq:matrix-parameterization} of the coefficients of $k$th-order IMEX-LMMs satisfying \eqref{cond:order-lmm}--\eqref{cond:normalization}, and then impose the conditions of \Cref{thm:main} on this representation to formulate a feasibility problem over the coefficients of the scheme and the multiplier.
This feasibility problem is used to find multipliers for establishing the modified energy dissipation of IMEX-BDF6 and IMEX-WSBDF7 and to construct a new energy-dissipative IMEX-LMM8.

\subsection{Feasibility problems}
\label{subsec:parameterization}

We use the implicit coefficient vector
\[
    \boldsymbol B^{(k)}=[B_0^{(k)},\ldots,B_k^{(k)}]^\top
\]
as free parameters under the normalization restriction \eqref{cond:normalization}.
For a prescribed $\boldsymbol B^{(k)}$, the order conditions \eqref{cond:order-lmm} determine the remaining coefficients, $\boldsymbol A^{(k)}$ and $\hat{\boldsymbol B}^{(k)}$, in the IMEX-LMM \eqref{eq:model-lmm2}. More precisely, the equations in \eqref{cond:order-lmm} give
\[
    W_1\boldsymbol A^{(k)}
    =
    \begin{bmatrix}
        0\\
        D_k^{-1}[I_k,\boldsymbol{0}_{k}]W_1\boldsymbol B^{(k)}
    \end{bmatrix},
    \quad
    W_3\hat{\boldsymbol B}^{(k)}
    =
    [I_k,\boldsymbol{0}_{k}]W_1\boldsymbol B^{(k)} ,
\]
where
\[
D_k=\operatorname{diag}\braB{1,\frac12,\ldots,\frac1k},
\quad
W_1=W(0,-1,\ldots,-k)^\top,\quad
W_3=W(-1,\ldots,-k)^\top,
\]
and $W(x_0,\ldots,x_n)_{i+1,j+1}=x_i^j$ for $0\le i,j\le n$.
Using \eqref{def:coefs-a-b-c}, we obtain
\begin{equation}
\label{eq:matrix-parameterization}
\begin{aligned}
\boldsymbol a^{(k)}
&=
[E_k,\boldsymbol{0}_{k}]W_1^{-1}
\begin{bmatrix}
0\\
D_k^{-1}[I_k,\boldsymbol{0}_{k}]W_1\boldsymbol B^{(k)}
\end{bmatrix},\;
\hat{\boldsymbol b}^{(k)}
=
E_kW_3^{-1}[I_k,\boldsymbol{0}_{k}]W_1\boldsymbol B^{(k)}
-\boldsymbol 1_k .
\end{aligned}
\end{equation}
Here, $E_k\in\mathbb R^{k\times k}$ is the lower triangular matrix with $(E_k)_{ij}=1$ for $i\ge j$ and $(E_k)_{ij}=0$ otherwise. Thus, $\boldsymbol a^{(k)}$ and $\hat{\boldsymbol b}^{(k)}$, and equivalently $\boldsymbol A^{(k)}$ and $\hat{\boldsymbol B}^{(k)}$, are determined linearly by $\boldsymbol B^{(k)}$.

Define
\[
\begin{aligned}
    P_a(\cos\theta;\boldsymbol B^{(k)},\boldsymbol\mu) &
    \coloneqq
    \operatorname{Re}\bigl\{
    M(\mathrm e^{\mathrm i\theta};\boldsymbol a^{(k)})
    M(\mathrm e^{-\mathrm i\theta};\boldsymbol\mu)
    \bigr\} \qquad\text{for $\theta\in [0,\pi]$,} \\
    P_B(\cos\theta;\boldsymbol B^{(k)},\boldsymbol\mu) &
    \coloneqq
    \frac{
    \operatorname{Re}\bigl\{
    M(\mathrm e^{\mathrm i\theta};\boldsymbol B^{(k)})
    M(\mathrm e^{-\mathrm i\theta};\boldsymbol\nu)
    \bigr\}
    }{2(1-\cos\theta)} \qquad\text{for $\theta\in (0,\pi]$},
\end{aligned}
\]
where the coefficients $a_j^{(k)}$ in $P_a(\cos\theta;\boldsymbol B^{(k)},\boldsymbol\mu)$ are determined by \eqref{eq:matrix-parameterization}.
With the variable transformation $x=\cos\theta$, they are equivalent to
\[
\begin{aligned}
    P_a(x;\boldsymbol B^{(k)},\boldsymbol\mu) &
    =
    \sum_{j=0}^{k-1} a_j^{(k)}\mu_j
    +
    \sum_{m=1}^{k-1}
    \sum_{j=0}^{k-1-m}
    \bigl(
    a_j^{(k)}\mu_{j+m}
    +
    a_{j+m}^{(k)}\mu_j
    \bigr)T_m(x), \\
    P_B(x;\boldsymbol B^{(k)},\boldsymbol\mu) &
    =
    \frac{1}{2(1-x)}
    \biggl[
    \sum_{j=0}^{k} B_j^{(k)}\nu_j
    +
    \sum_{m=1}^{k}
    \sum_{j=0}^{k-m}
    \bigl(
    B_j^{(k)}\nu_{j+m}
    +
    B_{j+m}^{(k)}\nu_j
    \bigr)T_m(x)
    \biggr],
\end{aligned}
\]
where $T_m$ denotes the $m$-th Chebyshev polynomial of the first kind, defined by
\[
T_m(\cos\theta)=\cos(m\theta).
\]
The continuous extension at $x=1$ in the definition of $P_B$ is well-defined because $M(1;\boldsymbol\nu)=0$ (here, $M$ is the generating polynomial \eqref{eq:genpoly} associated with $\boldsymbol\nu$), so the numerator in the displayed quotient for $P_B$ vanishes at $x=1$. Thus, $P_a$ and $P_B$ are polynomials in $x$, and the positivity conditions \eqref{ieq:positivity} are equivalent to
\[
    P_a(x;\boldsymbol B^{(k)},\boldsymbol\mu)\ge \alpha,
    \qquad
    P_B(x;\boldsymbol B^{(k)},\boldsymbol\mu)\ge \beta,
    \qquad x\in[-1,1].
\]

The following feasibility problem formulates the four verifiable conditions in \Cref{thm:main} in terms of the scheme coefficients $\boldsymbol B^{(k)}$ and the multiplier coefficients $\boldsymbol \mu$.
\begin{enumerate}[label=\textnormal{(FP)}, ref=FP, leftmargin=*]
    \item \label{FP} Find $\boldsymbol B^{(k)}\in\mathbb{R}^{k+1}$, $\boldsymbol{\mu}\in\mathbb{R}^k$, and constants $\alpha,\beta>0$ such that $\sum_{i=0}^{k} B^{(k)}_i = 1$, $\sum_{i=0}^{k-1} \mu_i = 1$ and the following conditions hold:
    \begin{itemize}
        \item[\textnormal{(a)}] Degree condition: The generating polynomials attain their polynomial degrees
        \[
            \deg M(z; \boldsymbol{a}^{(k)}) = \deg M(z; \boldsymbol{\mu}) = k-1, \quad \deg M(z; \boldsymbol{B}^{(k)}) = k,
        \]
        which is equivalent to $a_0^{(k)}\ne 0$, $\mu_0\ne 0$, and $B_0^{(k)}\ne 0$.
        \item[\textnormal{(b)}] Schur stability: All roots of the multiplier polynomial $M(z;\boldsymbol{\mu})$ lie in the open unit disk (see, e.g., \cite{Gargantini1971}); equivalently, $M(z;\boldsymbol{\mu})$ is Schur stable.
        \item[\textnormal{(c)}] Coprimality: The pairs $(M(z;\boldsymbol{a}^{(k)}),M(z;\boldsymbol{\mu}))$ and 
        $(M(z;\boldsymbol{B}^{(k)}),M(z;\boldsymbol{\nu}))$ are coprime, respectively.
        \item[\textnormal{(d)}] Positivity: The following positivity inequalities hold for $x\in [-1, 1]$:
        \begin{equation}\label{eq:FP-poly}
            P_a(x;\boldsymbol B^{(k)},\boldsymbol\mu)\ge \alpha,
            \qquad
            P_B(x;\boldsymbol B^{(k)},\boldsymbol\mu)\ge \beta.
        \end{equation}
    \end{itemize}
     Here, $\boldsymbol{a}^{(k)}$ is determined by \eqref{eq:matrix-parameterization}, and $\boldsymbol{\nu}$ is determined by \eqref{eq:multiplier2}.
\end{enumerate}
\noindent Fixing the scheme gives the following reduced problem:
\begin{enumerate}[label=\textnormal{(FP-Multiplier)}, ref=FP-Multiplier, leftmargin=*]
    \item \label{FP-Multiplier} Given $\boldsymbol B^{(k)}\in\mathbb{R}^{k+1}$, find $\boldsymbol{\mu}\in\mathbb{R}^k$ and $\alpha,\beta>0$ satisfying \eqref{FP}.
\end{enumerate}

For fixed $\boldsymbol B^{(k)}$, the inequalities in \eqref{eq:FP-poly} are linear in $(\boldsymbol\mu,\alpha,\beta)$.
% For fixed $\boldsymbol\mu$, they are linear in $(\boldsymbol B^{(k)},\alpha,\beta)$.
Consequently, after discretizing $[0,\pi]$ by finite grids $\Theta_a\subset[0,\pi]$ and $\Theta_B\subset(0,\pi]$, the two inequalities in \eqref{eq:FP-poly} become finitely many linear constraints, leading to the following LP for \eqref{FP-Multiplier}:
\begin{equation}
\label{eq:LP-multiplier}
\begin{array}{ll}
\textnormal{maximize}_{\boldsymbol\mu,\gamma_\ast}
    & \gamma_\ast \\[0.5ex]
\textnormal{subject to}
    & P_a(\cos\theta;{\boldsymbol B}^{(k)},\boldsymbol\mu)\ge \gamma_\ast,
      \quad \theta\in\Theta_a,\\
    & P_B(\cos\theta;{\boldsymbol B}^{(k)},\boldsymbol\mu)\ge \gamma_\ast,
      \quad \theta\in\Theta_B,\\
    & 
      \gamma_\ast\ge0,\quad \sum_{i=0}^{k-1}\mu_i=1 .
\end{array}
\end{equation}
% For \eqref{FP-Scheme}, with $\boldsymbol\mu=\overline{\boldsymbol\mu}$ fixed, the LP is
% \begin{equation}
% \label{eq:LP-scheme}
% \begin{array}{ll}
% \textnormal{maximize}_{\boldsymbol B^{(k)},\alpha,\beta,\gamma_\ast}
%     & \gamma_\ast \\[0.5ex]
% \textnormal{subject to}
%     & P_a(\cos\theta;\boldsymbol B^{(k)},\overline{\boldsymbol\mu})\ge \alpha,
%       \quad \theta\in\Theta_a,\\
%     & P_B(\cos\theta;\boldsymbol B^{(k)},\overline{\boldsymbol\mu})\ge \beta,
%       \quad \theta\in\Theta_B,\\
%     & \gamma_\ast\le \alpha,\qquad \gamma_\ast\le \beta,\qquad
%       \alpha,\beta,\gamma_\ast\ge0, \\
%     & \sum_{i=0}^{k}B_i^{(k)}=1.
% \end{array}
% \end{equation}
% The use of $\gamma_\ast$ follows the standard auxiliary-variable reformulation of a max-min objective; see, e.g., 
% \cite[Section~4.1.3, Eq.~(4.11)]{boyd2004convex}.
% \cite[Section~4.1.3, Eq.~(4.11)]{boyd2004convex}.
In our implementation, this LP is solved using MATLAB's \texttt{linprog} solver to generate multiplier candidates for \eqref{FP-Multiplier}.
The selected candidate is then verified against the original feasibility conditions in \eqref{FP} as follows.
\begin{itemize}
    \item[(i)] The degree condition is obtained directly by $a_0^{(k)}\ne0$, $\mu_0\ne0$, and $B_0^{(k)}\ne0$.
    \item[(ii)] The Schur stability of $M(z;\boldsymbol\mu)$ is verified by the Schur--Cohn criterion \cite[Theorem 5.1]{elaydi2005}.
    \item[(iii)] The coprimality of $(M(z;\boldsymbol a^{(k)}), M(z;\boldsymbol\mu))$ and $(M(z;\boldsymbol B^{(k)}), M(z;\boldsymbol\nu))$ is checked by computing the corresponding resultants, which rule out common roots when the resultants are nonzero; see \cite[Theorem 4.1]{sturmfels2002}.
    \item[(iv)] The positivity conditions in \eqref{eq:FP-poly} are certified by applying Sturm's theorem to the shifted polynomials $P_a-\alpha$ and $P_B-\beta$ to count their real roots in $[-1,1]$, followed by a sign check at one point of the interval; see, e.g., \cite[Theorem 1.4]{sturmfels2002}.
    % \item[(iv)] The positivity condition \eqref{eq:FP-D1} is reduced by the variable transformation $x=\cos\theta$ to a polynomial positivity problem on $[-1,1]$.
    % For \eqref{eq:FP-D2}, the same transformation gives an inequality of the form $R_B(x)\ge 2\beta(1-x)$.
    % Since $M(1;\boldsymbol\nu)=0$, we have $R_B(1)=0$, and hence $R_B(x)$ is divisible by $1-x$.
    % Then $R_B(x)/(2(1-x))$ with its continuous extension at $x=1$ is a polynomial, and \eqref{eq:FP-D2} is likewise reduced to a polynomial positivity problem on $[-1,1]$. These polynomial positivity problems are certified by Sturm's theorem, which gives the number of real roots in an interval; see, e.g., \cite[Theorem 1.4]{sturmfels2002}.
\end{itemize}
% \textcolor{red}{The matrices $G_a$ and $G_B$ are obtained from \cref{rem:determine-G}.}

\subsection{Multipliers for IMEX-BDF6 and IMEX-WSBDF7 schemes}
\label{subsec:energy-stability-6-7}

In this subsection, we solve the finite-grid LP \eqref{eq:LP-multiplier} to obtain a set of multiplier candidates for the IMEX-BDF6 and IMEX-WSBDF7 schemes, respectively.
Then these candidates are verified against the original feasibility conditions in \eqref{FP}, which ensures modified energy dissipation according to \Cref{thm:main}.

\begin{proposition}
\label{prop:bdf6-feasible}
The IMEX-BDF6 scheme defined by
\begin{equation*}
\begin{aligned}
    \boldsymbol{A}^{(6)} &= \Bigl[
        \frac{49}{20},-6,\frac{15}{2},-\frac{20}{3},
        \frac{15}{4},-\frac{6}{5},\frac{1}{6}
    \Bigr]^\top,\\
    \boldsymbol{B}^{(6)} &= [
        1,0,0,0,0,0,0
    ]^\top,\qquad
    \hat{\boldsymbol{B}}^{(6)} = [
        6,-15,20,-15,6,-1
    ]^\top
\end{aligned}
\end{equation*}
together with the multiplier coefficient vector $\boldsymbol{\mu}=\tfrac{1}{6}[8,-2,-1,3,-3,1]^\top$ satisfies \eqref{FP-Multiplier} with $\alpha=1/6$ and $\beta=1/6$. Hence, by \Cref{thm:main}, the scheme admits the modified energy dissipation. The corresponding matrices $G_a$ and $G_B$ in the modified energy \eqref{eq:EG}, with $\hat{c}_i$ given by \eqref{eq:tilde-c}, satisfy \eqref{eq:pdc} and are given by
{\footnotesize
\begin{equation}
\label{eq:GaGb-bdf6}
G_a = \frac{1}{100}
\begin{bmatrix}
 286 & -273 &  229 & -125 &  17 \\
-273 &  354 & -279 &  146 & -27 \\
 229 & -279 &  260 & -136 &  25 \\
-125 &  146 & -136 &   87 & -15 \\
  17 &  -27 &   25 &  -15 &  12
\end{bmatrix},
\ 
G_B = \frac{1}{12}
\begin{bmatrix}
 13 & -6 &  0 &  4 & -5 &  2 \\
 -6 &  7 & -4 &  0 &  2 & -1 \\
  0 & -4 &  6 & -4 &  1 &  0 \\
  4 &  0 & -4 &  6 & -4 &  1 \\
 -5 &  2 &  1 & -4 &  5 & -2 \\
  2 & -1 &  0 &  1 & -2 &  1
\end{bmatrix}.
\end{equation}
}
In particular, \eqref{eq:tilde-c} gives $\hat{c}_0=625/3$, which appears in the time-step bound \eqref{eq:tau-max}.
\end{proposition}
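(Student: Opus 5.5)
The plan is to verify the four conditions (a)--(d) of \eqref{FP} for the given $\boldsymbol B^{(6)}$ and $\boldsymbol\mu$, and then invoke \Cref{thm:main}. The explicit $G_a$, $G_B$ are checked by direct substitution into \eqref{eq:Ua}--\eqref{eq:UB}.

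\textbf{Step 1: data and the degree condition.} From \eqref{def:coefs-a-b-c} I compute the partial sums of $\boldsymbol A^{(6)}$:
\[
\boldsymbol a^{(6)}=\bigl[\tfrac{49}{20},-\tfrac{71}{20},\tfrac{79}{20},-\tfrac{163}{60},\tfrac{31}{30},-\tfrac16\bigr]^\top .
\]
I also compute $\boldsymbol\nu=[\mu_0,\mu_1-\mu_0,\ldots,-\mu_5]^\top=\tfrac16[8,-10,1,4,-6,4,-1]^\top$. The normalizations $\sum\mu_i=1$ and $\sum B_i=1$ hold. The degree condition holds because $a_0=49/20$, $\mu_0=4/3$ and $B_0=1$ are all nonzero.

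\textbf{Step 2: positivity (d).} I expand $P_a(x)$ and $P_B(x)$ in the Chebyshev form given above.
\begin{itemize}
\item For $P_B$, note that $M(z;\boldsymbol B^{(6)})=z^6$. Hence $\operatorname{Re}\{M(\mathrm e^{\mathrm i\theta};\boldsymbol B)M(\mathrm e^{-\mathrm i\theta};\boldsymbol\nu)\}=\sum_j\nu_j\cos(j\theta)$. This is a degree-$6$ polynomial in $x$ vanishing at $x=1$. Dividing by $2(1-x)$ gives a quintic, and I check that it is $\ge 1/6$ on $[-1,1]$.
\item For $P_a$, I get a quintic in $x$ with coefficients built from $a_j\mu_{j+m}+a_{j+m}\mu_j$, and I check that it is $\ge 1/6$.
\end{itemize}
Since the data are rational, both checks are exact. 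I apply Sturm sequences to $P_a-1/6$ and $P_B-1/6$ on $[-1,1]$, plus a sign evaluation at one point. Alternatively, I exhibit $P-1/6$ as a sum of squares weighted by $(1\pm x)$.

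I expect this to be the main obstacle. The margin $1/6$ may be tight, so $P-1/6$ may have a double root (touching) rather than being strictly positive. In that case I would use the Sturm count with multiplicities via $\gcd(P,P')$, or factor out the square explicitly.

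\textbf{Step 3: Schur stability and coprimality.} For (b), I apply the Schur--Cohn criterion to $8z^5-2z^4-z^3+3z^2-3z+1$. Concretely, I iterate the Schur transform $p\mapsto \bar p_0p-p_5p^*$ and check that the leading coefficient dominates at each stage.

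For (c):
\begin{itemize}
\item The pair $(z^6,(z-1)M(z;\boldsymbol\mu))$ is coprime iff the constant term $-\mu_5\neq0$ is nonzero, which holds since $\mu_5=1/6$.
\item For $(M(z;\boldsymbol a),M(z;\boldsymbol\mu))$, I compute the resultant, a rational number, and check that it is nonzero.
\end{itemize}

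\textbf{Step 4: conclusion and the explicit matrices.} With (a)--(d) established, \Cref{thm:main} yields positive definite $G_a$, $G_B$ satisfying \eqref{eq:pdc}, and hence modified energy dissipation for $\tau\le\tau_{\max}$.

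To confirm the displayed matrices, I verify the following:
\begin{itemize}
\item $U_a-\alpha\boldsymbol e_1\boldsymbol e_1^\top$ and $U_B-\beta\boldsymbol\lambda\boldsymbol\lambda^\top$ are positive semidefinite, by checking that all leading principal minors of a rank-revealing $LDL^\top$ factorization are nonnegative.
\item $G_a$ and $G_B$ themselves are positive definite, via Cholesky.
\item $\boldsymbol 1^\top G_B\boldsymbol 1=\tfrac12$, consistent with \cref{lem:1gnu1}.
\end{itemize}
Finally, $\hat c_0=\sum_i\tilde c_i$ is a direct evaluation of \eqref{eq:tilde-c}. It uses $\sum|\mu_j|=3$, $\hat{\boldsymbol b}^{(6)}$ from the partial sums of $\hat{\boldsymbol B}^{(6)}$ minus one, and $\sum_j j|\mu_j|$. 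This should yield $625/3$.
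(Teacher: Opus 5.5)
Your proposal is correct and follows essentially the same route as the paper: degree check, Schur--Cohn, nonzero resultants for coprimality, Sturm's theorem for $P_a-\tfrac16$ (which is strictly positive on $[-1,1]$), then \Cref{thm:main} plus direct verification of the displayed matrices and of $\hat c_0=625/3$. Your constant-term argument for $(z^6,M(z;\boldsymbol\nu))$ is a cleaner special case of the resultant check. For $P_B$, the margin $\beta=\tfrac16$ is attained at $x=0,\pm1$, so plain Sturm counting would find roots there and your fallback is the step actually needed: $P_B-\tfrac16=\tfrac83x^2(1-x)^2(1+x)$, which is exactly the factorization the paper uses.
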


\begin{proof}
The polynomial $M(z;\boldsymbol{\mu})=\tfrac{1}{6}(8z^5-2z^4-z^3+3z^2-3z+1)$ is Schur stable according to the Schur--Cohn criterion. The pairs $(M(z;\boldsymbol{a}^{(6)}),M(z;\boldsymbol{\mu}))$ and $(M(z;\boldsymbol{B}^{(6)}), M(z;\boldsymbol{\nu}))$ are both coprime, as the corresponding resultants are nonzero. We omit the detailed calculations. The stability and coprimality conditions can also be verified from the roots of the associated polynomials.

The polynomial $P_a$ in \eqref{eq:FP-poly} is
\begin{equation*}
    P_a^{(6)}(x) = \frac{1}{6} + \frac{268x^5 - 276x^4 - 436x^3 + 511x^2 + 5x + 3}{90}.
\end{equation*}
By Sturm's theorem, $P_a^{(6)}(x) - 1/6$ has no real roots in $[-1,1]$. Together with $P_a^{(6)}(1) = 1 > 1/6$, we obtain $P_a^{(6)}(x) \ge 1/6$ for all $x\in [-1,1]$. Analogously, the polynomial $P_B$ in \eqref{eq:FP-poly} is
\begin{equation*}
    P_B^{(6)}(x) = \frac{1}{6} + \frac{8x^{2}(1-x)^{2}(1+x)}{3}.
\end{equation*}
It follows directly that $P_B^{(6)}(x)\geq 1/6$ for all $x\in[-1,1]$. Hence \eqref{eq:FP-poly} holds with $\alpha=1/6$ and $\beta=1/6$.

Therefore, the multiplier $\boldsymbol{\mu}=\tfrac{1}{6}[8,-2,-1,3,-3,1]^\top$ is indeed a feasible solution to \eqref{FP-Multiplier} for IMEX-BDF6 with $\alpha=1/6$ and $\beta=1/6$. Hence, by \Cref{thm:main}, the IMEX-BDF6 scheme preserves the modified energy dissipation. The matrices in \eqref{eq:GaGb-bdf6} are obtained from \cref{rem:determine-G} with the polynomial pairs used in \Cref{thm:main}. Their positive definiteness and the inequalities in \eqref{eq:pdc} can be verified directly.
\end{proof}

\begin{proposition}
\label{prop:wsbdf7-feasible}
The IMEX-WSBDF7 scheme of \cite{2025IMA-NO-Akrivis-BDF7}, defined by
\begin{equation}
\label{eq:A-B-Bhat-wsbdf7}
\begin{aligned}
\boldsymbol{A}^{(7)} &= \Bigl[
    \frac{1049}{140},-\frac{239}{10},\frac{75}{2},-40,
        \frac{355}{12},-\frac{141}{10},\frac{39}{10},-\frac{10}{21}
\Bigr]^\top,\\
\boldsymbol{B}^{(7)} &= \bigl[
    3,-2,0,0,0,0,0,0
\bigr]^\top,\\
\hat{\boldsymbol{B}}^{(7)} &= \bigl[
    19,-63,105,-105,63,-21,3
\bigr]^\top
\end{aligned}
\end{equation}
together with the multiplier coefficient vector
$\boldsymbol{\mu} = \frac{1}{5}[20,-21,3,12,-16,11,-4]^\top$
satisfies \eqref{FP-Multiplier} with $\alpha=1/4$ and $\beta=1/100$.
By \Cref{thm:main}, the scheme admits modified energy dissipation. The corresponding matrices $G_a$ and $G_B$ in the modified energy \eqref{eq:EG}, with $\hat{c}_i$ given by \eqref{eq:tilde-c}, satisfy \eqref{eq:pdc} and are given by
{
\footnotesize
\begin{equation}
\label{eq:GaGb-wsbdf7-robust}
\begin{aligned}
G_a = \frac{1}{100} &\,
\begin{bmatrix}
 2544 & -4320 &  4139 & -2754 &  1180 &  -280 \\
-4320 &  8677 & -8828 &  5955 & -2694 &   684 \\
 4139 & -8828 &  9693 & -6738 &  3061 &  -774 \\
-2754 &  5955 & -6738 &  5054 & -2459 &   660 \\
 1180 & -2694 &  3061 & -2459 &  1432 &  -433 \\
 -280 &   684 &  -774 &   660 &  -433 &   164
\end{bmatrix},
\\[4pt]
G_B = \frac{1}{10000} &\,
\begin{bmatrix}
 103511 & -129612 &  43554 &  38296 & -69378 &  50646 & -17017 \\
-129612 &  198687 & -118031 &   1837 &  63785 & -59721 &  22055 \\
  43554 & -118031 &  146214 & -97072 &  28484 &   8221 &  -8370 \\
  38296 &    1837 & -97072 & 137157 & -106715 &  50284 & -11787 \\
 -69378 &   63785 &  28484 & -106715 & 121949 & -77349 &  23224 \\
  50646 &  -59721 &   8221 &  50284 & -77349 &  60692 & -21773 \\
 -17017 &   22055 &  -8370 & -11787 &  23224 & -21773 &   9668
\end{bmatrix}.
\end{aligned}
\end{equation}
}
In particular, \eqref{eq:tilde-c} gives $\hat{c}_0=33856/5$, which appears in the time-step bound \eqref{eq:tau-max}.
\end{proposition}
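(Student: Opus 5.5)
The plan mirrors the proof of \cref{prop:bdf6-feasible}: verify the four conditions of \eqref{FP} for the given $\boldsymbol B^{(7)}$ and $\boldsymbol\mu$, then invoke \Cref{thm:main}.

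\emph{Step 1: linear data and degree condition.} We first check that $\sum_i B_i^{(7)}=3-2=1$ and $\sum_i\mu_i=\tfrac15(20-21+3+12-16+11-4)=1$. From \eqref{def:coefs-a-b-c} we compute
\[
\boldsymbol a^{(7)}=\bigl[\tfrac{1049}{140},\,-\tfrac{2297}{140},\,\ldots\bigr]^\top ,
\]
and from \eqref{eq:multiplier2} we compute $\boldsymbol\nu$. The degree condition (a) then reads $a_0^{(7)}=\tfrac{1049}{140}\neq0$, $\mu_0=4\neq0$ and $B_0^{(7)}=3\neq0$.

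\emph{Step 2: Schur stability and coprimality.} We apply the Schur--Cohn criterion to $M(z;\boldsymbol\mu)=\tfrac15(20z^6-21z^5+3z^4+12z^3-16z^2+11z-4)$. This means running the reduction $p\mapsto \bar p_0 p - p_n p^*$ and checking the sign pattern of the resulting determinants, in exact rational arithmetic. For coprimality we compute two resultants. The first is $\operatorname{Res}(M(z;\boldsymbol a^{(7)}),M(z;\boldsymbol\mu))$. The second is $\operatorname{Res}(M(z;\boldsymbol B^{(7)}),M(z;\boldsymbol\nu))$, where $M(z;\boldsymbol B^{(7)})=z^6(3z-2)$. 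Since $M(0;\boldsymbol\nu)=\nu_7=-\mu_6=4/5\neq0$, this second resultant reduces to checking $M(2/3;\boldsymbol\nu)=-\tfrac13M(2/3;\boldsymbol\mu)\neq0$, which follows immediately from Schur stability's companion computation, or directly by evaluation. The first resultant is a $12\times12$ Sylvester determinant in rationals and is routine.

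\emph{Step 3: positivity (the main obstacle).} Using the Chebyshev expansions, we write $P_a^{(7)}(x)$ as an explicit degree-$6$ polynomial. For $P_B^{(7)}$, $M(e^{i\theta};\boldsymbol B)$ is $e^{6i\theta}(3e^{i\theta}-2)$, so its numerator involves only $T_m$ for $m\le7$. We divide out $(1-x)$, which is exact since $M(1;\boldsymbol\nu)=0$, and obtain a degree-$6$ polynomial $P_B^{(7)}(x)$.

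We must show $P_a^{(7)}-\tfrac14>0$ and $P_B^{(7)}-\tfrac1{100}>0$ on $[-1,1]$. The margins are expected to be tight: $\beta=1/100$ is small, reflecting near-tangency found by the LP. For this reason I would not rely on visual minima. Instead I would build Sturm sequences in exact arithmetic and count sign changes at $x=\pm1$ to show zero roots in $[-1,1]$, then evaluate at one point, e.g. $x=1$, where $P_a^{(7)}(1)=M(1;\boldsymbol a)M(1;\boldsymbol\mu)$ is positive. If Sturm computation is unwieldy, a fallback is to exhibit an SOS-type certificate $p(x)=s_0(x)+(1-x^2)s_1(x)$ with rational Gram matrices. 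The hard part is purely computational: certifying the tight bound for $P_B$.

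\emph{Step 4: conclusion and the matrices.} With (a)--(d) verified, \Cref{thm:main} yields modified energy dissipation. The matrices in \eqref{eq:GaGb-wsbdf7-robust} are obtained via \cref{rem:determine-G}. This uses Fejér--Riesz on $F(z,z^{-1})$ for the triples $(M(\cdot;\boldsymbol a),M(\cdot;\boldsymbol\mu),z^6,\tfrac14)$ and $(M(\cdot;\boldsymbol B),M(\cdot;\boldsymbol\nu),z^7-z^6,\tfrac1{100})$, and then solves \eqref{eq:equi-thm:cond5}, with $G_a=G/2$ and $G_B=G/2$. Since the displayed entries are rounded, I would state that \eqref{eq:pdc} is checked directly for them. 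This amounts to checking positive definiteness of $G_a,G_B$ by Cholesky, and checking positive semidefiniteness of $U_a-\alpha\boldsymbol e_1\boldsymbol e_1^\top$ and of $U_B-\beta\boldsymbol\lambda\boldsymbol\lambda^\top$, possibly with slightly reduced $\alpha,\beta$ to absorb rounding.

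Finally, $\hat c_0=\sum_j\tilde c_j$ follows from \eqref{eq:tilde-c}, using $\sum_j|\mu_j|=91/5$ and the values $\hat b_i^{(7)}$ read off from $\hat{\boldsymbol B}^{(7)}$. This yields $33856/5$ by direct summation.
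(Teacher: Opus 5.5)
Your route is the paper's: check (a)--(d) of \eqref{FP} using leading coefficients, Schur--Cohn, resultants, and Sturm sequences plus one sign evaluation. Then invoke \Cref{thm:main}, take $G_a,G_B$ from \cref{rem:determine-G}, and check \eqref{eq:pdc} directly. The paper differs only in the $P_B$ step. It writes $P_B^{(7)}(x)-\tfrac1{100}=\tfrac1{100}\bigl(9+20(1-x)Q_B^{(7)}(x)\bigr)$ with a quintic $Q_B^{(7)}$ and applies Sturm to $Q_B^{(7)}$. This also shows the margin is not tight, since $P_B^{(7)}\ge\tfrac1{10}$ on $[-1,1]$.

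Your last step fails as written. The correct value is $\sum_j|\mu_j|=\tfrac15(20+21+3+12+16+11+4)=\tfrac{87}{5}$, not $\tfrac{91}{5}$. Summing \eqref{eq:tilde-c} gives
\[
\hat c_0=\Bigl(2\sum_{i=1}^{6}|\hat b_i^{(7)}|+2\Bigr)\sum_{j=0}^{6}|\mu_j|-|\mu_0|+\sum_{j=1}^{6}j^2|\mu_j| .
\]
Here $(\hat b_1^{(7)},\ldots,\hat b_6^{(7)})=(18,-45,60,-45,18,-3)$, which are partial sums of $\hat{\boldsymbol B}^{(7)}$ minus one, so $\sum_i|\hat b_i^{(7)}|=189$. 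Also $\sum_j j^2|\mu_j|=\tfrac{816}{5}$. Hence $\hat c_0=380\cdot\tfrac{87}{5}-4+\tfrac{816}{5}=\tfrac{33856}{5}$, whereas your $\tfrac{91}{5}$ would give $\tfrac{35376}{5}$.

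There are also three smaller points to fix.
\begin{enumerate}
\item Schur stability says nothing about $M(2/3;\boldsymbol\mu)$, because $2/3$ lies inside the disk. Use the direct evaluation $M(2/3;\boldsymbol\mu)=-\tfrac{466}{3645}\neq0$.
\item At $x=1$ you need $P_a^{(7)}(1)>\tfrac14$, not mere positivity. This holds, since $P_a^{(7)}(1)=M(1;\boldsymbol a^{(7)})M(1;\boldsymbol\mu)=1$.
\item Shrinking $\alpha,\beta$ cannot absorb rounding. For $x_1=0$ the $U_a$-inequality reads $\boldsymbol x^\top U_a\boldsymbol x\ge0$ whatever $\alpha$ is. Likewise, for $y_1=y_2$ (e.g.\ $\boldsymbol y=\boldsymbol 1_8$) the $U_B$-inequality does not involve $\beta$. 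Meanwhile the exact construction makes $U_a-\alpha\boldsymbol e_1\boldsymbol e_1^\top=\tfrac12\boldsymbol q\boldsymbol q^\top$ singular, and \cref{lem:1gnu1} forces $G_B\boldsymbol 1_7=\tfrac12\boldsymbol\mu$ exactly. The displayed $G_B$ does satisfy this: its row sums are $2,-2.1,0.3,1.2,-1.6,1.1,-0.4$. So the direct verification must succeed for the displayed matrices with the stated $\alpha,\beta$ themselves, which is what the paper asserts.
\end{enumerate}
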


\begin{proof}
The polynomial $M(z;\boldsymbol{\mu}) = \frac{1}{5}(20z^6 - 21z^5 + 3z^4 + 12z^3 - 16z^2 + 11z - 4)$ is Schur stable according to the Schur--Cohn criterion. The pairs $(M(z;\boldsymbol{a}^{(7)}),M(z;\boldsymbol{\mu}))$ and $(M(z;\boldsymbol{B}^{(7)}), M(z;\boldsymbol{\nu}))$ are both coprime, as the corresponding resultants are nonzero. We omit the detailed calculations. These stability and coprimality conditions can also be verified from the roots of the associated polynomials. Using \eqref{def:coefs-a-b-c}, we obtain
\begin{equation*}
\boldsymbol{a}^{(7)} = \Big[\frac{1049}{140},-\frac{2297}{140},\frac{2953}{140},
   -\frac{2647}{140},\frac{1121}{105},-\frac{719}{210},\frac{10}{21}\Big]^\top.
\end{equation*}

The polynomial $P_a$ in \eqref{eq:FP-poly} is
\begin{equation*}
    P_a^{(7)}(x) = \tfrac{1}{4} + \tfrac{-274816x^6 + 467536x^5 + 83336x^4 - 495080x^3 + 207614x^2 + 12114x + 871}{2100}.
\end{equation*}
By Sturm's theorem, $P_a^{(7)}(x) - 1/4$ has no real roots in $[-1,1]$. Together with $P_a^{(7)}(1)-1/4 > 0$, we obtain $P_a^{(7)}(x)\geq 1/4$ for all $x\in[-1,1]$.
Analogously, the polynomial $P_B$ in \eqref{eq:FP-poly} is
\begin{equation*}
    P_B^{(7)}(x) = \tfrac{1}{100} + \tfrac{9 + 20(1-x)Q_B^{(7)}(x)}{100}, \quad Q_B^{(7)}(x) = 384x^5-80x^4-328x^3+144x^2+8x+1.
\end{equation*}
By Sturm's theorem, the polynomial $Q_B^{(7)}(x)$ has no real roots in $[-1,1]$. Together with $Q_B^{(7)}(1)>0$, it is strictly positive on $[-1,1]$. Combined with $1-x\ge 0$ on $[-1,1]$, this gives $P_B^{(7)}(x)\geq 1/100$ for all $x\in[-1,1]$. Hence \eqref{eq:FP-poly} holds with $\alpha=1/4$ and $\beta=1/100$.

Therefore, the multiplier $\boldsymbol{\mu} = \frac{1}{5}[20,-21,3,12,-16,11,-4]^\top$ is indeed a feasible solution to \eqref{FP-Multiplier} for IMEX-WSBDF7 with $\alpha=1/4$ and $\beta=1/100$. Hence, by \Cref{thm:main}, the IMEX-WSBDF7 scheme preserves the modified energy dissipation. The matrices in \eqref{eq:GaGb-wsbdf7-robust} are obtained from \cref{rem:determine-G} with the polynomial pairs used in \Cref{thm:main}. Their positive definiteness and the inequalities in \eqref{eq:pdc} can be verified directly.
\end{proof}

\subsection{An energy-dissipative IMEX-LMM8}

We now construct an eighth-order energy-dissipative IMEX-LMM scheme by searching over a set of simple integer choices of the implicit coefficient vector $\boldsymbol B^{(8)}$:
\[
   \left\{ \boldsymbol B^{(8)} \mid B_j^{(8)}\in\mathbb Z,\; |B_j^{(8)}|\le 2,\;
    B_j^{(8)}=0\ \text{for } j>4,\;
    \sum_{j=0}^{8}B_j^{(8)}=1\right\}.
\]
%are enumerated.
% For each candidate $\boldsymbol B^{(8)}$, the remaining scheme coefficients are reconstructed from the order conditions by \eqref{eq:matrix-parameterization}.
For each $\boldsymbol B^{(8)}$ in this set, we solve LP \eqref{eq:LP-multiplier} to see if there exists a multiplier candidate. If so, the corresponding scheme and this multiplier candidate are further verified against the original feasibility conditions in \eqref{FP}.

\begin{proposition}
\label{prop:lmm8-feasible}
The IMEX-LMM8 scheme defined by
\begin{equation}
\label{eq:A-B-Bhat-8th}
\begin{aligned}
\boldsymbol{A}^{(8)} &= \Bigl[
    \frac{4369}{840}, -\frac{581}{30}, \frac{181}{5}, -\frac{1327}{30}, \frac{115}{3}, -\frac{241}{10}, \frac{51}{5}, -\frac{527}{210}, \frac{11}{40}
\Bigr]^\top, \\
\boldsymbol{B}^{(8)} &= \bigl[
    2, -2, 0, 2, -1, 0, 0, 0, 0
\bigr]^\top, \\
\hat{\boldsymbol{B}}^{(8)} &= \bigl[
    14, -56, 114, -141, 112, -56, 16, -2
\bigr]^\top
\end{aligned}
\end{equation}
together with the multiplier coefficient vector $\boldsymbol{\mu} = \frac{1}{4}[9, -13, 10, 0, -5, 6, -4, 1]^\top$ satisfies \eqref{FP} with $\alpha = 1/64$ and $\beta = 1/53$. Hence, by \Cref{thm:main}, the scheme admits modified energy dissipation. The corresponding matrices $G_a$ and $G_B$ in the modified energy \eqref{eq:EG}, with $\hat{c}_i$ given by \eqref{eq:tilde-c}, satisfy \eqref{eq:pdc} and are given by
{\footnotesize
\begin{equation}
\label{eq:GaGb-8th-robust}
\begin{aligned}
% G_a = \tfrac{1}{10000} &\,
% \begin{bmatrix}
%   75508 & -160956 &  203546 & -159950 &   92595 &  -35176 &   5454 \\
% -160956 &  368482 & -475338 &  383138 & -219714 &   84760 & -13766 \\
%  203546 & -475338 &  628068 & -513809 &  296772 & -113163 &  19825 \\
% -159950 &  383138 & -513809 &  431646 & -251786 &   96567 & -16157 \\
%   92595 & -219714 &  296772 & -251786 &  152167 &  -59983 &  11044 \\
%  -35176 &   84760 & -113163 &   96567 &  -59983 &   26738 &  -5601 \\
%    5454 &  -13766 &   19825 & -16157 &   11044 &   -5601 &   2960
% \end{bmatrix},
% \\[4pt]
% G_B = \tfrac{1}{100000} &\,
% \begin{bmatrix}
%  403371 & -657208 &  461064 &   36464 & -260305 &  251117 & -163689 &  41686 \\
% -657208 & 1197243 & -931714 &    -666 &  496571 & -474187 &  275524 & -68063 \\
%  461064 & -931714 &  897993 & -194792 & -349955 &  440110 & -254248 &  56542 \\
%   36464 &    -666 & -194792 &  304571 & -153239 &  -51486 &   79210 & -20062 \\
% -260305 &  496571 & -349955 & -153239 &  404839 & -290673 &  105937 & -15675 \\
%  251117 & -474187 &  440110 &  -51486 & -290673 &  379082 & -232868 &  53905 \\
% -163689 &  275524 & -254248 &   79210 &  105937 & -232868 &  199388 & -59254 \\
%   41686 &  -68063 &   56542 &  -20062 &  -15675 &   53905 &  -59254 &  23421
% \end{bmatrix}.
G_a = {}&\,
\begin{bmatrix}
 \tfrac{75508}{10000} & -\tfrac{160956}{10000} &  \tfrac{203546}{10000} & -\tfrac{159950}{10000} &  \tfrac{92595}{10000} & -\tfrac{35176}{10000} &  \tfrac{5454}{10000} \\[3pt]
-\tfrac{160956}{10000} &  \tfrac{368482}{10000} & -\tfrac{475338}{10000} &  \tfrac{383138}{10000} & -\tfrac{219714}{10000} &  \tfrac{84760}{10000} & -\tfrac{13766}{10000} \\[3pt]
 \tfrac{203546}{10000} & -\tfrac{475338}{10000} &  \tfrac{628068}{10000} & -\tfrac{513809}{10000} &  \tfrac{296772}{10000} & -\tfrac{113163}{10000} &  \tfrac{19825}{10000} \\[3pt]
-\tfrac{159950}{10000} &  \tfrac{383138}{10000} & -\tfrac{513809}{10000} &  \tfrac{431646}{10000} & -\tfrac{251786}{10000} &  \tfrac{96567}{10000} & -\tfrac{16157}{10000} \\[3pt]
 \tfrac{92595}{10000} & -\tfrac{219714}{10000} &  \tfrac{296772}{10000} & -\tfrac{251786}{10000} &  \tfrac{152167}{10000} & -\tfrac{59983}{10000} &  \tfrac{11044}{10000} \\[3pt]
-\tfrac{35176}{10000} &  \tfrac{84760}{10000} & -\tfrac{113163}{10000} &  \tfrac{96567}{10000} & -\tfrac{59983}{10000} &  \tfrac{26738}{10000} & -\tfrac{5601}{10000} \\[3pt]
 \tfrac{5454}{10000} & -\tfrac{13766}{10000} &  \tfrac{19825}{10000} & -\tfrac{16157}{10000} &  \tfrac{11044}{10000} & -\tfrac{5601}{10000} &  \tfrac{2960}{10000}
\end{bmatrix},
\\[4pt]
G_B = {}&\,
\begin{bmatrix}
 \tfrac{403371}{100000} & -\tfrac{657208}{100000} &  \tfrac{461064}{100000} &  \tfrac{36464}{100000} & -\tfrac{260305}{100000} &  \tfrac{251117}{100000} & -\tfrac{163689}{100000} &  \tfrac{41686}{100000} \\[3pt]
-\tfrac{657208}{100000} &  \tfrac{1197243}{100000} & -\tfrac{931714}{100000} & -\tfrac{666}{100000} &  \tfrac{496571}{100000} & -\tfrac{474187}{100000} &  \tfrac{275524}{100000} & -\tfrac{68063}{100000} \\[3pt]
 \tfrac{461064}{100000} & -\tfrac{931714}{100000} &  \tfrac{897993}{100000} & -\tfrac{194792}{100000} & -\tfrac{349955}{100000} &  \tfrac{440110}{100000} & -\tfrac{254248}{100000} &  \tfrac{56542}{100000} \\[3pt]
 \tfrac{36464}{100000} & -\tfrac{666}{100000} & -\tfrac{194792}{100000} &  \tfrac{304571}{100000} & -\tfrac{153239}{100000} & -\tfrac{51486}{100000} &  \tfrac{79210}{100000} & -\tfrac{20062}{100000} \\[3pt]
-\tfrac{260305}{100000} &  \tfrac{496571}{100000} & -\tfrac{349955}{100000} & -\tfrac{153239}{100000} &  \tfrac{404839}{100000} & -\tfrac{290673}{100000} &  \tfrac{105937}{100000} & -\tfrac{15675}{100000} \\[3pt]
 \tfrac{251117}{100000} & -\tfrac{474187}{100000} &  \tfrac{440110}{100000} & -\tfrac{51486}{100000} & -\tfrac{290673}{100000} &  \tfrac{379082}{100000} & -\tfrac{232868}{100000} &  \tfrac{53905}{100000} \\[3pt]
-\tfrac{163689}{100000} &  \tfrac{275524}{100000} & -\tfrac{254248}{100000} &  \tfrac{79210}{100000} &  \tfrac{105937}{100000} & -\tfrac{232868}{100000} &  \tfrac{199388}{100000} & -\tfrac{59254}{100000} \\[3pt]
 \tfrac{41686}{100000} & -\tfrac{68063}{100000} &  \tfrac{56542}{100000} & -\tfrac{20062}{100000} & -\tfrac{15675}{100000} &  \tfrac{53905}{100000} & -\tfrac{59254}{100000} &  \tfrac{23421}{100000}
\end{bmatrix}.
\end{aligned}
\end{equation}
}
In particular, \eqref{eq:tilde-c} gives $\hat{c}_0=25043/4$, which appears in the time-step bound \eqref{eq:tau-max}.
\end{proposition}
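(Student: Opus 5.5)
The plan follows the template of \cref{prop:bdf6-feasible,prop:wsbdf7-feasible}: verify the four conditions of \eqref{FP} for the given data and then invoke \cref{thm:main}. Since here the scheme is also being constructed, I would first check that $\boldsymbol A^{(8)}$ and $\hat{\boldsymbol B}^{(8)}$ are the coefficients produced from $\boldsymbol B^{(8)}=[2,-2,0,2,-1,0,0,0,0]^\top$ by \eqref{eq:matrix-parameterization}. This amounts to verifying the order conditions \eqref{cond:order-lmm} for $k=8$ and the normalization \eqref{cond:normalization} by direct rational arithmetic; for instance, $\sum_i A^{(8)}_i=0$, $\sum_i B^{(8)}_i=1$ and $\sum_i\hat B^{(8)}_i=1$ are immediate.

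I would then compute $\boldsymbol a^{(8)}$ from \eqref{def:coefs-a-b-c} as partial sums of $\boldsymbol A^{(8)}$, and $\boldsymbol\nu$ from \eqref{eq:multiplier2}. The degree condition (a) holds because $a_0^{(8)}=4369/840\ne0$, $\mu_0=9/4\ne0$ and $B_0^{(8)}=2\ne0$.

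For (b), I would apply the Schur--Cohn criterion to $M(z;\boldsymbol\mu)=\tfrac14(9z^7-13z^6+10z^5-5z^3+6z^2-4z+1)$. For (c), I would compute the resultants $\operatorname{Res}(M(z;\boldsymbol a^{(8)}),M(z;\boldsymbol\mu))$ and $\operatorname{Res}(M(z;\boldsymbol B^{(8)}),M(z;\boldsymbol\nu))$ exactly and check that both are nonzero. For the second pair, the only candidate common factors come from $M(z;\boldsymbol B^{(8)})=z^4(2z^4-2z^3+2z-1)$. Since $\nu_8=-\mu_7=-1/4\ne0$, $z=0$ is not a root of $M(z;\boldsymbol\nu)$, so the factor $z^4$ causes no trouble, in contrast to the degenerate case in the remark after \cref{thm:main}. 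It therefore only remains to check that the quartic $2z^4-2z^3+2z-1$ shares no root with $(z-1)M(z;\boldsymbol\mu)$.

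The main work is (d). I would expand $P_a(x)$ and $P_B(x)$ in the Chebyshev basis using the displayed formulas, convert them to monomial form with exact rational coefficients, and certify $P_a(x)-1/64>0$ and $P_B(x)-1/53>0$ on $[-1,1]$ by Sturm sequences: zero sign changes in root count on $[-1,1]$, plus a positive value at $x=1$. For $P_B$, I would first carry out the division by $2(1-x)$ exactly; this is legitimate since $M(1;\boldsymbol\nu)=0$. I expect this step to be the main obstacle. The margins $1/64$ and $1/53$ are small, so the degree-7 and degree-7 polynomials may come very close to the bounds, and the certification must be done in exact arithmetic.

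If the Sturm sequence becomes unwieldy, I would look for a structured decomposition as in \cref{prop:wsbdf7-feasible}, of the form $P_B(x)-\beta=c+(1-x)Q(x)$, and certify the lower-degree factor $Q$ instead.

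With (a)--(d) established, \cref{thm:main} yields positive definite $G_a,G_B$ and the dissipation $E_G^{n+1}\le E_G^n$ for $\tau\le\tau_{\max}$. I would then compute the explicit matrices in \eqref{eq:GaGb-8th-robust} via \cref{rem:determine-G}: a Fej\'er--Riesz factorization to obtain $\boldsymbol q$, followed by solving \eqref{eq:equi-thm:cond5} for $G$ and halving it. Their positive definiteness and \eqref{eq:pdc} would be checked directly, together with the consistency identity $\boldsymbol 1^\top G_B\boldsymbol 1=1/2$ from \cref{lem:1gnu1}. Since the displayed entries are rounded, I would state that they satisfy \eqref{eq:pdc} up to this rounding, or verify \eqref{eq:pdc} with the rounded entries at slightly reduced $\alpha$ and $\beta$. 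Finally, $\hat c_0=\sum_j\tilde c_j=25043/4$ follows by summing \eqref{eq:tilde-c} with the given $\boldsymbol\mu$ and $\hat{\boldsymbol b}^{(8)}$.
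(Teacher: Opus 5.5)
Your proposal is correct and follows the paper's proof essentially step by step: Schur--Cohn for $M(z;\boldsymbol\mu)$, nonzero resultants for coprimality, and Sturm certification that $P_a^{(8)}-1/64$ and $P_B^{(8)}-1/53$ have no roots in $[-1,1]$ together with a sign check at $x=1$, followed by \cref{thm:main} and \cref{rem:determine-G}; your additional checks (the order conditions, the harmless factor $z^4$ of $M(z;\boldsymbol B^{(8)})$ since $\nu_8=-\tfrac14\neq0$, and $\hat c_0=2\cdot12\cdot255+12+\tfrac{39}{4}+119=\tfrac{25043}{4}$) are all consistent with the data. One caution about your fallback for the rounded matrices: for the exact matrices of \cref{rem:determine-G}, $U_a-\alpha\boldsymbol e_1\boldsymbol e_1^\top$ and $U_B-\beta\boldsymbol\lambda\boldsymbol\lambda^\top$ both equal $\tfrac12\boldsymbol q\boldsymbol q^\top$ (with the respective $\boldsymbol q$), which is rank one, so lowering $\alpha,\beta$ only adds a rank-one term and cannot in general absorb rounding errors; a direct check of the displayed entries therefore needs Gram matrices computed with genuine slack and, for $G_B$, rounding that keeps $G_B\boldsymbol 1_8=\tfrac12\boldsymbol\mu$ exactly (\cref{lem:1gnu1}), which the displayed $G_B$ does satisfy.
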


\begin{proof}
The polynomial $M(z;\boldsymbol{\mu})=\tfrac{1}{4}(9z^7-13z^6+10z^5-5z^3+6z^2-4z+1)$ is Schur stable according to the Schur--Cohn criterion. The pairs $(M(z;\boldsymbol{a}^{(8)}),M(z;\boldsymbol{\mu}))$ and $(M(z;\boldsymbol{B}^{(8)}), M(z;\boldsymbol{\nu}))$ are both coprime, as the corresponding resultants are nonzero. We omit the detailed calculations. These stability and coprimality conditions can also be verified from the roots of the associated polynomials. Using \eqref{def:coefs-a-b-c}, we obtain
\begin{equation*}
\boldsymbol{a}^{(8)} = \Bigl[
    \frac{4369}{840}, -\frac{11899}{840}, \frac{18509}{840}, -\frac{18647}{840}, \frac{13553}{840}, -\frac{6691}{840}, \frac{1877}{840}, -\frac{11}{40}
\Bigr]^\top.
\end{equation*}

The polynomial $P_a$ in \eqref{eq:FP-poly} is
\begin{equation*}
    P_a^{(8)}(x) = \tfrac{1}{64} + \tfrac{293120x^7 - 606656x^6 - 340512x^5 + 1578912x^4 - 1167696x^3 + 238272x^2 + 6032x + 5143}{6720}.
\end{equation*}
By Sturm's theorem, $P_a^{(8)}(x) - 1/64$ has no real roots in $[-1,1]$. Together with $P_a^{(8)}(1) = 1 > 1/64$, we obtain $P_a^{(8)}(x)\geq 1/64$ for all $x\in[-1,1]$. 
Analogously, the polynomial $P_B$ in \eqref{eq:FP-poly} is
\begin{equation*}
    P_B^{(8)}(x) = \tfrac{1}{53} + \tfrac{6784x^7 - 13568x^6 - 1696x^5 + 16960x^4 - 6360x^3 - 3498x^2 + 1113x + 367}{212}.
\end{equation*}
By Sturm's theorem, $P_B^{(8)}(x)-1/53$ has no real roots in $[-1,1]$. Together with $P_B^{(8)}(1)-1/53>0$, we obtain $P_B^{(8)}(x)\geq 1/53$ for all $x\in[-1,1]$. Hence \eqref{eq:FP-poly} holds with $\alpha=1/64$ and $\beta=1/53$.

 Therefore, the IMEX-LMM8 scheme and multiplier above give a feasible solution to \eqref{FP} with $\alpha=1/64$ and $\beta=1/53$. Hence, by \Cref{thm:main}, the IMEX-LMM8 scheme preserves the modified energy dissipation. As before, the matrices in \eqref{eq:GaGb-8th-robust} are obtained from \cref{rem:determine-G} with the polynomial pairs used in \Cref{thm:main}, and their positive definiteness and the inequalities in \eqref{eq:pdc} can be verified directly.
\end{proof}

\begin{remark}
We mention that the values of $\alpha$ and $\beta$ used in \cref{prop:bdf6-feasible,prop:wsbdf7-feasible,prop:lmm8-feasible} might not be optimal.
\end{remark}

\begin{remark}[$H^1$-stability of LMMs]\label{rem:linear-parabolic}
The generalized Dahlquist's theory (\Cref{thm:equivalence}) can also be used to establish the $H^1$-stability analysis of LMMs for linear parabolic equations.
Consider, for example, a homogeneous linear parabolic problem discretized by the $k$-step LMM
\[
    \sum_{i=0}^k A_i^{(k)}u^{n+1-i}
    +
    \tau\mathcal L\sum_{i=0}^k B_i^{(k)}u^{n+1-i}
    =0,
\]
where $\mathcal L$ is a linear, self-adjoint, and positive semi-definite operator. This can be seen as a special case of gradient flow with $\mathcal M = -\mathcal I$ and $f(u) = 0$.
%If the coefficients of the scheme and the multiplier satisfy \eqref{FP}, then the same algebraic argument as in \cref{thm:main} gives positive definite matrices $G_a$ and $G_B$. Testing the LMM with $\sum_{i=0}^k\nu_i u^{n+1-i}$ yields 
Consequently, if the feasibility problem \eqref{FP} is solvable,
we directly obtain the following decreasing functional:
\[
    (\boldsymbol v_n,\boldsymbol v_n)_{G_a}
    +
    \tau(\boldsymbol u_n,\mathcal L\boldsymbol u_n)_{G_B}.
\]
In the case of $\mathcal L=-\Delta$, this decreasing functional gives an $H^1$-seminorm stability estimate.
%This means that the feasible multipliers proposed in \Cref{prop:bdf6-feasible,prop:wsbdf7-feasible,prop:lmm8-feasible} yield the $H^1$-seminorm stability of the corresponding LMMs for the homogeneous linear parabolic problem.
% The examples in \cref{prop:bdf6-feasible,prop:wsbdf7-feasible,prop:lmm8-feasible} therefore also fall under this $H^1$-seminorm stability argument.

% This is not an $L^2$ G-stability statement.  Indeed,
% \[
%     M(z;\boldsymbol A^{(k)})=(z-1)M(z;\boldsymbol a^{(k)}),
%     \qquad
%     M(z;\boldsymbol\nu_{H^1})=(z-1)M(z;\boldsymbol\mu),
% \]
% so the pair $M(z;\boldsymbol A^{(k)})$ and
% $M(z;\boldsymbol\nu_{H^1})$ has the common factor $z-1$, and the
% corresponding state matrix is only positive semidefinite.

For the $L^2$-stability, we shall find a vector $\tilde{\boldsymbol\nu}=[\tilde{\nu}_0,\ldots,\tilde{\nu}_k]^\top$ satisfying
\[
    \sum_{i=0}^k\tilde{\nu}_i=1,
    \qquad
    \deg M(z;\tilde{\boldsymbol\nu})=k,
\]
such that the generating polynomial $M(z;\tilde{\boldsymbol\nu})$ is Schur stable and is coprime to both $M(z;\boldsymbol A^{(k)})$ and $M(z;\boldsymbol B^{(k)})$, and there exist $\tilde\alpha, \tilde{\beta}>0$ such that
\[
\begin{aligned}
    \operatorname{Re}\bigl\{
    M(\mathrm e^{\mathrm i\theta};\boldsymbol A^{(k)})
    M(\mathrm e^{-\mathrm i\theta};\tilde{\boldsymbol\nu})
    \bigr\}&
    \ge
    2\tilde{\alpha}(1-\cos\theta), \\
    \operatorname{Re}\bigl\{
    M(\mathrm e^{\mathrm i\theta};\boldsymbol B^{(k)})
    M(\mathrm e^{-\mathrm i\theta};\tilde{\boldsymbol\nu})
    \bigr\}&
    \ge
    \tilde{\beta},\qquad 0\le\theta\le\pi.
\end{aligned}
\]
If such $\tilde{\boldsymbol\nu}$ exists, \Cref{thm:equivalence}(v) gives two positive definite matrices $\tilde{G}_{a}$ and $\tilde{G}_{B}$. Testing the LMM with $\sum_{i=0}^k\tilde{\nu}_i u^{n+1-i}$ then gives the following decreasing functional
\[
    (\boldsymbol u_n,\boldsymbol u_n)_{\tilde{G}_{a}}
    +
    \tau(\boldsymbol u_n,\mathcal L\boldsymbol u_n)_{\tilde{G}_{B}}.
\]
The $L^2$-stability then follows.

Take the BDF6 scheme as an example, whose $L^2$- and $H^1$-stability has been well-studied in \cite{2021SINUM-NO-Akrivis-BDF6}.
From the aforementioned discussions, we can derive the $H^1$-seminorm stability straightforwardly from the multiplier ${\boldsymbol\mu}$ in Proposition \ref{prop:bdf6-feasible}. Furthermore, following a similar procedure in \cref{subsec:parameterization}, we can obtain the following admissible $\tilde{\boldsymbol\nu}$ and the corresponding $\tilde{\alpha}$ and $\tilde{\beta}$:
\[
    \tilde{\boldsymbol\nu}
    =
    \frac12[7,-7,3,0,-2,0,1]^\top,
    \quad
    \tilde{\alpha}=\frac16,\quad \tilde{\beta}=\frac13 .
\]
The $L^2$-stability then follows.
Compared with the BDF6's stability argument in \cite{2021SINUM-NO-Akrivis-BDF6}, the multiplier there is used to establish the quadratic decomposition of only one coefficient polynomial, and the other polynomial is treated after summing over the time levels.
The multiplier $\tilde{\boldsymbol\nu}$ obtained from our framework gives the quadratic decomposition of both two polynomials corresponding to $\boldsymbol A^{(6)}$ and $\boldsymbol B^{(6)}$.
\end{remark}

% \begin{remark}\label{rem:linear-parabolic}
% The generalized \textcolor{red}{Dahlquist} equivalence theorem (\Cref{thm:equivalence}) is purely algebraic and can also be used in multiplier-based stability arguments for linear parabolic equations.
% For example, consider a homogeneous linear parabolic equation discretized by a $k$-step LMM
% \[
%     \sum_{i=0}^k A_i^{(k)} u^{n+1-i} = \tau \mathcal{M} \mathcal{L} \sum_{i=0}^k B_i^{(k)} u^{n+1-i}
% \]
% tested with the multiplier $\sum_{i=0}^{k}\nu_i u^{n+1-i}$.
% The relevant unit-circle conditions are
% \[
% \operatorname{Re}\bigl\{M(z;\boldsymbol{A}^{(k)})
% \,\overline{M(z;\boldsymbol{\nu})}\bigr\}\ge 0,
% \qquad
% \operatorname{Re}\bigl\{M(z;\boldsymbol{B}^{(k)})
% \,\overline{M(z;\boldsymbol{\nu})}\bigr\}\ge 0,
% \quad |z|=1.
% \]
% The first follows from \eqref{eq:FP-D1} after multiplication by the nonnegative factor $ |z-1|^2 $, and the second follows directly from \eqref{eq:FP-D2}.
% Consequently, any scheme-multiplier pair satisfying \eqref{FP} provides the algebraic conditions for $L^2$-stability of homogeneous linear parabolic problems. Thus the multipliers displayed in \Cref{prop:bdf6-feasible,prop:wsbdf7-feasible,prop:lmm8-feasible} can also be used for the corresponding $L^2$-stability arguments.
% \end{remark}

\section{Numerical experiments}\label{sec:numerics}

This section presents two numerical experiments to verify the temporal accuracy and modified energy dissipation of the IMEX-LMMs studied in \cref{sec:feasibility}. All computations use a Fourier pseudo-spectral discretization on a $256\times256$ mesh \cite{shen2011spectral}.

\subsection{Temporal convergence test}

To verify the temporal convergence order, we use the manufactured solution
\[
u(x,y,t)
=
0.15\,\mathrm{e}^{-t}\Bigl[\cos x \cos y+\frac12\cos(2x)\Bigr],
\]
for the PFC equation on $(0,2\pi)^2$ with $\varepsilon=0.025$. An appropriate source term is added, using the implicit coefficient vector $\boldsymbol{B}^{(k)}$, so that $u$ is the exact solution. The starting values $u^0, \ldots, u^{k-1}$ are taken from the exact solution to isolate the time discretization error. We integrate to the final time $T=1$ with time steps $\tau=T/N$,  where $N$ is taken from $\{20,25,32,40,50,64\}$ so that $\tau$ has a terminating decimal representation. The error is measured by the discrete $L^2$ norm
\[
e_{L^2}
=\biggl(h_xh_y\sum_{i,j}\left|u_\tau(x_i,y_j,T)-u(x_i,y_j,T)\right|^2\biggr)^{1/2},
\]
and the convergence rate is computed from two consecutive time steps.
\Cref{tab:conv-678} shows that the observed rates agree with the expected orders of 6, 7, and 8.

\begin{table}[htb!]
\centering
\setlength{\extrarowheight}{2pt}
\begin{tabular}{c|cc|cc|cc}
\toprule
\multirow{2}{*}{$\tau$} & \multicolumn{2}{c|}{IMEX-BDF6} & \multicolumn{2}{c|}{IMEX-WSBDF7} & \multicolumn{2}{c}{IMEX-LMM8} \\
\cline{2-7}
& $e_{L^2}$ & rate & $e_{L^2}$ & rate & $e_{L^2}$ & rate \\
\midrule
1/20 & 1.8557e-8 & -- & 8.7777e-9 & -- & 9.6313e-10 & -- \\
1/25 & 4.9686e-9 & 5.91 & 1.9096e-9 & 6.84 & 1.3546e-10 & 8.79 \\
1/32 & 1.1393e-9 & 5.97 & 3.4501e-10 & 6.93 & 1.8681e-11 & 8.03 \\
1/40 & 2.9967e-10 & 5.98 & 7.2118e-11 & 7.01 & 3.3021e-12 & 7.77 \\
1/50 & 7.8649e-11 & 5.99 & 1.5045e-11 & 7.02 & 5.8202e-13 & 7.78 \\
1/64 & 1.7892e-11 & 6.00 & 2.6558e-12 & 7.03 & 6.9336e-14 & 8.62 \\
\bottomrule
\end{tabular}
\caption{$L^2$ errors and convergence rates at $T=1$ ($\varepsilon = 0.025$).}
\label{tab:conv-678}
\end{table}

\subsection{Modified energy dissipation and crystal growth}

We consider the PFC equation without a source term in the periodic domain $(0,32)^2$ with $\varepsilon=0.25$. The initial condition is given by
\[
\begin{aligned}
    &\,u^0(x,y)
    = 
    0.07
    -0.02 \cos \Bigl(\frac{2\pi(x-12)}{32}\Bigr)\sin\Bigl(\frac{2\pi(y-1)}{32}\Bigr) \\
    &\qquad +0.02\cos\Bigl(\frac{2\pi(x+10)}{32}\Bigr)\cos\Bigl(\frac{2\pi(y+3)}{32}\Bigr)
    -0.01\sin^2\Bigl(\frac{2\pi x}{32}\Bigr)\sin^2\Bigl(\frac{4\pi(y-6)}{32}\Bigr).
\end{aligned}
\]
The standard PFC free energy is
\[
E_{\mathrm{PFC}}[u] = \int_\Omega \kbraB{\frac12 |(1+\Delta)u|^2 + \frac14(u^2-\varepsilon)^2}\,\mathrm{d}\boldsymbol{x}.
\]
In the abstract framework, we use the shifted splitting
\[
\mathcal L=(I+\Delta)^2+\varepsilon I,\qquad
f(u)=u^3-2\varepsilon u,\qquad
F(u)=\frac14\bigl(u^2-2\varepsilon\bigr)^2.
\]
The associated energy reads
\[
E[u]=E_{\mathrm{PFC}}[u]+\frac{3\varepsilon^2}{4}|\Omega|,
\]
which differs from $E_{\mathrm{PFC}}$ only by a constant and does not affect energy dissipation. For this test, the constant is $C_0 = 48$. For comparison with the standard PFC energy, we show the shifted modified energy $E_G^n-C_0$. This subtraction of $C_0$ does not affect dissipation and places the modified energy on the same scale as $E_{\mathrm{PFC}}$.

We take $\tau=0.001$ and integrate the three schemes to $T=150$. The starting values $u^0,\ldots,u^{k-1}$ are generated by the four-stage eighth-order Gauss--Legendre Runge--Kutta method \cite{book-solving-ODE2}, and the resulting fully implicit nonlinear stage equations are solved by fixed-point iteration with tolerance $10^{-12}$. As a reference curve, we also compute the standard PFC energy $E_{\mathrm{PFC}}^n$ by IMEX-LMM8 with step size $\tau_{\mathrm{ref}}=10^{-4}$.
For the cubic nonlinearity in this test, we use the artificial truncation bound $R_\ast=2$. The computed solutions are checked a posteriori to satisfy
\[
\max_{k=6,7,8}\max_{0\le n\le N}\|u_k^n\|_{L^\infty}
\le 0.664 < R_\ast ,
\]
so that the truncation is inactive along the numerical trajectories. The corresponding Lipschitz constant used in the modified energy is
\[
\ell_f=\max_{|s|\le R_\ast}|3s^2-2\varepsilon|
      =\max\{2\varepsilon,\,|3R_\ast^2-2\varepsilon|\}
      =\frac{23}{2},
\]
which is used for all schemes when evaluating $E_G^n$.

\Cref{fig:energy-crystal-compact}(a) shows that the shifted modified energies $E_G^n-C_0$ decay for all three schemes and remain close to the reference PFC energy curve. \Cref{fig:energy-crystal-compact}(b) shows representative snapshots computed by IMEX-LMM8. The solution evolves from short-wavelength stripes to a coarser lamellar structure, consistent with the energy decay observed in \cref{fig:energy-crystal-compact}(a).

\begin{figure}[htbp]
    \centering
    \begin{subfigure}[b]{0.35\textwidth}
        \centering
        \includegraphics[width=\textwidth]{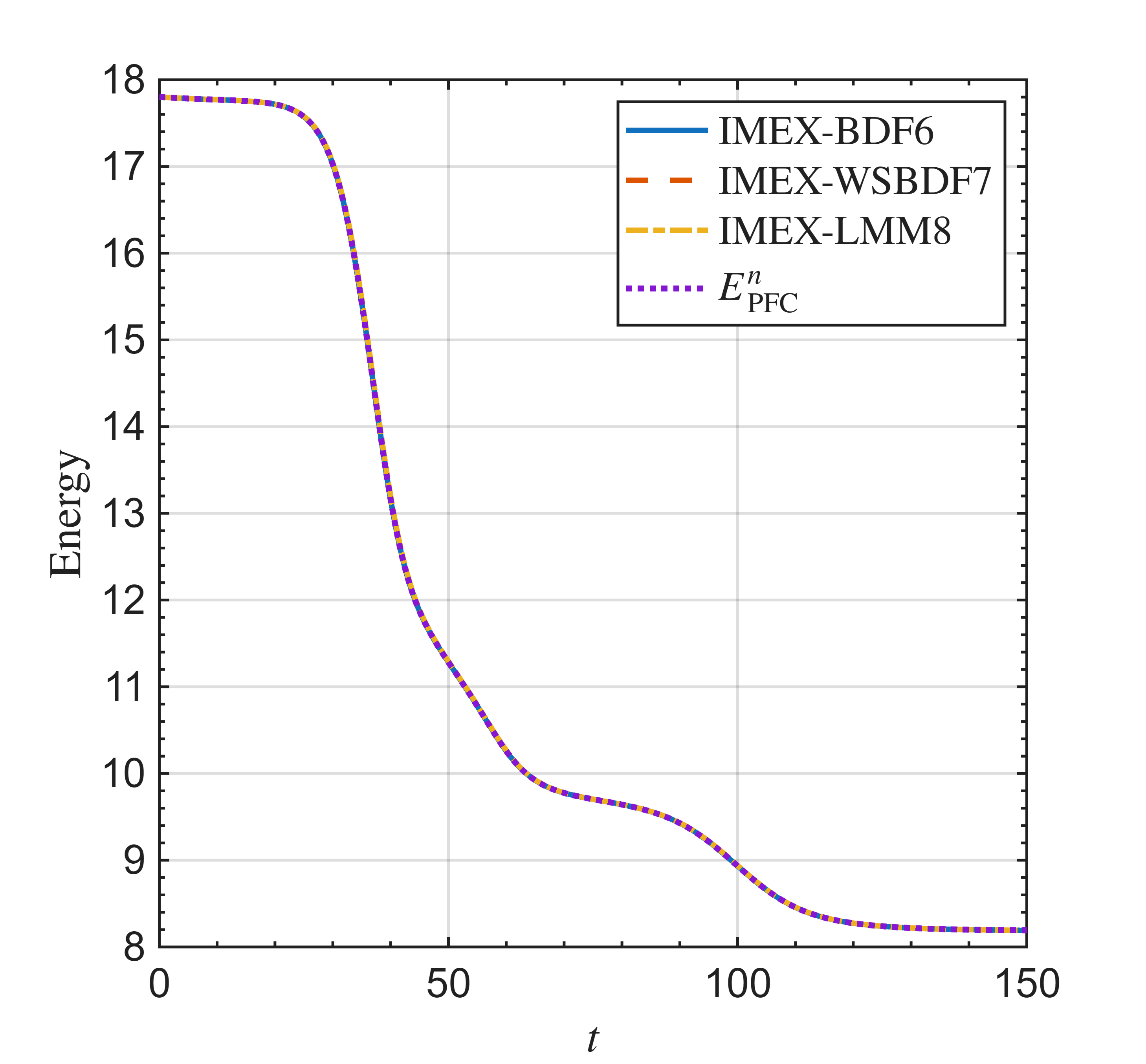}
        \caption{Shifted modified energy.}
        % \label{fig:energy-678-compact}
    \end{subfigure}
    \hfill
    \begin{subfigure}[b]{0.64\textwidth}
        \centering
        \includegraphics[width=\textwidth]{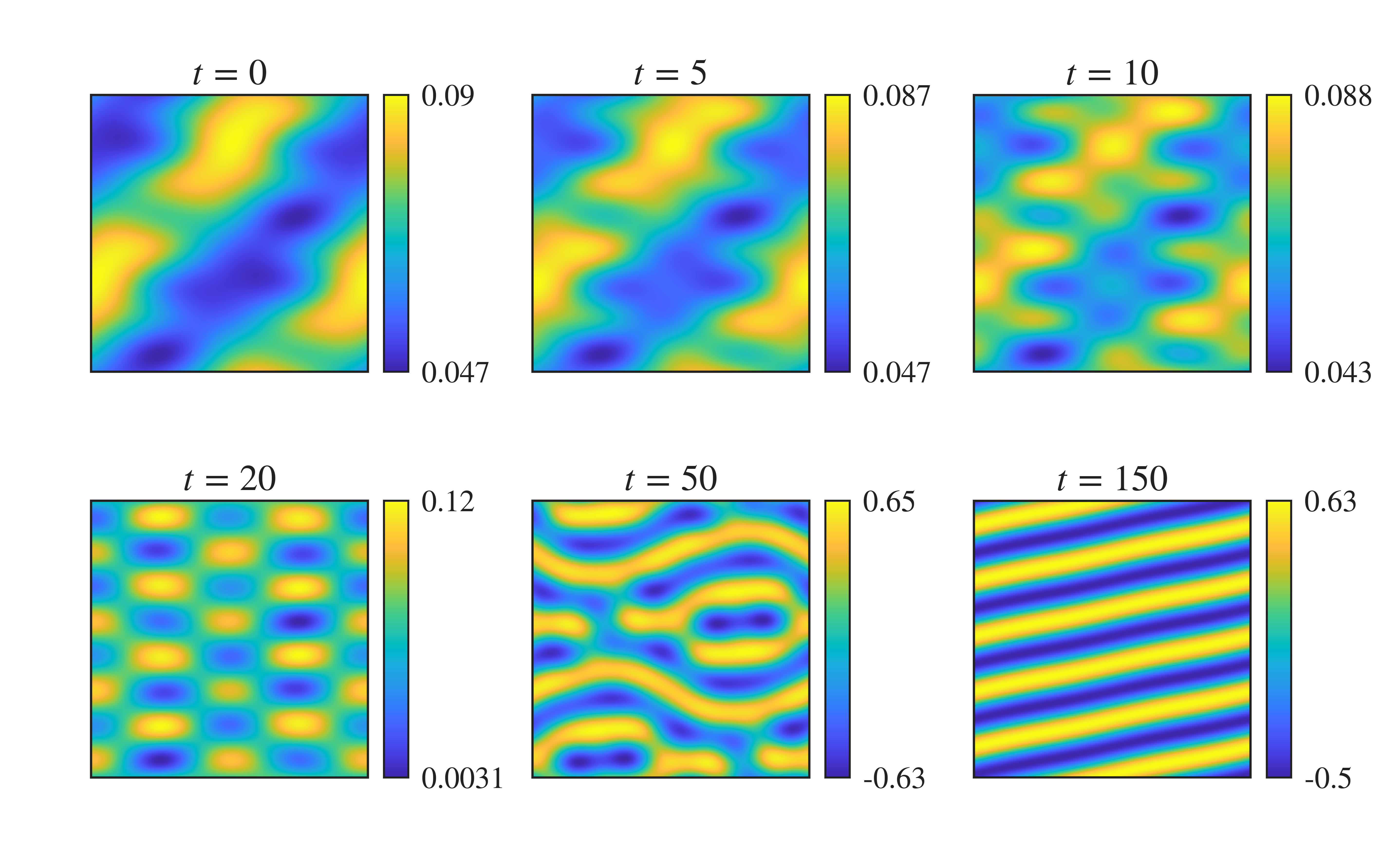}
        \caption{Crystal-growth snapshots.}
        % \label{fig:crystal-lmm8-compact}
    \end{subfigure}
    \caption{(a) Shifted modified energies $E_G^n-C_0$ of the three schemes ($\tau=0.001$), together with the standard PFC energy computed by IMEX-LMM8 ($\tau_{\mathrm{ref}}=10^{-4}$). (b) Crystal-growth snapshots produced by IMEX-LMM8 ($\tau=0.001$). The color scale is adjusted independently in each snapshot.}
    \label{fig:energy-crystal-compact}
\end{figure}

Although the cubic nonlinearity in the PFC equation is not globally Lipschitz, the global-in-time arguments in \cite{WangZhaoLiao-2025-SINUM, global-in-time-LiQiaoWangZheng} provide a possible way to establish uniform $L^\infty$-norm bounds for numerical solutions and consequently to remove the global Lipschitz assumption without introducing auxiliary truncation bounds.

\section{Conclusion}\label{sec:conclusion}

This work developed a unified framework to establish the energy dissipation of high-order IMEX-LMMs for gradient flows, based on general multipliers in the form of linear combinations of first-order differences. The classical Dahlquist's theory is generalized, which helps to ensure the energy dissipation under mild time-step restrictions through degree, positivity, Schur stability, and coprimality conditions for the associated generating polynomials. Moreover, the constructed modified energy is consistent with the original energy as the time step $\tau\to 0$.

These conditions together with the order conditions of IMEX-LMMs form a feasibility problem over the coefficients of the scheme and the multiplier. It can be solved numerically (based on the LP solver) to find suitable multipliers for the IMEX-BDF6 and IMEX-WSBDF7, and to construct new  energy-dissipative IMEX-LMMs such as IMEX-LMM8. 
Such analysis can also be used in $L^2$- and $H^1$-stability of LMMs for linear parabolic equations as stated in \cref{rem:linear-parabolic}.

Several questions remain to be explored in future work. First, the current analysis assumes that the nonlinearity is globally Lipschitz continuous. The case of locally Lipschitz continuous nonlinearity, such as the cubic term in the PFC equation, may be handled by combining our framework with global-in-time energy estimates \cite{WangZhaoLiao-2025-SINUM, global-in-time-LiQiaoWangZheng}.
Second, numerically speaking, we can not find a ninth-order energy-dissipative IMEX-LMM, whose existence is still unclear as the feasibility problem is nonlinear.
Finally, how this framework can be used to variable-step IMEX-BDF schemes \cite{ChenWangYanZhang-2019-SINUM, LiaoTangZhou-2020-SINUM} is still unknown.

\bibliographystyle{siamplain}
\bibliography{References}

\end{document}